\documentclass[oneside,british]{amsart}
\usepackage[T1]{fontenc}
\usepackage[latin9]{inputenc}
\usepackage{geometry}
\usepackage{babel}
\usepackage{textcomp}
\usepackage{amsbsy}
\usepackage{amstext}
\usepackage{amsthm}
\usepackage{amssymb}
\usepackage{graphicx}
\usepackage{makecell}
\usepackage[unicode=true,pdfusetitle,
 bookmarks=true,bookmarksnumbered=false,bookmarksopen=false,
 breaklinks=false,pdfborder={0 0 1},backref=false,colorlinks=false]
 {hyperref}

\makeatletter

\providecommand{\tabularnewline}{\\}

\numberwithin{equation}{section}
\numberwithin{figure}{section}
\theoremstyle{plain}
\newtheorem*{conjecture*}{\protect\conjecturename}
\theoremstyle{plain}
\newtheorem{thm}{\protect\theoremname}
\theoremstyle{definition}
\newtheorem{defn}[thm]{\protect\definitionname}
\theoremstyle{remark}
\newtheorem{rem}[thm]{\protect\remarkname}
\theoremstyle{definition}
\newtheorem{example}[thm]{\protect\examplename}
\theoremstyle{plain}
\newtheorem{prop}[thm]{\protect\propositionname}
\theoremstyle{plain}
\newtheorem{conjecture}[thm]{\protect\conjecturename}
\theoremstyle{plain}
\newtheorem{lem}[thm]{\protect\lemmaname}

\usepackage{babel}
\numberwithin{thm}{section}
\usepackage{subcaption}
\usepackage{lipsum}

\theoremstyle{definition} 
\theoremstyle{definition} 
\theoremstyle{definition}

\DeclareMathOperator{\HH}{\mathrm{H}}
\DeclareMathOperator{\tr}{\mathrm{tr}}
\DeclareMathOperator{\hyp}{\mathrm{hyp}}
\DeclareMathOperator{\ext}{\mathrm{ext}}

\providecommand{\conjecturename}{Conjecture}
\providecommand{\definitionname}{Definition}
\providecommand{\examplename}{Example}
\providecommand{\propositionname}{Proposition}
\providecommand{\remarkname}{Remark}
\providecommand{\theoremname}{Theorem}
\providecommand{\lemmaname}{Lemma}

\makeatother

\providecommand{\conjecturename}{Conjecture}
\providecommand{\definitionname}{Definition}
\providecommand{\examplename}{Example}
\providecommand{\lemmaname}{Lemma}
\providecommand{\propositionname}{Proposition}
\providecommand{\remarkname}{Remark}
\providecommand{\theoremname}{Theorem}

\title{Minimal Surfaces, Knots, and Neural Networks}

\author{Tancredi Schettini Gherardini\texorpdfstring{$^1$}{}}
\thanks{\texorpdfstring{$^1$}{}University of Bonn and Max Planck Institute for Mathematics, Bonn, Germany. E-mail: \href{mailto:tsg@math.uni-bonn.de}{tsg@math.uni-bonn.de}}

\author{Marco Usula\texorpdfstring{$^2$}{}}
\thanks{\texorpdfstring{$^2$}{}Mathematical Institute, University of Oxford, Oxford, United Kingdom. E-mail: \href{mailto:marco.usula@maths.ox.ac.uk}{marco.usula@maths.ox.ac.uk}}

\begin{document}
\setcounter{totalnumber}{1} 

\begin{abstract}
A recent conjecture by Joel Fine posits a relationship between the
coefficients of the HOMFLY polynomial of a knot $K$ in the 3-sphere
$S^{3}$, and the signed count of minimal surfaces in hyperbolic 4-space
$\HH^{4}$ meeting the sphere at infinity at $K$, with prescribed
genus and self-intersection number. In this paper, we develop a novel
machine learning framework based on Physics-Informed Neural Networks
(PINNs) to numerically solve the minimal surface equation in hyperbolic space.
We utilise this framework to test Fine's Conjecture by constructing
near-minimal discs bounding various families of knots in $S^{3}$.
Furthermore, we develop an algorithmic method to find self-intersections
and compute their sign. For every knot analysed, the computationally
discovered minimal discs and their self-intersection numbers
align with the predictions of Fine's Conjecture, providing empirical
evidence for it. 
\end{abstract}

\maketitle
\tableofcontents{}

\section{Introduction}

The asymptotic Plateau problem is a classical, fundamental
topic in geometric analysis. It revolves around the following
question: given a closed submanifold $M^{k}\subset S^{n}$ of the $n$-dimensional
sphere, does there exist a $k+1$-dimensional, properly immersed \emph{minimal} submanifold of
hyperbolic space $\HH^{n+1}$ asymptotic to $M$ at infinity?
A foundational breakthrough in this area was achieved by Anderson
\cite{AndersonPlateau}, who provided the following general existence result: \emph{for any closed submanifold
$M\subset S^{n}$, there exists a complete, absolutely area-minimising
submanifold of $\HH^{n+1}$ asymptotic to $M$} (cf. Theorem \ref{thm:anderson} for a more precise statement).

Not very much is known about the local and global structure of the
space of solutions of the asymptotic Plateau problem. This problem
is of special interest in the case of minimal \emph{surfaces} properly
immersed in \emph{$4$-dimensional} hyperbolic space. Indeed,
there is a fascinating interplay between the study of these surfaces,
and the study of \emph{knots} in the $3$-sphere $S^{3}$. To explain
this connection, consider a knot $K\subset S^{3}$ and denote
by $\mathcal{M}\left(K\right)$ the set of properly immersed minimal
surfaces in $\HH^{4}$ asymptotic to $K$ at infinity. A priori, we only
know that $\mathcal{M}\left(K\right)$ is non-empty, by Anderson's
Theorem. How does the global structure of $\mathcal{M}\left(K\right)$
reflect the properties of the knot $K$? This is the main subject of a recent paper by Fine \cite{FineKnots}, where he lays the foundation for a systematic
study of $\mathcal{M}\left(K\right)$ and its relationship with the knot
at infinity.

Let us briefly describe Fine's main results. Let $\mathcal{M}_{g,d}\left(K\right)\subset\mathcal{M}\left(K\right)$
be the subspace of properly immersed minimal surfaces in $\HH^4$ asymptotic to $K$ at infinity, with genus
$g\in\mathbb{N}$ and self-intersection number (the signed count of
transverse double points) $d\in\mathbb{Z}$. Fine proved that, for
a \emph{generic} knot $K$, the space $\mathcal{M}_{g,d}\left(K\right)$
is an \emph{oriented $0$-dimensional manifold}; that is, a discrete
set of points, each representing a minimal surface carrying a $\pm1$
sign. Furthermore, for $g=0$ (the case of minimal discs), he proved
that $\mathcal{M}_{0,d}\left(K\right)$ is \emph{compact}. Consequently,
it consists of a \emph{finite} number of surfaces, which can then
be counted with sign. One of the major results in \cite{FineKnots}
is that this signed count is an \emph{invariant} of $K$: it does
not change if we replace $K$ by another knot $K'$, smoothly isotopic
to $K$.

In \cite{FineKnots}, Fine conjectures that the moduli spaces $\mathcal{M}_{g,d}\left(K\right)$
for $g>0$ should be compact as well, for $K$ generic; moreover,
he conjectures that the count with sign $\#\mathcal{M}_{g,d}\left(K\right)$
should be closely related to the coefficients of a fundamental polynomial
invariant of knots, namely the \emph{HOMFLY polynomial}. This knot
invariant, which we shall denote by $P\left(K\right)$, is a $\mathbb{Z}$-valued
linear combination of monomials of the form $z^{2h}a^{2k}$, where $a, z$ are formal variables,
$k\in\mathbb{Z}$, and $h\in\mathbb{N}$. The precise form of Fine's
Conjecture is the following: 
\begin{conjecture*}
For a generic knot $K\subset S^{3}$, the signed count $\#\mathcal{M}_{g,d}\left(K\right)$
corresponds to the integer coefficient of $z^{2g}a^{2\left(d+g\right)}$
in the HOMFLY polynomial $P\left(K\right)$. 
\end{conjecture*}
Proving Fine's Conjecture would be a major breakthrough in the understanding of minimal surfaces in hyperbolic $4$-space. Indeed, if the conjecture were true, the fact that the coefficient of $z^{2g} a^{2\left(g + d \right)}$ in $P\left(K\right)$ is non-zero would imply that (up to possibly slightly deforming the knot) \emph{there exists a properly immersed minimal surface in $\HH^4$, with genus $g$ and self-intersection number $d$, asymptotic to $K$ at infinity}. Note that this information goes well beyond the
existence result proved by Anderson, which does not give any information
about the genus and the self-intersection number of the absolutely
area-minimising solution. We remark that, while the coefficients of the HOMFLY polynomial are relatively easy to compute\footnote{There exist algorithms which can compute $P\left(K\right)$ from a
knot diagram of $K$.}, it is a priori very difficult to gain information on $\mathcal{M}_{g,d}\left(K\right)$; indeed, the minimal surface
equation is a complicated second-order non-linear partial differential equation, and at present we lack a general method to solve it explicitly.

The aim of this paper is to \emph{test and provide evidence for Fine's Conjecture by computationally constructing approximate minimal surfaces in $\HH^{4}$ predicted by the conjecture}. Specifically, we develop a machine learning framework designed
to:

\begin{enumerate}
  \item find approximately minimal \emph{discs} properly immersed in $\HH^{4}$, asymptotic to a prescribed knot at infinity;
  \item find the points of self-intersection of these solutions, and compute the self-intersection number.
\end{enumerate}

Let us explain more precisely how we test Fine's Conjecture. Consider a knot $K\subset S^{3}$,
and suppose that the coefficient $a^{2d}$ in the HOMFLY polynomial
$P\left(K\right)$ is non-zero. Fine's Conjecture then predicts the
existence of a minimal disc in $\HH^{4}$ asymptotic to $K$, with self-intersection
number $d$. We are able to provide evidence for
these existence predictions several times,
by finding various explicit near-minimal discs in $\HH^{4}$ bounding
the following knots: the \emph{unknot}; the \emph{torus knots} $T\left(3,2\right)$,
$T\left(5,2\right)$, $T\left(4,3\right)$, $T\left(5,3\right)$;
two solutions bounding the \emph{Figure-eight} knot; the \emph{Stevedore}
knot and its mirror image; the \emph{Three-twist} knot and its mirror
image; and the \emph{square} knot. For each of these near-minimal discs, we compute the self-intersection number and we verify its consistence with Fine's Conjecture.

As an example, consider the $T\left(3,2\right)$ torus knot, also
known as the \emph{right-handed trefoil}. Its HOMFLY polynomial is
\[
P\left(T\left(3,2\right)\right)=2a^{2}-a^{4}+a^{2}z^{2}.
\]
Using our method, we find a near-minimal disc in $\HH^{4}$
bounding $T\left(3,2\right)$ with self-intersection number $1$.
This solution is consistent with Fine's Conjecture, as the existence of such a solution is predicted by the term $2a^{2}$ in $P\left(T\left(3,2\right)\right)$.

Our machine learning framework relies on a powerful new technique
to solve partial differential equations numerically using \emph{neural
networks}; the models based on this technique are called \emph{Physics-Informed
Neural Networks}, or PINNs \cite{raissi2017physicsinformeddeeplearning}.
The idea is simple; to exemplify it, we consider a simple boundary
value problem: finding a harmonic function $f:\Omega\to\mathbb{R}$
on a smooth domain $\Omega\subset\mathbb{R}^{n}$, with boundary value
$f_{|\partial\Omega}=\gamma$. The PINN approach consists of looking
for an approximate solution on a space
$\left\{ \hat{f}_{\theta}:\theta\in\Theta\right\} $ of neural
networks $\hat{f}_{\theta}:\mathbb{R}^{n}\to\mathbb{R}$ (cf. \S\ref{subsec:PINNs})
with fixed architecture; here $\theta$ is the vector of learnable
parameters of the neural network. The training phase consists
in learning values of $\theta$ for which the corresponding neural
network $\hat{f}_{\theta}$ minimises, in the class $\left\{ \hat{f}_{\theta}:\theta\in\Theta\right\} $,
a \emph{loss function} of the form 
\[
\mathcal{L}\left(\hat{f}_{\theta};\mathcal{D}_{\Omega^{\circ}},\mathcal{D}_{\partial\Omega}\right)=\overbrace{\sum_{p\in\mathcal{D}_{\Omega^{\circ}}}\left|\Delta\hat{f}_{\theta}\right|^{2}\left(p\right)}^{\text{interior loss}}+\overbrace{\sum_{q\in\mathcal{D}_{\partial\Omega}}\left|\hat{f}_{\theta}\left(q\right)-\gamma\left(q\right)\right|^{2}}^{\text{boundary loss}};
\]
here $\mathcal{D}_{\Omega^{\circ}}\subset\Omega^{\circ}$ and $\mathcal{D}_{\partial\Omega}\subset\partial\Omega$
are samples of points in the interior and on the boundary of the domain, respectively.
This paradigm is theoretically justified by the \emph{Universal Approximation
Theorem} proved in \cite{HORNIK1989359}; this result says that every
continuous function from a compact subset $\Omega\subset\mathbb{R}^{n}$
to $\mathbb{R}^{m}$ can be approximated arbitrarily well (in the
$C^{0}$ topology) by (the restriction to $\Omega$ of) a neural network $\mathbb{R}^{n}\to\mathbb{R}^{m}$.


In our specific setting, we are looking for minimal immersions $u:D^{2}\to\overline{\HH}^{4}$
restricting on the boundary to a prescribed knot parametrisation $\gamma:\partial D^{2}\to S^{3}$.
A standard PINN approach would attempt to solve this problem by training a
neural network $D^{2}\to\overline{\HH}^{4}$ to minimise a loss function
involving an interior term and a boundary term\footnote{Note that PINNs have been recently deployed across a broad range of problems in geometric analysis and differential geometry, including minimal and embedded surfaces, such as (but not limited to): \cite{DebSanghavi2025HolographicPINN, ZhouYe2023MinimalSurfacePINN, fang2021physicsinformedneuralnetworkframework, Hashimoto2025PINNMinimal, Ashmore:2019wzb, Douglas:2020hpv, Anderson:2020hux, Jejjala:2020wcc, Larfors:2021pbb, Hirst2025AInstein, cortes2026machinelearningapproachnirenberg, deluca2024machinelearninggravitycompactifications,HalversonRuehle2023MetricFlows,Aggarwal_2024,hirst2026minimisingwillmoreenergyneural}.}. Our framework, instead,
incorporates the neural network as an internal, learnable component
of a composite geometric model, mathematically designed to automatically
produce \emph{asymptotically minimal} discs solving the boundary condition
\emph{exactly}. These design choices allow us to deal with a simpler
interior-only loss. This step proves to be crucial, as it drastically
improves the convergence of the training algorithm and the accuracy
of the near-solutions. The near-minimal surfaces presented in this paper are available (as parameters of the trained model) at \href{https://github.com/Tancredi-Schettini-Gherardini/deep_plateau.git}{this GitHub repository}, alongside the code used to produce them and investigate their properties.

The paper is organised as follows. In \S\ref{sec:minimal-surfaces},
we introduce the class of minimal submanifolds of hyperbolic space
considered in this paper, and we present Fine's Conjecture. In \S\ref{sec:machine-learning},
we describe our machine learning framework in detail, from the design
of the model to our computational strategy to compute the self-intersection
number. In \S\ref{sec:Results},
we present various examples of solutions found using our framework,
and we explain how these examples provide evidence for Fine's Conjecture.
Finally, in \S\ref{sec:Future_developments} we discuss possible
future improvements to the present framework.

\subsubsection*{Acknowledgements}

The authors wish to thank Joel Fine, for explaining various aspects
of his work and for providing continuous feedback and encouragement
on this project. The first-named author thanks Javier Gomez-Serrano for his suggestions on future directions, and would also like to acknowledge the support of the 2024 Max Planck-Humboldt Research Award bestowed on Geordie Williamson and Catharina Stroppel by the Max Planck Society and the Alexander von Humboldt Foundation. The second-named author is grateful to the Wiener--Anspach
Foundation for their support.

\section{\label{sec:minimal-surfaces}Minimal surfaces in hyperbolic 4-space
filling a knot at infinity}

In this section, we develop the geometric background of the paper.
After recalling the notion of minimal submanifold, we discuss minimal
submanifolds of hyperbolic space, and then we explain Fine's Conjecture
relating minimal surfaces in $\HH^{4}$ to knot invariants.

\subsection{\label{subsec:Minimal-submanifolds}Minimal submanifolds}

We begin by recalling the variational definition of minimal submanifold.
For simplicity, in this subsection we only consider \emph{closed}\footnote{A closed manifold is a compact manifold without boundary.}\emph{
immersed submanifolds of manifolds without boundary}.

Fix once and for all an ``ambient'' Riemannian manifold $\left(M^{n},g\right)$
without boundary. 
\begin{defn}
Let $N^{k}$ be a closed manifold. 
\begin{enumerate}
\item An \emph{immersion} of $N$ in $M$ is a smooth map $u:N\to M$ such
that, for every point $p\in N$, the differential $du_{p}:T_{p}N\to T_{u\left(p\right)}M$
is injective. 
\item An \emph{embedding} of $N$ in $M$ is an injective immersion\footnote{The notion of embedding is more subtle when the manifolds involved
are not compact.} $u:N\to M$. 
\item An \emph{immersed submanifold} of $M$ is the image of an immersion
$u:N\to M$. 
\item An \emph{embedded submanifold} of $M$ is the image of an embedding
$u:N\to M$. 
\end{enumerate}
\end{defn}

\begin{rem}
We remark that our immersed submanifolds are allowed to self-intersect. 
\end{rem}

\begin{example}
The standard parametrisation of the unit circle in the plane $\mathbb{R}^{2}$
\begin{align*}
S^{1} & \to\mathbb{R}^{2}\\
e^{i\theta} & \mapsto\left(\cos\theta,\sin\theta\right)
\end{align*}
is an embedding, while the immersion 
\begin{align*}
S^{1} & \to\mathbb{R}^{2}\\
e^{i\theta} & \mapsto\left(\sin\theta,\frac{1}{2}\sin2\theta\right)
\end{align*}
parametrises a figure-eight immersed submanifold in $\mathbb{R}^{2}$,
self-intersecting at a point (cf. Figure \ref{fig:circle_figure8}). 
\end{example}

\begin{figure}[t!]
\centering \includegraphics[width=0.8\textwidth]{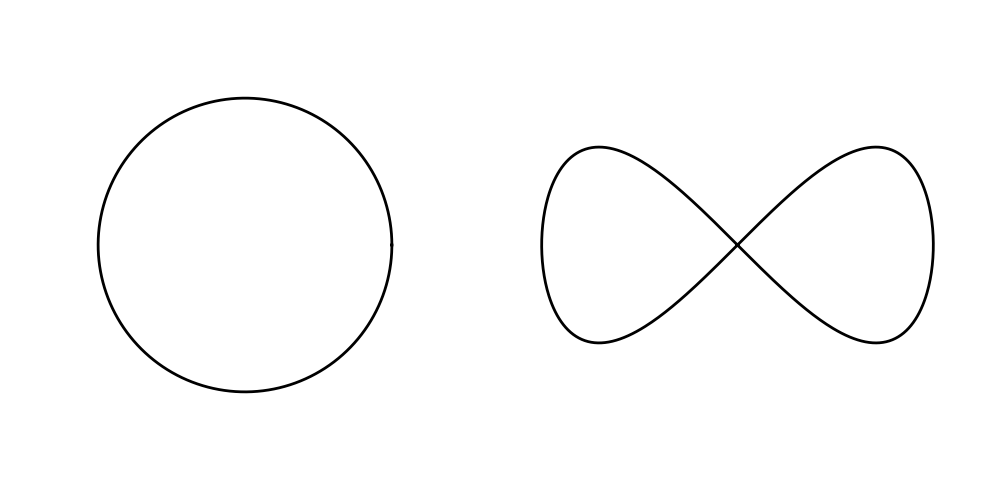}
\caption{An embedded circle, and an immersed figure-8.}
\label{fig:circle_figure8} 
\end{figure}

Given an immersion $u:N\to M$, we can parametrise the space of smooth
maps $N\to M$ sufficiently close\footnote{The space $C^{\infty}\left(N,M\right)$ is equipped with the $C^{\infty}$
topology.} to $u$ in terms of \emph{vector fields on $N$ along $u$}. More
precisely, a vector field on $N$ along $u$ is a section of the pull-back
tangent bundle $u^{*}TM$. One should think of a section $V\in C^{\infty}\left(N;u^{*}TM\right)$
as a vector field on $M$ defined along the image $u\left(N\right)$;
for every point $q\in u\left(N\right)$ of self-intersection, there
is a vector for each ``strand'' of $u\left(N\right)$ passing through
$q$, according to the multiplicity of $q$. Given such a vector field
$V$, one can displace $u\left(N\right)$ along the unit-time flow
of $V$ and obtain a new smooth map $\tilde{u}:N\to M$. If $V$ is
small enough, $\tilde{u}:N\to M$ is still an immersion, and moreover
every smooth immersion $N\to M$ in a sufficiently small neighbourhood
of $u$ in $C^{\infty}\left(N,M\right)$ can be uniquely obtained
in this way.

Let us fix an immersion $u:N\to M$. We can then pull the metric $g$
back to a metric $u^{*}g$ on $N$ defined as follows: for every $p\in N$
and every pair of vectors $v_{1},v_{2}\in T_{p}N$, we define 
\[
\left(u^{*}g\right)_{p}\left(v_{1},v_{2}\right):=g_{u\left(p\right)}\left(du_{p}\left(v_{1}\right),du_{p}\left(v_{2}\right)\right).
\]
Now that we have a metric on $N$, we can compute its \emph{$k$-dimensional
volume}: this is the integral of the Riemannian density of the pull-back
metric, 
\[
\mathcal{V}^{k}\left(u\right):=\int_{N}\omega_{u^{*}g}.
\]
In fact, the $k$-dimensional volume is invariant under reparametrisations,
i.e.~compositions $u\circ\varphi$ where $\varphi:N\to N$ is a diffeomorphism.
Thus, the $k$-dimensional volume only depends on the immersed submanifold
$u\left(N\right)$. 
\begin{defn}
We say that $u$ is a \emph{minimal immersion} if it is a critical
point of the functional above. The image $u\left(N\right)$ is then
called a \emph{minimal immersed submanifold}. 
\end{defn}

\begin{rem}
Since the $k$-volume is invariant under reparametrisations of the
domain, the property of being minimal depends only on the immersed
submanifold. 
\end{rem}

The variational definition of minimal immersion can be replaced by
an equivalent partial differential equation (PDE); we will use this
point of view in the rest of the paper. 
\begin{defn}
Given an immersion $u:N\to M$, the \emph{tension field} of $u$ is
the vector field $\tau\left(u\right)\in C^{\infty}\left(N;u^{*}TM\right)$
defined as 
\[
\tau\left(u\right):=\tr_{u^{*}g}\nabla du.
\]
\end{defn}

\begin{rem}
Let us clarify the definition of $\tau\left(u\right)$. The differential
$du$ is a smooth bundle map $TN\to TM$ covering $u$. We can equivalently
think of it as a smooth section of the vector bundle $T^{*}N\otimes u^{*}TM$,
i.e. a \emph{$1$-form on $N$ with values in $u^{*}TM$}. Now, the
bundle $u^{*}TM$ has a natural connection, i.e.~the pull-back of
the Levi-Civita connection on $TM$ associated to $g$; moreover,
since $u$ is an immersion, the bundle $T^{*}N$ has a natural connection
as well, namely the dual Levi-Civita connection of $u^{*}g$. This
defines a natural connection $\nabla$ on the tensor product $T^{*}N\otimes u^{*}TM$.
The total covariant derivative of $du$ is then a section of $T^{*}N\otimes T^{*}N\otimes u^{*}TM$;
taking the trace of the first two slots with respect to the dual metric
$u^{*}g$, we obtain a section of $u^{*}TM$. 
\end{rem}

\begin{rem}
In general, if $u:\left(N,G\right)\to\left(M,g\right)$ is a smooth
map between Riemannian manifolds, one can define the tension field
of $u$ with respect to $G$ and $g$ as $\tau\left(u\right)=\tr_{G}\nabla du$,
where $\nabla$ is now the connection on $T^{*}N\otimes u^{*}TM$
induced by the Levi-Civita connections of $G$ and $g$. Solutions
of the equation $\tau\left(u\right)=0$ are called \emph{harmonic maps}. In
the definition above, $u$ is required to be an immersion and $G$
is required to be the pull-back of $g$. In other words, \emph{an
immersion is minimal if and only if it is harmonic with respect to the pull-back metric}.
In the literature, the vector field $\tr_{u^{*}g}\nabla du$ is more
often referred to as the \emph{mean curvature} vector field of $u$. 
\end{rem}

From a variational point of view, $\tau\left(u\right)$ represents
the negative gradient of the $k$-volume functional $\mathcal{V}^{k}$,
i.e. the direction of steepest descent for \emph{$\mathcal{V}^{k}$}.
This justifies the following well-known result: 
\begin{prop}
An immersion $u:N\to\left(M,g\right)$ is minimal if and only if $\tau\left(u\right)=0$. 
\end{prop}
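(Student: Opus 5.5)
The plan is to compute the first variation of $\mathcal{V}^{k}$ along an arbitrary one-parameter family of maps and show that it equals $-\int_{N}g(\tau(u),V)\,\omega_{u^{*}g}$. Here $V\in C^{\infty}(N;u^{*}TM)$ is the variation field.

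Fix $V$ and let $u_{t}$ be the family obtained by displacing $u$ along $V$, as described above. Then $u_{0}=u$ and $\partial_{t}u_{t}|_{t=0}=V$; for small $t$ each $u_{t}$ is still an immersion. Write $G_{t}=u_{t}^{*}g$. The standard formula for the derivative of a Riemannian density gives
\[
\frac{d}{dt}\Big|_{t=0}\omega_{G_{t}}=\tfrac{1}{2}\tr_{G_{0}}\big(\dot G\big)\,\omega_{G_{0}} .
\]
Since the pulled-back connection on $u_{t}^{*}TM$ is metric and torsion-free (in the sense that $\nabla_{\partial_t}du_t(X)=\nabla_X \partial_t u_t$ for coordinate fields), we get
\[
\dot G(X,Y)=g(\nabla_{X}V,du(Y))+g(du(X),\nabla_{Y}V),
\]
and hence $\tfrac12\tr_{G_{0}}\dot G=\sum_{i}g(\nabla_{e_{i}}V,du(e_{i}))$ for a local $G_{0}$-orthonormal frame $e_{i}$.

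Next, I would introduce the $1$-form $\alpha(X):=g(V,du(X))$ on $N$. Its divergence with respect to $G_{0}$ is
\[
\operatorname{div}\alpha=\sum_{i}\Big(g(\nabla_{e_{i}}V,du(e_{i}))+g\big(V,(\nabla_{e_{i}}du)(e_{i})\big)\Big)=\tfrac12\tr_{G_{0}}\dot G+g(V,\tau(u)),
\]
using the product rule for the connection on $T^{*}N\otimes u^{*}TM$ described in the preceding remark. Since $N$ is closed, the divergence theorem gives $\int_{N}\operatorname{div}\alpha\,\omega_{G_{0}}=0$. Therefore
\[
\frac{d}{dt}\Big|_{t=0}\mathcal{V}^{k}(u_{t})=-\int_{N}g(\tau(u),V)\,\omega_{u^{*}g}.
\]

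Both directions follow from this formula. If $\tau(u)=0$, the derivative vanishes for every $V$, so $u$ is critical. Conversely, if $u$ is critical, I would choose $V=\phi\,\tau(u)$ with $\phi\geq0$ a bump function; this is a legitimate variation because small sections generate nearby immersions. The integrand is then $\phi|\tau(u)|^{2}\geq0$, and a vanishing integral forces $\tau(u)=0$ everywhere.

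The main obstacle is the identity $\tfrac12\tr\dot G=\operatorname{div}\alpha-g(V,\tau)$. The delicate point is checking that the connection on $T^{*}N$ (the Levi-Civita connection of $G_{0}$) enters correctly, so that the trace of $\nabla du$ really appears. I would verify this in normal coordinates for $G_{0}$ at a point, where the Christoffel symbols on $N$ vanish and the computation reduces to the product rule.
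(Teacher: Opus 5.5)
Your argument is correct and matches the paper's own justification: the paper gives no formal proof, only the remark that $\tau(u)$ is the negative gradient of $\mathcal{V}^{k}$, and your first variation formula $\frac{d}{dt}\big|_{t=0}\mathcal{V}^{k}(u_t)=-\int_N g(\tau(u),V)\,\omega_{u^{*}g}$ is exactly that remark made rigorous. One small fix in the converse: since $N$ is closed you can simply take $V=\tau(u)$, whereas a single bump function $\phi$ only forces $\tau(u)=0$ on $\{\phi>0\}$.
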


This point of view emphasises the fact that \emph{the minimality equation
is a quasi-linear second order elliptic equation}. This means that
the linearisation of the map $v\mapsto\tau\left(v\right)$ at an immersion
$u:N\to M$ is a second-order, elliptic partial differential operator
$J_{u}$ (in fact, a Laplace-type geometric operator) acting on sections
of $u^{*}TM$.

\subsection{\label{subsec:Conformally-compact-manifolds}Conformally compact
manifolds and minimal p-submanifolds}

The ultimate goal of this paper is to find examples of minimal submanifolds
of the hyperbolic space via machine learning. Our framework is designed to search for these solutions in a special class of
submanifolds, called \emph{p-submanifolds}. To motivate this choice,
in this short subsection we introduce the notions of \emph{conformally
compact manifold}, and of \emph{p-submanifold} of such manifolds.
Conformally compact geometry is a very active area of geometric analysis
and mathematical physics \cite{BiquardAdS}, pioneered by Fefferman--Graham
\cite{FeffermanGrahamAmbient,FeffermanGrahamConformal}, Graham--Lee
\cite{GrahamLeeEinstein}, Mazzeo--Melrose \cite{MazzeoMelroseResolvent,MazzeoPhD}
and many others. We refer to these references for many more details
on the geometry, and to \S2 of \cite{UsulaBiharmonic} for a shorter
introduction, adopting the same language used in this paper.

Let $M^{m+1}$ be a compact manifold \emph{with boundary}. 
\begin{defn}
\label{def:bdf}A \emph{boundary defining function} on $M$ is a smooth
function $\rho:M\to[0,+\infty)$ such that the zero locus $\rho=0$
is precisely the boundary $\partial M$, and moreover the differential
$d\rho$ is nowhere zero along $\partial M$. 
\end{defn}

\begin{defn}
\label{def:conformally-compact}A \emph{conformally compact metric}
on $M$ is a metric $g$ defined on the \emph{interior} $M^{\circ}$
of $M$, such that for some (hence every) boundary defining function
$\rho$ on $M$, the conformal rescaling $\rho^{2}g$ extends smoothly
to a metric on $M$. The pair $\left(M,g\right)$ is called a \emph{conformally
compact manifold}. 
\end{defn}

The prototypical example of conformally compact manifold is the hyperbolic
space. Recall that, in any dimension $n\ge2$, the \emph{hyperbolic
space} is the unique complete, simply connected Riemannian manifold
with constant sectional curvature equal to $-1$. We can describe
it explicitly as the metric 
\[
g_{\mathrm{hyp}}=\frac{4\left|d\boldsymbol{y}\right|^{2}}{\left(1-\left|\boldsymbol{y}\right|^{2}\right)^{2}}
\]
on the interior of the unit ball $B^{n}\subset\mathbb{R}^{n}$, where
$\boldsymbol{y}=\left(y^{1},...,y^{n}\right)$ are the coordinates
on $\mathbb{R}^{n}$. Observe that the function $\rho=1-\left|\boldsymbol{y}\right|^{2}$
is a boundary defining function for $B^{n}$, and $\rho^{2}g_{\mathrm{hyp}}=4\left|d\boldsymbol{y}\right|^{2}$
clearly extends to a metric on the whole of $B^{n}$. In other words,
$g_{\mathrm{hyp}}$ is a conformally compact metric on $B^{n}$. We
denote by $\HH^{n}$ the open unit ball equipped with the metric $g_{\mathrm{hyp}}$,
and by $\overline{\HH}^{n}$ its compactification. The boundary $\partial\overline{\HH}^{n}$
is the sphere $S^{n-1}$, and it is usually called the \emph{sphere
at infinity}: indeed, if $p\in\HH^{n}$ and $\gamma:[0,+\infty)\to\HH^{n}$
is a smooth curve such that $\lim_{t\to+\infty}\gamma\left(t\right)\in S^{n-1}$,
then hyperbolic length of $\gamma$ is necessarily infinite. A similar
statement holds for every conformally compact manifold.

The following concept was introduced in \cite{UsulaBiharmonic}: 
\begin{defn}
\label{def:simple-b-map}Let $N,M$ be compact manifolds with boundary.
A \emph{simple $b$-map} is a smooth map $u:N\to M$ which maps the
boundary $\partial N$ to the boundary $\partial M$, the interior
$N^{\circ}$ to the interior $M^{\circ}$, and is transverse to $\partial M$. 
\end{defn}

\begin{rem}
By definition, a simple $b$-map $u:N\to M$ restricts to a map between
boundaries, which we denote by $u_{\partial}:\partial N\to\partial M$.
Similarly, we denote by $u^{\circ}:N^{\circ}\to M^{\circ}$ the restriction
to the interiors. 
\end{rem}

We can now formulate the class of immersions and submanifolds considered
in this paper. 
\begin{defn}
Let $N,M$ be compact manifolds with boundary. 
\begin{enumerate}
\item A \emph{p-immersion} $u:N\to M$ is a simple $b$-map which is also
an immersion. 
\item A \emph{p-embedding} is an injective p-immersion. 
\item An \emph{immersed p-submanifold} of $M$ is the image of a p-immersion
$u:N\to M$. 
\item An \emph{embedded p-submanifold} of $M$ is the image of a p-embedding
$u:N\to M$. 
\end{enumerate}
\end{defn}

\begin{rem}
The letter p stands for ``product'', and reflects the fact that
if $S\subset M$ is an embedded p-submanifold, then $M$ is locally
diffeomorphic near $S$ to a neighbourhood of the zero locus of $NS$,
the normal bundle of $S$ in $M$. Since p-immersions are local p-embeddings,
the same statement holds locally for immersed p-submanifolds. These
considerations and this notation are due to Richard Melrose \cite{MelroseCorners}. 
\end{rem}

The class of p-submanifolds is natural in conformally compact geometry,
as the next proposition shows: 
\begin{prop}
\label{prop:pull-back-of-conf-compact-via-p-immersion-is-conf-compact}(Proposition
12 of \cite{UsulaIsometricEmbeddings}) Let $N,M$ be compact manifolds
with boundary, and let $u:N\to M$ be a p-immersion. If $g$ is a
conformally compact metric on $M$, then the pull-back $\left(u^{\circ}\right)^{*}g$,
a priori only defined only in the interior $N^{\circ}$, extends to a conformally
compact metric on $N$. 
\end{prop}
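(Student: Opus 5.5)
Fix a boundary defining function $\rho$ on $M$, so that $\bar g:=\rho^{2}g$ extends to a smooth metric on all of $M$. The plan is to show that $\sigma:=u^{*}\rho=\rho\circ u$ is a boundary defining function on $N$. Once this is known, the proof is immediate. In the interior,
\[
\sigma^{2}\left(u^{\circ}\right)^{*}g=\left(u^{\circ}\right)^{*}\left(\rho^{2}g\right)=u^{*}\bar g\big|_{N^{\circ}}.
\]
Here $u^{*}\bar g$ is a smooth symmetric $2$-tensor on all of $N$, because $u$ and $\bar g$ are smooth up to the boundary. It is positive definite everywhere, because $u$ is an immersion on the whole of $N$ (boundary included) and $\bar g$ is a genuine metric on $M$. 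Hence $\sigma^{2}\left(u^{\circ}\right)^{*}g$ extends smoothly to a metric on $N$, which is exactly Definition \ref{def:conformally-compact}. That definition says ``for some (hence every)'' boundary defining function, so producing one good $\sigma$ is enough.

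It remains to check that $\sigma$ is a boundary defining function in the sense of Definition \ref{def:bdf}. Smoothness and nonnegativity are clear. For the zero locus, $u$ is a simple $b$-map, so it sends $\partial N$ into $\partial M$ and $N^{\circ}$ into $M^{\circ}$. Therefore $\sigma=0$ exactly on $u^{-1}(\partial M)=\partial N$.

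The remaining point, which I expect to be the only real one, is that $d\sigma\neq0$ along $\partial N$. This is where the transversality in Definition \ref{def:simple-b-map} is used. Take $p\in\partial N$. Transversality of $u$ to $\partial M$ gives
\[
du_{p}(T_{p}N)+T_{u(p)}\partial M=T_{u(p)}M.
\]
Since $d\rho_{u(p)}$ vanishes on $T_{u(p)}\partial M$ and is nonzero (as $\rho$ is a boundary defining function), it cannot vanish on $du_{p}(T_{p}N)$. Hence $d\sigma_{p}=d\rho_{u(p)}\circ du_{p}\neq0$.

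Combining the three properties, $\sigma$ is a boundary defining function on $N$, and by the computation above $\left(u^{\circ}\right)^{*}g$ extends to a conformally compact metric on $N$. No further obstacle is expected, since the statement only asserts conformal compactness and not any asymptotic curvature properties.
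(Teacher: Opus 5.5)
Your proof is correct and complete. It is essentially the standard argument behind the cited result (the paper does not reprove it, deferring to Usula): showing that $u^{*}\rho$ is a boundary defining function is exactly the characterisation of simple $b$-maps the paper quotes from \cite{UsulaBiharmonic}, and the identity $(u^{*}\rho)^{2}(u^{\circ})^{*}g=u^{*}(\rho^{2}g)$, together with injectivity of $du$ up to the boundary, then finishes it as you describe.
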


A conformally compact metric on a compact manifold with boundary $M$
can be equivalently and elegantly described as a smooth metric over
a vector bundle $^{0}TM$, a modified version of the usual tangent
bundle $TM$ designed precisely to de-singularise conformally compact
metrics at infinity. $^{0}TM$ is defined as the vector bundle over
$M$ whose space of smooth sections is the space of vector fields on
$M$ vanishing along the boundary (for details on this construction,
we refer to Mazzeo's PhD thesis \cite{MazzeoPhD}). As explained in detail in \S3.2 of \cite{UsulaBiharmonic}\footnote{The discussion in \S 3 of \cite{UsulaBiharmonic} involves \emph{harmonic
simple $b$-maps} between conformally compact manifolds; however,
minimal p-immersions are in particular harmonic simple b-maps, where
the metric in the domain is the pull-back metric. Since this pull-back
metric is conformally compact by Proposition \ref{prop:pull-back-of-conf-compact-via-p-immersion-is-conf-compact},
the results proved in \S 3 of \cite{UsulaBiharmonic} apply here
as well.}, if $u:N\to\left(M,g\right)$ is a p-immersion of $N$ into a conformally
compact manifold, then the tension field $\tau\left(u\right):=\tr_{u^{*}g}\nabla du$
can be interpreted as a smooth section of $u^{*}{^{0}TM}$, i.e. as
a smooth vector field on $M$ defined along $u\left(N\right)$ \emph{which
vanishes along the boundary}. Imitating the definition provided in
the previous subsection, we formulate the following 
\begin{defn}
A p-immersion (resp. p-embedding) $u:N\to\left(M,g\right)$ into a
conformally compact manifold is \emph{minimal} if $\tau\left(u\right)=0$.
A \emph{minimal immersed} (resp. \emph{embedded}) \emph{p-submanifold}
of $\left(M,g\right)$ is the image of a p-immersion (resp. p-embedding). 
\end{defn}

The minimality equation $\tau\left(u\right)=0$ for a p-immersion
into a conformally compact manifold is still a non-linear second-order
elliptic PDE, since the linearisation at $u$ of the map $v\mapsto\tau\left(v\right)$
(here $v$ ranges in the space of p-immersions $N\to M$), restricted
to the interior, is an elliptic operator $J_{u}$ of Laplace type
acting on sections of $\left(u^{\circ}\right)^{*}TM^{\circ}$. However,
this operator does \emph{not} extend to a genuinely elliptic operator
over the compactification $M$. Rather, $J_{u}$ is \emph{uniformly
degenerate} along the boundary. This aspect is not particularly important in
this paper, but it is crucial in \cite{FineKnots}. For more
details, we refer again to \cite{FineKnots} and to \S3 of \cite{UsulaBiharmonic}.

\subsection{\label{subsec:Minimal-p-submanifolds-of-Hn}Minimal p-submanifolds
of hyperbolic space}

In this paper, we will be mainly concerned with \emph{minimal
p-submanifolds of hyperbolic spaces}. In this subsection, we explain
\emph{why} we chose to work with this particular class of submanifolds.

It is easy to prove, using the maximum principle, that there cannot be
\emph{closed} minimal immersed submanifolds of $\HH^{n+1}$. Therefore,
mathematicians have focused on studying \emph{complete, properly
immersed} minimal submanifolds of $\HH^{n+1}$. A \emph{properly immersed
}minimal submanifold is the image of a proper minimal immersion $u:S\to\HH^{n+1}$,
where $S$ is an open manifold without boundary. The properness of
the map $u$ guarantees that the frontier of $u\left(S\right)$ in
the compactification $\overline{\HH}^{n+1}$ is entirely contained
in the sphere at infinity. Therefore, one can formulate an ideal boundary
value problem, known as the \emph{asymptotic} \emph{Plateau problem}:
given a set $Z\subset S^{n}$, is there a complete, properly immersed
minimal submanifold of $\HH^{n+1}$ whose limit set at infinity is
$Z$? We refer to \cite{CoskunuzerPlateau} for a survey on this problem.
A major achievement on this subject is the following celebrated \emph{existence
}result, due to Anderson: 
\begin{thm}
\label{thm:anderson}(Anderson, Theorem 3 of \cite{AndersonPlateau})
Let $M^{m-1}\subset S^{n}$ be a closed submanifold of the sphere
at infinity of $\HH^{n+1}$. Then there exists a complete, absolutely
area-minimising locally integral $m$-current $X$ in $\HH^{n+1}$,
asymptotic to $M$ at infinity. 
\end{thm}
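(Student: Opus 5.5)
The plan follows Anderson's strategy: exhaustion by compact Plateau problems, then a compactness argument in geometric measure theory, then a barrier argument at infinity.

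\emph{Step 1: solve a sequence of compact problems.} Work in the ball model $\HH^{n+1}$ and fix $M^{m-1}\subset S^{n}$. For $0<r<1$, let $M_{r}$ be the radial projection of $M$ onto the Euclidean sphere of radius $r$; this is a closed $(m-1)$-submanifold of the hyperbolic ball $B_{r}$. Since the hyperbolic metric on $\overline{B_{r}}$ is smooth and $M_{r}$ is null-homologous in $\overline{B_{r}}$ (a ball), the Federer--Fleming existence theorem gives an absolutely area-minimising integral current $X_{r}$ with $\partial X_{r}=[M_{r}]$.

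\emph{Step 2: local mass bounds and compactness.} On any fixed compact ball $B_{R}$, $R<1$, I will bound the mass of $X_{r}\llcorner B_{R}$ uniformly in $r>R$. The idea is to compare with a competitor: the cone over $M_{r}$ from the origin, whose hyperbolic mass inside $B_{R}$ is bounded independently of $r$. A cut-and-paste/slicing argument (slice $X_{r}$ by $\partial B_{R'}$ for generic $R'$ and replace the inside by the cone over the slice) then gives the local mass bound. With these uniform local bounds, Federer--Fleming compactness plus a diagonal argument yields a subsequence $X_{r_{k}}\to X$ in the flat topology on compacta. Lower semicontinuity of mass and the standard argument that limits of minimisers are minimisers show that $X$ is a complete, locally integral, absolutely area-minimising $m$-current with no boundary in $\HH^{n+1}$.

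\emph{Step 3: asymptotic boundary.} It remains to show that the limit set of $\operatorname{spt}X$ at infinity is exactly $M$. This is the step I expect to be the main obstacle, and I would attack it with barriers. For a point $p\in S^{n}\setminus M$, choose a small totally geodesic hyperplane $P$ whose boundary sphere at infinity separates $p$ from $M$. By the convex hull property (the maximum principle for minimal currents, applied to the distance function to $P$, which is subharmonic on minimal varieties), every $X_{r}$ lies on the side of $P$ containing $M_{r}$ for $r$ close to $1$. Hence $X$ avoids a neighbourhood of $p$ in $\overline{\HH}^{n+1}$.

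Conversely, every point of $M$ lies in the closure of $\operatorname{spt}X$. This follows because $\partial X_{r}=M_{r}\to M$, together with the monotonicity formula, which prevents mass from escaping: near points of $M$, the currents $X_{r}$ must carry definite local mass. Combining the two inclusions gives $\partial_{\infty}X=M$. The delicate point is making the barrier estimates uniform in $r$, so that they pass to the limit, and ensuring that $X\neq0$. The latter I would handle via the monotonicity lower bound at points near $M_{r}$.
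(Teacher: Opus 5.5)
The paper does not prove this statement; it only cites Anderson. Your scheme is the right one: exhaustion by compact Plateau problems, local mass bounds, compactness, and convex-hull barriers. The inclusion $\partial_\infty X\subset M$ via totally geodesic half-spaces is fine.

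\textbf{The gap} is the other half of Step 3: the claims that every point of $M$ lies in $\overline{\operatorname{spt}X}$ and that $X\neq 0$. Monotonicity at a point $x\in\operatorname{spt}X_r$ near $M_r$ only bounds from below the mass of $X_r$ in a hyperbolic ball of fixed radius about $x$. These balls sit at hyperbolic distance roughly equal to the radius of the sphere carrying $M_r$, which tends to infinity. Since $X_{r_k}\to X$ only on compact subsets of $\HH^{n+1}$, the limit retains none of this mass. Nothing in your argument forces $\operatorname{spt}X_r$ to meet a fixed compact subset of a Euclidean neighbourhood of $q\in M$. Compare flat discs spanning unit circles translated off to infinity in $\mathbb{R}^3$: they carry definite mass near the boundary, yet their limit on compacta is zero. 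So ``monotonicity prevents mass from escaping'' is not a valid step, and as written you have proved neither $M\subset\partial_\infty X$ nor $X\neq0$.

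\textbf{The fix} is control near infinity that is uniform in $r$, obtained by making Step 2 quantitative. Let $d$ be hyperbolic distance from the origin, $B(t)=\{d<t\}$, $t_r$ the hyperbolic radius of the sphere containing $M_r$, $C(M)$ the geodesic cone over $M$ from the origin, and $f(t)=\mathbf{M}(X_r\llcorner B(t))$.

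\emph{Local bound.} Replacing $X_r\llcorner B(t)$ by the cone over the slice gives $f(t)\le c_m(t)f'(t)$, with $c_m(t)=\int_0^t(\sinh s/\sinh t)^{m-1}\,ds$. On its own this is only a lower growth bound. It is also not a direct local comparison with the cone over $M_r$, since the two have different boundaries in $B(R)$. Integrate the inequality from $R$ to $t_r$ and combine it with the global comparison $f(t_r)\le\mathbf{M}(C(M)\llcorner B(t_r))$. This yields, for every $R<t_r$,
\[
f(R)\le\mathbf{M}(C(M)\llcorner B(R))=\operatorname{vol}(M)\int_0^R\sinh^{m-1}s\,ds .
\]

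\emph{Uniform decay near infinity.} In the ball model, the Euclidean $m$-dimensional density is $(1+\cosh d)^{-m}\le 2^m e^{-md}$ times the hyperbolic one. An integration by parts then shows that the Euclidean mass of $X_r$ in $\{d>R\}$ is $O(e^{-R})$, uniformly in $r$.

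\emph{Compactness up to the sphere.} The Euclidean masses of $X_r$ and of $\partial X_r=[M_r]$ are uniformly bounded. Federer--Fleming compactness in the closed Euclidean ball gives a flat limit $\bar X$ with $\|\bar X\|(S^n)=0$, $\bar X\llcorner\HH^{n+1}=X$, and, by continuity of $\partial$, $\partial\bar X=[M]$. Hence $M=\operatorname{spt}\partial\bar X\subset\operatorname{spt}\bar X=\overline{\operatorname{spt}X}$. This is the missing inclusion, and it also forces $X\neq0$.
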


While the regularity of Anderson's solution may be very weak, a recent
result proves that if the boundary at infinity is smoothly embedded,
and mild global regularity hypotheses are satisfied, then a solution
of the asymptotic Plateau problem must actually be \emph{polyhomogeneous} (a slight generalisation of the notion of smoothness), actually \emph{smooth}
if the domain is even-dimensional, and \emph{orthogonal to the boundary}: 
\begin{thm}
\label{thm:jared}(Marx-Kuo, corollary of Theorem 3.1 of \cite{MarxKuoRenormalized})
Let $X^{m}\subset\overline{\HH}^{n+1}$ be a $C^{m+1,\alpha}$ compact
immersed submanifold with boundary, such that $\partial X\subset S^{n}$
and $X^{\circ}\subset\HH^{n+1}$. Suppose furthermore that $X$ can
be written, near the boundary, as the graph of a function over a small
cylinder $\partial X\times[0,\varepsilon)$ embedded in a collar neighbourhood
$S^{n}\times[0,\varepsilon)$ of $S^{n}$ in $\overline{\HH}^{n+1}$.
If $X^{\circ}$ is minimal in $\HH^{n+1}$, and $\partial X$ is \emph{smoothly
embedded} in $S^{n}$, then $X$ is smooth in the interior, polyhomogeneous
up to the boundary, and intersects the boundary at infinity orthogonally.
Moreover, if $m$ is even, then $X$ is actually smooth up to the
boundary. 
\end{thm}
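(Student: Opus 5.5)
The plan is to work in a collar where the hyperbolic metric takes the form $g_{\hyp}=\rho^{-2}\bigl(d\rho^{2}+h_{\rho}\bigr)$, with $h_{\rho}$ a smooth family of metrics on $S^{n}$. This comes from a geodesic boundary defining function $\rho$ (for instance the upper half-space model near a boundary point), and in these coordinates the minimal surface equation for $X$ becomes a degenerate elliptic system of ``$0$-type'' in the sense of Mazzeo. By the graph hypothesis I would write $X$ near the boundary as $\{(\gamma(s)+w(s,\rho),\rho)\}$, where $\gamma$ parametrises $\partial X$, $s\in\partial X$, and $w$ is a normal section vanishing at $\rho=0$ with $C^{m+1,\alpha}$ regularity. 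Interior smoothness is standard: away from $\rho=0$ the equation is a uniformly elliptic quasilinear system for a $C^{2,\alpha}$ graph, so Schauder bootstrapping gives $C^{\infty}$.

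Near the boundary, I would multiply the mean curvature operator by $\rho^{2}$ and expand it in the $0$-vector fields $\rho\partial_{\rho}$ and $\rho\partial_{s}$. The linearisation at the cylinder $\partial X\times[0,\varepsilon)$ then has normal operator in which $\rho\partial_{\rho}$ acts on $w$ with indicial polynomial $I(\lambda)=\lambda(\lambda-(m))$ on each normal component. Its indicial roots are $0$ and $m$; this is the standard computation for $m$-dimensional minimal submanifolds, matching the Graham--Witten expansion. The first step is to show that the coefficient of $\rho^{1}$ in $w$ vanishes. Plugging $w=\rho w_{1}+o(\rho)$ into the equation, the leading term is $I(1)w_{1}=(1-m)w_{1}$ plus terms involving $\partial_{s}$ of lower order, which vanish. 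This forces $w_{1}=0$ when $m\ge 2$, and $w_{1}=0$ is exactly the statement that $X$ meets $S^{n}$ orthogonally.

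Next I would iterate formally. Writing $\rho^{2}H(w)=L w+Q(w)$, with $L$ having indicial operator $I(\rho\partial_{\rho})$, each coefficient $w_{j}$ for $j<m$ is determined by solving $I(j)w_{j}=F_{j}(w_{1},\dots,w_{j-1},\gamma)$, which is possible since $I(j)\neq0$. A parity argument, using that $h_{\rho}$ is even in $\rho$ for hyperbolic space, shows that only even powers appear below order $m$. At $j=m$, $I(m)=0$, so the obstruction $F_{m}$ must be absorbed by a term $\rho^{m}\log\rho\,\tilde w_{m}$, while the coefficient of $\rho^{m}$ itself is undetermined. When $m$ is even, $F_{m}$ is built only from odd-index coefficients, which vanish, so $F_{m}=0$, no logarithm appears, and the expansion is smooth. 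To upgrade the formal expansion to a genuine one, I would use the given $C^{m+1,\alpha}$ regularity as the starting point of a Mazzeo-style regularity argument: the $0$-elliptic parametrix and the b-calculus indicial analysis show that a solution lying in a weighted space with weight above the lower root has a polyhomogeneous expansion, and iterating gives full polyhomogeneity.

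I expect the main obstacle to be this upgrade from $C^{m+1,\alpha}$ to polyhomogeneity, since the nonlinearity has to be controlled in $0$-Hölder spaces. I would first try the tangential bootstrap: differentiate the equation in $\rho\partial_{s}$ and $\partial_{s}$ to get conormal regularity, then integrate the ODE in $\rho\partial_{\rho}$ term by term, so that the indicial roots produce exactly the expansion described above.
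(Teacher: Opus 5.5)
The paper gives no argument for this statement; it simply imports it from Marx-Kuo. Your architecture (formal expansion, a parity argument, then a $0$-regularity upgrade) is the standard one behind such results. However, the central indicial computation is off by one, and the parity step as you state it is false.

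\textbf{The indicial roots.} For the Euclidean graph function $w$ of an $m$-dimensional minimal submanifold, linearise the area $\int\rho^{-m}\sqrt{\det(\delta+\nabla w^{T}\nabla w)}\,d\rho\,ds$ at the cylinder. This gives the operator $\rho^{2}\partial_{\rho}^{2}w-m\rho\partial_{\rho}w+\rho^{2}\Delta_{s}w$. Its indicial polynomial is $\lambda(\lambda-m-1)$, so the roots are $0$ and $m+1$, not $0$ and $m$. This is where Graham--Witten's undetermined coefficient sits, at $r^{k+2}$ with $k=\dim\partial X=m-1$. For a check, the geodesic $x^{2}+(y-R)^{2}=R^{2}$ in $\{x>0\}$ is the graph $y=x^{2}/2R+O(x^{4})$, with its free coefficient at order $2=m+1$. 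With your roots, a geodesic's boundary slope $w_{1}$ would be a free parameter, which is why you had to exclude $m=1$.

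\textbf{Why this breaks the smoothness claim.} The correct parity fact is that $F_{j}$ vanishes for \emph{odd} $j$ once all lower odd-index coefficients vanish. For even $j$ it does not; for instance $F_{2}$ is a nonzero multiple of the mean curvature vector of $\partial X$. So in your scheme with $m=2$, the resonant equation reads $0\cdot w_{2}=F_{2}\neq0$ for any knot that is not a great circle. That would force a $\rho^{2}\log\rho$ term in every such minimal disc in $\HH^{4}$, the opposite of smoothness. Your claim that for even $m$ ``$F_{m}$ is built only from odd-index coefficients'' is false. With the correct roots the argument works, shifted by one. Odd coefficients vanish below order $m+1$. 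For $m$ even the resonant order $m+1$ is odd, so $F_{m+1}=0$, $w_{m+1}$ is free, and no logarithm ever appears. For $m$ odd, a $\rho^{m+1}\log\rho$ term can occur.

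\textbf{The orthogonality step.} Its conclusion is correct, but the justification needs repair. Since $w=O(\rho)$ is an $O(1)$ perturbation in the $0$-sense, the order-$\rho$ part of the equation is not linear in $w_{1}$. For example, the line $y=ax$ in $\{x>0\}$ has constant hyperbolic curvature $a/\sqrt{1+a^{2}}$. Note also that $I(1)=-m\neq0$ for all $m\ge1$, so no restriction to $m\ge2$ is needed. The clean argument is the conformal change formula. For $g=\rho^{-2}\bar g$ one has $H_{g}=\rho^{2}H_{\bar g}+m\rho(\bar\nabla\rho)^{\perp}$, hence $|H_{g}|_{g}=|\rho H_{\bar g}+m(\bar\nabla\rho)^{\perp}|_{\bar g}$. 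At $\rho=0$ this equals $m|(\bar\nabla\rho)^{\perp}|_{\bar g}$. Minimality and $C^{2}$ regularity up to the boundary then force $\bar\nabla\rho\in TX$ along $\partial X$, for every $m\ge1$.

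\textbf{The polyhomogeneity upgrade.} Your mechanism (tangential regularity in $0$-H\"older spaces, then integrating the indicial ODE in $\rho\partial_{\rho}$) is the right one, but it is only sketched. That is where the analytic content of the cited theorem lies, and it must be run with the corrected indicial roots.
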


This last result implies that, under reasonable assumptions, \emph{every
}complete properly embedded minimal submanifold of $\HH^{n+1}$ is
in fact the interior of an immersed p-submanifold of $\overline{\HH}^{n+1}$.
This justifies our choice to restrict to immersed p-submanifolds.

Another justification, not important for this paper but crucial for
\cite{FineKnots}, is that as discussed in \S\ref{subsec:Conformally-compact-manifolds}
the minimality equation for an immersed p-submanifold of $\overline{\HH}^{n+1}$
is a non-linear uniformly degenerate elliptic partial differential equation. The elliptic theory designed to study these
equations was developed by Mazzeo--Melrose \cite{MazzeoMelroseResolvent}
and Mazzeo \cite{MazzeoPhD}; this theory provides the analytic backbone of Fine's paper.

\subsection{\label{subsec:Fine's-conjecture}Knots and Fine's conjecture}

In this subsection we discuss the main motivation for this paper,
namely testing Fine's Conjecture. This conjecture posits a relation
between minimal p-immersed surfaces on the hyperbolic $4$-space,
and \emph{knot invariants} in the $3$-sphere $S^{3}$.

First of all, let's recall the formal notion of knot and link in $S^{3}$: 
\begin{defn}
A \emph{link} is the image of an embedding $S^{1}\sqcup\cdots\sqcup S^{1}\to S^{3}$.
A \emph{knot} is a one-component link, i.e. the image of an embedding
$S^{1}\to S^{3}$. 
\end{defn}

\begin{figure}[t]
\centering \begin{subfigure}[b]{0.32\textwidth} \centering \includegraphics[width=1\textwidth]{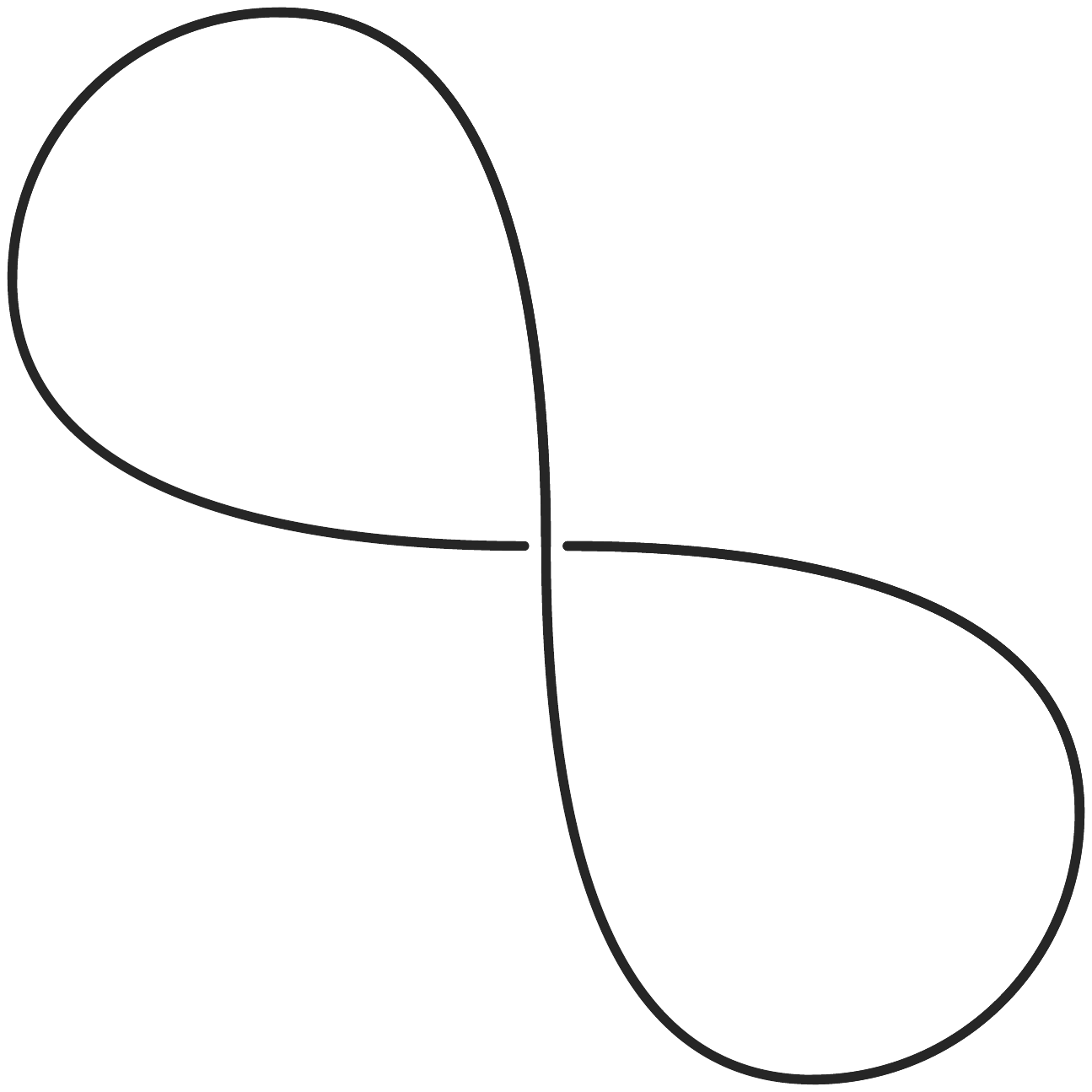}
\end{subfigure} \hfill{}\begin{subfigure}[b]{0.32\textwidth}
\centering \includegraphics[width=1\textwidth]{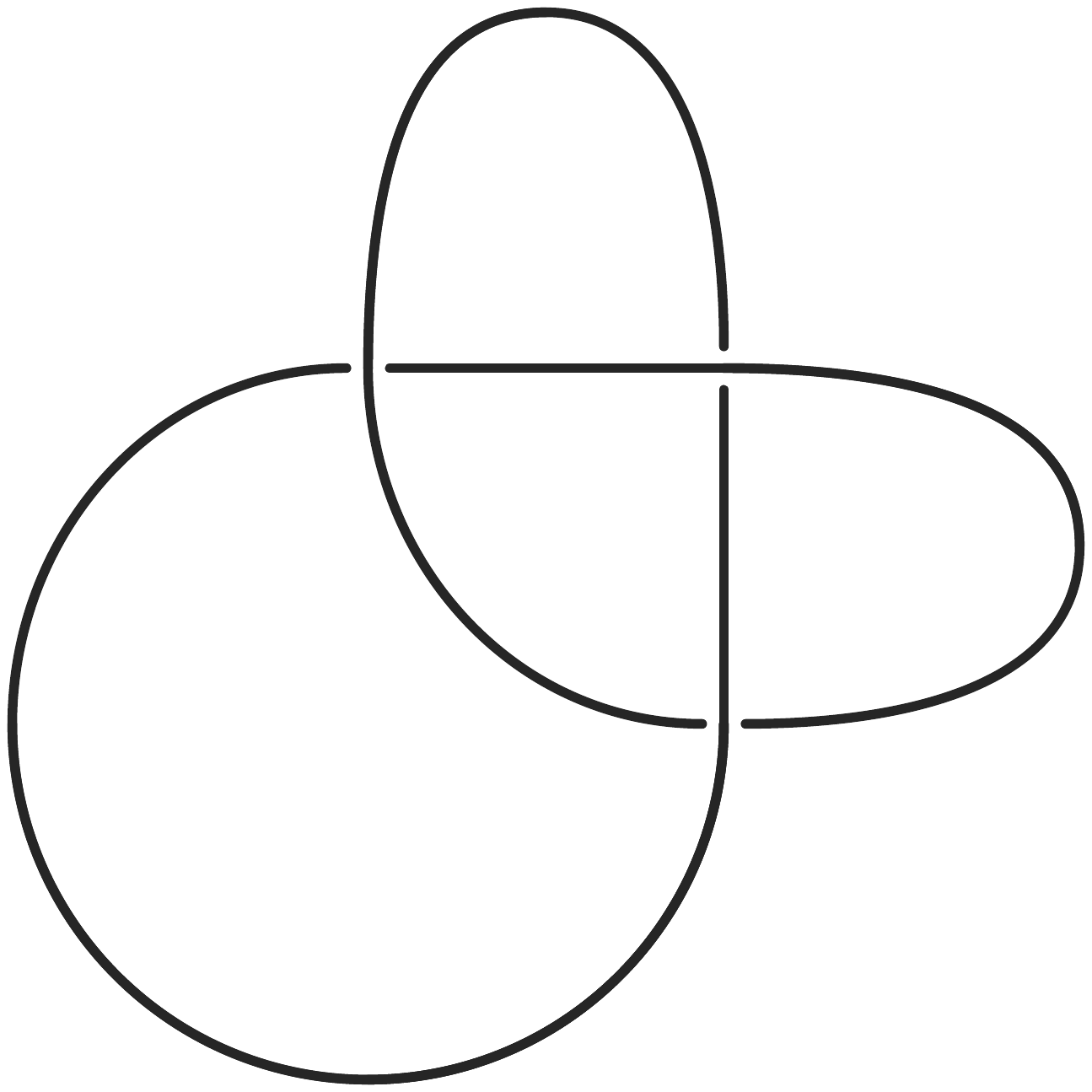}
\end{subfigure} \hfill{}\begin{subfigure}[b]{0.32\textwidth}
\centering \includegraphics[width=1\textwidth]{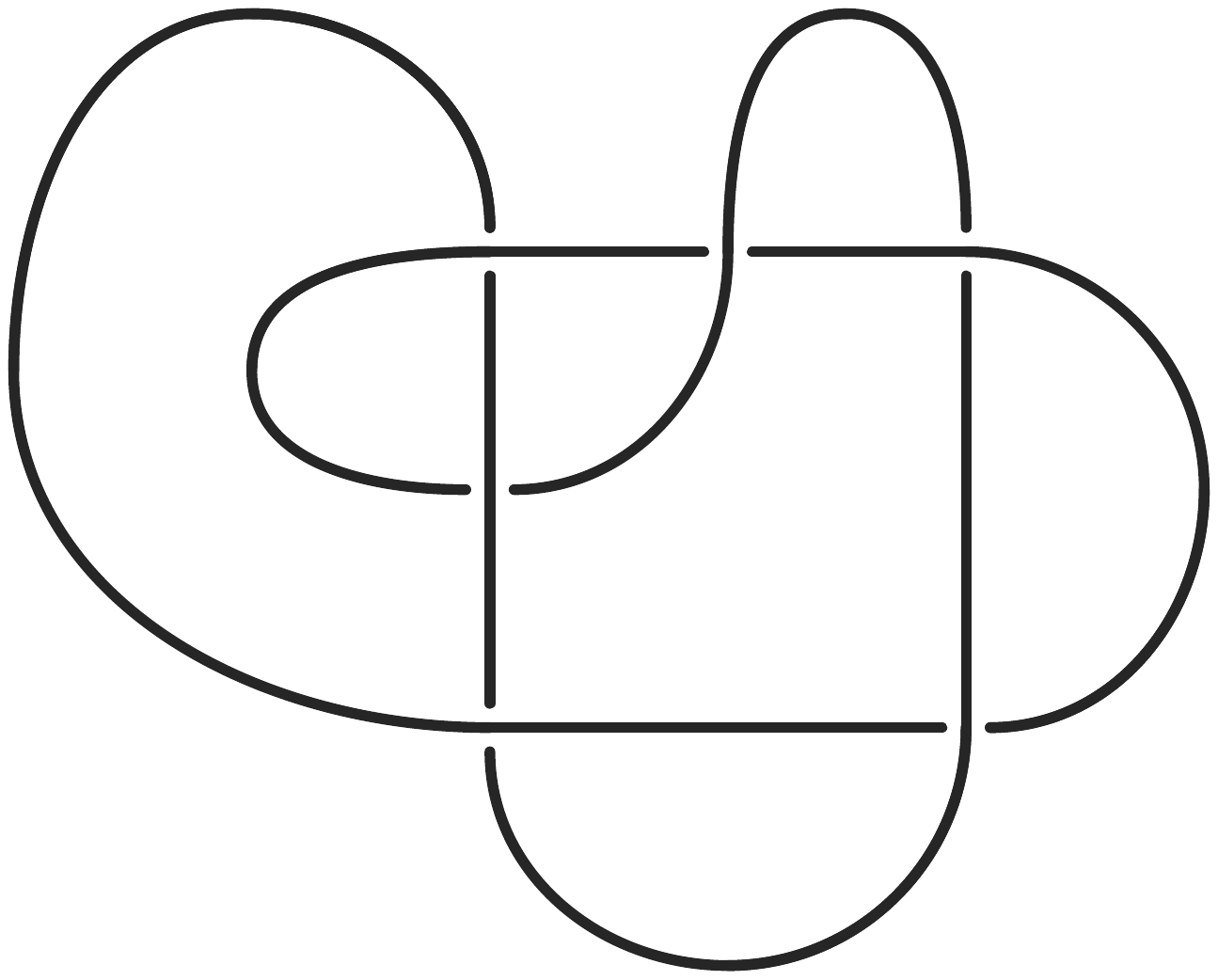}
\end{subfigure} \caption{Knot diagrams of an unknot, a trefoil knot, and a Stevedore knot.}
\label{fig:three_knots} 
\end{figure}

Some examples of knots are depicted in Figure \ref{fig:three_knots}.
We now introduce an equivalence relation on the space of links, called
\emph{isotopy}: 
\begin{defn}
Let $L_{0},L_{1}\subset S^{3}$ be two links in $S^{3}$. We say that
$L_{0}$ and $L_{1}$ are \emph{isotopic} (and we write $L_{0}\sim L_{1}$)
if there exists a smooth family $f_{t}:S^{1}\sqcup\cdots\sqcup S^{1}\to S^{3}$
of parametrised links, with $t \in \left[0,1\right]$, such that $f_{0}$ (resp. $f_{1}$) is a parametrisation
of $L_{0}$ (resp. $L_{1}$). If $L_0$ and $L_1$ are \emph{oriented}, we write $L_{0} \stackrel{\mathrm{or}}{\sim} L_{1}$ if there exists a smooth family $f_t$ as above, with $f_0$ and $f_1$ \emph{oriented} parametrisations of $L_0$ and $L_1$.
\end{defn}

The isotopy relation encodes the intuitive idea that two links are
equivalent if they can be deformed into each other in a smooth way,
without having to cut and paste them. A major problem in knot theory
consists in deciding whether two links are isotopic or not. Link \emph{invariants}
provide tools to distinguish them up to isotopy: 
\begin{defn}
A \emph{link invariant} is a map $P:\mathcal{L}\to\mathcal{X}$ from
the space $\mathcal{L}$ of links in $S^{3}$ to a set $\mathcal{X}$,
such that if $L_{1}\sim L_{2}$ then $P\left(L_{1}\right)=P\left(L_{2}\right)$.
Similarly, an \emph{oriented link invariant} is a map $P:\mathcal{L}_{\mathrm{or}}\to\mathcal{X}$ from the space of oriented links $\mathcal{L}_{\mathrm{or}}$,
such that if $L_{1}\stackrel{\mathrm{or}}{\sim}L_{2}$ then $P\left(L_{1}\right)=P\left(L_{2}\right)$ 
\end{defn}

A way to prove that two links $L_{1}$ and $L_{2}$ are \emph{not}
isotopic is to exhibit a knot invariant $P:\mathcal{L}\to\mathcal{X}$
for which $P\left(L_{1}\right)\not=P\left(L_{2}\right)$. For example,
a straightforward link invariant is the number of connected components.

Many important link invariants are \emph{polynomials}; among the most
important ones, we cite the \emph{Alexander polynomial}
(cf. \cite{AlexanderTopologicalInvariants}) and the \emph{Jones polynomial
}(cf. \cite{JonesPolynomialInvariant,JonesHecke}). We are especially
interested in a two-variable generalisation of both the Alexander
and the Jones polynomials, known as the \emph{HOMFLY polynomial} \cite{HOMFLY}.
This invariant can be described as a map $P:\mathcal{L}_{\mathrm{or}}\to\mathbb{Z}\left[a^{\pm1},z^{\pm1}\right]$
from the space of oriented links $\mathcal{L}_{\mathrm{or}}$ to the
space of Laurent polynomials in two variables $a,z$, with coefficients
in $\mathbb{Z}$. This invariant is usually defined axiomatically
in terms of \emph{link diagrams}\footnote{A link diagram of an oriented link $L\subset S^{3}$ is a 2-dimensional
projection of $L$ into $\mathbb{R}^{2}$ with only transverse self-intersections,
all with multiplicity $2$; moreover, at each double point, the diagram
specifies which strand goes ``over'' and which strand goes ``under'',
cf. Figure \ref{fig:three_knots}.}and \emph{Skein relations}: 
\begin{enumerate}
\item $P\left(\mathrm{unknot}\right)=1$; 
\item consider three oriented links $L_{+},L_{-},L_{0}$, represented by
three link diagrams which coincide everywhere except for a single
crossing, where $L_{+},L_{-},L_{0}$ differ as in Figure \ref{fig:skein}; then the HOMFLY polynomials of $L_{+},L_{-},L_{0}$ are related
by the equation 
\[
aP\left(L_{+}\right)-a^{-1}P\left(L_{-}\right)=zP\left(L_{0}\right).
\]

\end{enumerate}

\begin{figure}[t]
\centering \includegraphics[width=0.5\textwidth]{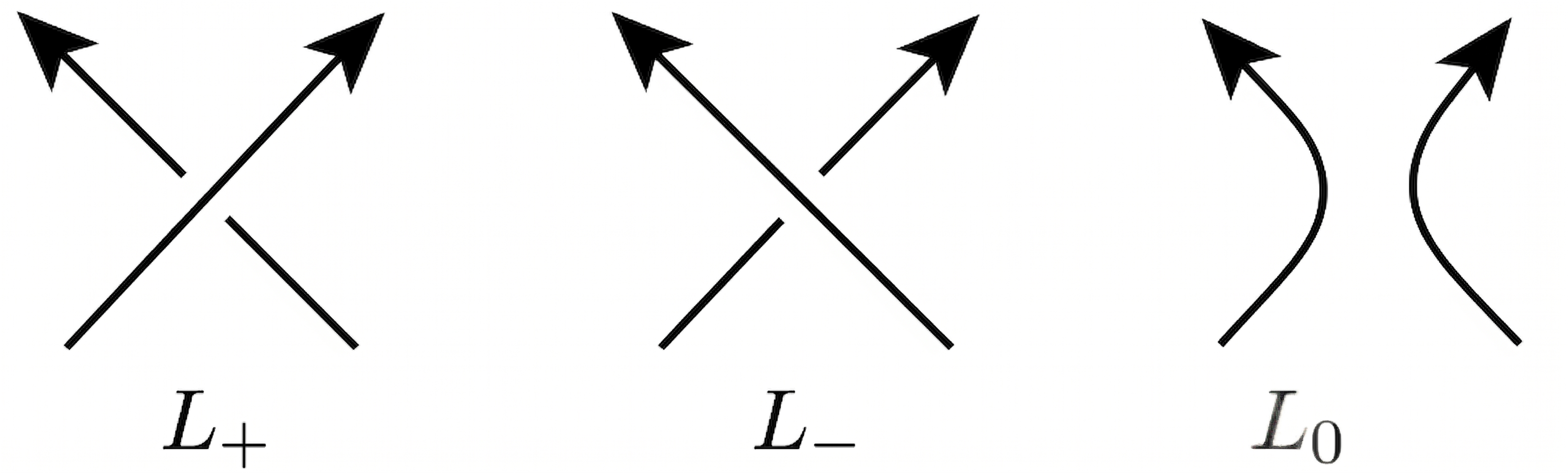}
\caption{Different crossing types in the Skein relations.}
\label{fig:skein} 
\end{figure}

One can prove that these two axioms indeed determine a well-defined
oriented link invariant. A complete discussion of the HOMFLY polynomial
goes beyond the scope of this paper, and it is not very important
for what follows. What we need to remember for the rest of the paper
is that: 
\begin{enumerate}
\item the HOMFLY polynomial of a \emph{knot} does not depend on the orientation; 
\item if $K\subset S^{3}$ is a knot, then the HOMFLY polynomial of $K$
consists only of terms of the form $z^{2k}a^{2l}$, where $k\in\mathbb{N}$
and $l\in\mathbb{Z}$; 
\item if $K\subset S^{3}$ is a knot, and $K^{*}$ is its \emph{mirror image}
(that is, a knot obtained from $K$ by an orientation-reversing diffeomorphism
of $S^{3}$), then the HOMFLY polynomials of $K$ and $K^{*}$ are
related by the equation 
\[
P\left(K^{*}\right)\left(a,z\right)=P\left(K\right)\left(a^{-1},z\right);
\]
\item the HOMFLY polynomial is \emph{computable}: there exists an algorithm
which computes the HOMFLY polynomial of a link $L\subset S^{3}$ in
finite time, given a link diagram of $L$. 
\end{enumerate}
We can now come back to minimal submanifolds in $\HH^{4}$, and to
Fine's work \cite{FineKnots}. Observe that, if $\Sigma$
is a compact, connected, oriented surface with \emph{connected} boundary,
and $u:\Sigma\to\overline{\HH}^{4}$ is a p-immersion whose boundary
map $u_{\partial}$ is an embedding, then the image $u\left(\Sigma\right)$
is a p-immersed submanifold of $\overline{\HH}^{4}$ intersecting
the boundary at infinity at an oriented knot $K=u_{\partial}\left(\partial\Sigma\right)\subset S^{3}$.
Fine's Conjecture proposes a relation between the coefficients of the
HOMFLY polynomial of $K$, and an appropriate \emph{count} of the
number of minimal p-immersed surfaces in $\overline \HH^{4}$ bounding $K$
as above.

In order to express Fine's Conjecture properly, we need to introduce
the notion of \emph{double point}. 
\begin{defn}
\label{def:double-point}Let $u:\Sigma\to\overline{\HH}^{4}$ be a
p-immersion. A \emph{self-intersection} of $u\left(\Sigma\right)$
is a point $q\in\overline{\HH}^{4}$ such that $u^{-1}\left(q\right)$
contains more than a point. A self-intersection $q$ of $u\left(\Sigma\right)$
is a \emph{double point} if $u^{-1}\left(q\right)$ consists of exactly
two points $p_{1},p_{2}\in\Sigma$, and the images of the differentials
$du_{p_{i}}:T_{p_{i}}\Sigma\to T_{q}\overline{\HH}^{4}$ span the
whole of $T_{q}\overline{\HH}^{4}$. 
\end{defn}

We remark that a \emph{generic} p-immersion $u:\Sigma\to\overline{\HH}^{4}$,
whose boundary restriction $u_{\partial}:\partial\Sigma\to S^{3}$
is an embedding, self-intersects only at a \emph{finite number of
double points} \emph{contained in the interior}. More precisely, let
$V$ be a vector field on $\overline{\HH}^{4}$ along $u$ (i.e. a
section of $u^{*}T\overline{\HH}^{4}$) which is \emph{tangent} to
the boundary at infinity $S^{3}$. If $V$ is close enough to the
zero section of $u^{*}T\overline{\HH}^{4}$, the one-time flow of
$V$ deforms $u$ into another p-immersion $\tilde{u}:\Sigma\to\overline{\HH}^{4}$
whose boundary restriction $\tilde{u}_{\partial}$ is an embedding.
For a \emph{generic} choice of such a $V$, the deformed p-immersion
$\tilde{u}$ self-intersects only at a finite number of double points.
This is a general phenomenon about middle-dimensional immersed submanifolds:
for example, Figure \ref{fig:triple_point_desingularized} shows an
immersed curve in $\mathbb{R}^{2}$ with a triple point, and a generic
deformation which resolves the triple point into three double points.

\begin{figure}[t!]
\centering \includegraphics[width=0.8\textwidth]{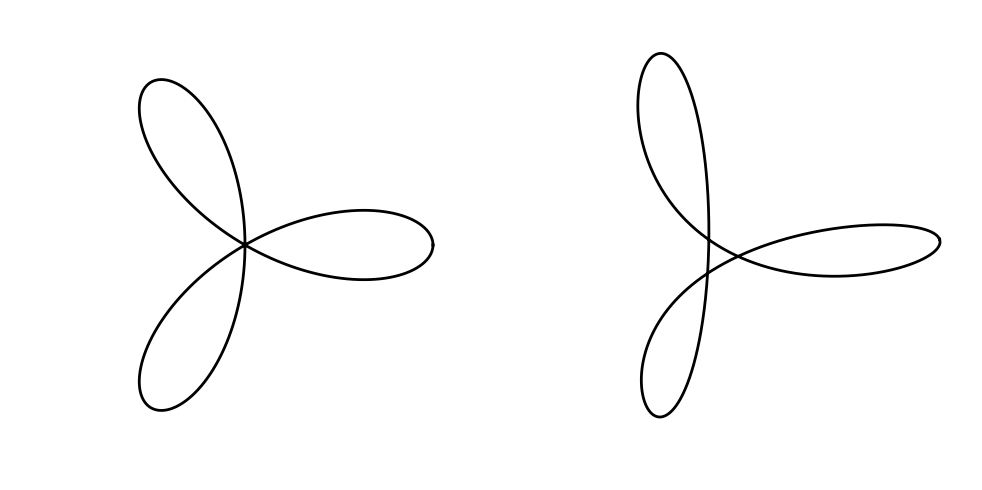}
\caption{An immersed circle in $\mathbb{R}^{2}$ with a triple point, resolved
into three double points by a generic deformation.}
\label{fig:triple_point_desingularized} 
\end{figure}

Assume now that our p-immersion $u:\Sigma\to\overline{\HH}^{4}$ is
such that the boundary map $u_{\partial}$ is an embedding, and that
$u\left(\Sigma\right)$ self-intersects only at a finite number of double
points. Fix also an orientation on $\Sigma$ and $\HH^{4}$. We can
then assign to every double point a \emph{sign}. More precisely, let
$q\in\HH^{4}$ be a double point of $u$, and let $\left\{ p_{1},p_{2}\right\} =u^{-1}\left(q\right)$.
Consider a positively oriented basis $v_{1},v_{2}$ of $T_{p_{1}}\Sigma$,
and a positively oriented basis $v_{3},v_{4}$ of $T_{p_{2}}\Sigma$.
Then the quadruple $w_{1},w_{2},w_{3},w_{4}$, with $w_{1}=d_{p_{1}}u\left(v_{1}\right)$,
$w_{2}=d_{p_{1}}u\left(v_{2}\right)$, $w_{3}=d_{p_{2}}u\left(v_{3}\right)$
and $w_{4}=d_{p_{2}}u\left(v_{4}\right)$, forms a basis of $T_{q}\HH^{4}$.
We assign to $q$ a sign $+1$ (resp. $-1$) if $w_{1},w_{2},w_{3},w_{4}$
is a \emph{positively} (resp. \emph{negatively}) \emph{oriented} basis
of $T_{q}\HH^{4}$. It is easy to check that this definition is well-posed.
In fact, note that the definition does not depend on the orientation
of $\Sigma$, but only on the orientation of $\HH^{4}$. 
\begin{defn}
Let $u:\Sigma\to\overline{\HH}^{4}$ be a p-immersion whose boundary
map $u_{\partial}:\partial\Sigma\to S^{3}$ is an embedding. Assume
that $u$ self-intersects at a finite number of double points. The
\emph{self-intersection number} of $u\left(\Sigma\right)$ is the
sum of the signs of its double points. 
\end{defn}

\begin{rem}
Observe that the self-intersection number is a property of the
p-immersed submanifold $u\left(\Sigma\right)$; indeed, if $\varphi:\Sigma\to\Sigma$
is a diffeomorphism, then $u$ and
$u\circ\varphi$ have exactly the same self-intersections, and their
signs agree. 
\end{rem}

\begin{defn}
Fix an oriented link $L\subset S^{3}$, and two numbers $g\in\mathbb{N}$
and $d\in\mathbb{Z}$. We denote by $\mathcal{M}_{g,d}\left(L\right)$
the moduli space of connected, oriented, p-immersed minimal surfaces
$S\subset\overline{\HH}^{4}$ satisfying the following properties: 
\begin{enumerate}
\item $S=u\left(\Sigma\right)$, where $\Sigma$ is a compact, connected,
oriented genus $g$ surface with boundary, and $u:\Sigma\to\overline{\HH}^{4}$
is a p-immersion whose boundary map $u_{\partial}:\partial\Sigma\to S^{3}$
is an embedding parametrising the oriented link $L$; 
\item $S$ self-intersects at a finite number of double points in $\HH^{4}$; 
\item $S$ has self-intersection number $d$. 
\end{enumerate}
\end{defn}


The following result\footnote{For expositional clarity, here we ignore regularity issues, which
are irrelevant for this work.} is a consequence of Theorems 3.32, 3.36 and Proposition 3.42 in \cite{FineKnots}: 
\begin{thm}
(Fine) For a generic oriented link $L\in\mathcal{L}_{\mathrm{or}}$,
the moduli space $\mathcal{M}_{g,d}\left(L\right)$ is a naturally
oriented $0$-dimensional manifold. 
\end{thm}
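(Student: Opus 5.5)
The plan follows the standard Fredholm/Sard--Smale route used in \cite{FineKnots}, in three steps: a smooth universal moduli space, a Fredholm projection of index zero, and then orientations.

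\textbf{Step 1: the universal moduli space.} Fix a genus $g$ surface $\Sigma$ with boundary. Consider the Banach manifold $\mathcal{B}$ of pairs $(u,\gamma)$, where $\gamma$ ranges over (weighted H\"older) embeddings $\partial\Sigma\to S^{3}$ parametrising links, and $u$ is a p-immersion $\Sigma\to\overline{\HH}^{4}$ with $u_{\partial}=\gamma$. We quotient by the diffeomorphisms of $\Sigma$, or equivalently fix a gauge by asking the variations to be normal. The tension field $\tau(u)$ is a section of $u^{*}{}^{0}T\overline{\HH}^{4}$, and we restrict attention to its normal component. This gives a section $\mathcal{F}$ of a Banach bundle over $\mathcal{B}$.

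By Theorem \ref{thm:jared}, every solution is polyhomogeneous and meets $S^{3}$ orthogonally. Hence all zeros lie in a well-behaved class of p-immersions on which the Mazzeo--Melrose 0-calculus applies.

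\textbf{Step 2: Fredholm analysis of the Jacobi operator.} The linearisation of $\mathcal{F}$ in the $u$ direction is the normal Jacobi operator $J_{u}$, which is uniformly degenerate elliptic. I would compute its indicial roots at the boundary. For a surface in $\HH^{4}$ the normal bundle has rank $2$, and the indicial roots are $\{-1,2\}$, as for minimal surfaces in $\HH^{n}$. Choosing a weight strictly between these roots makes $J_{u}$ Fredholm between weighted 0-H\"older spaces.

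Its index can then be computed by a relative index/excision argument, or by deforming to a model such as a totally geodesic disc. This should give index $0$ once the boundary data are fixed. The fact that the space of boundary knots is infinite-dimensional is accounted for exactly by the $\gamma$ variable.

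Next I would prove that the full linearisation, including $\gamma$-variations, is surjective. Suppose a Jacobi field $\phi$ in the cokernel (the kernel of the adjoint, with the dual weight) is $L^{2}$-orthogonal to all variations coming from $\gamma$. Varying $\gamma$ prescribes the leading coefficient at one indicial root. Via a Green's identity boundary pairing, this forces the other leading coefficient of $\phi$ to vanish. Unique continuation at infinity, for instance through the Mazzeo unique continuation theorem for 0-elliptic operators, then gives $\phi=0$.

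So the universal moduli space $\widetilde{\mathcal{M}}=\mathcal{F}^{-1}(0)$ is a smooth Banach manifold. I expect this surjectivity argument, together with the index computation, to be the main obstacle. If direct unique continuation fails, I would fall back on the boundary pairing computed from the expansion in Theorem \ref{thm:jared}.

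\textbf{Step 3: Sard--Smale, double points, and orientation.} The projection $\pi:\widetilde{\mathcal{M}}\to\{\gamma\}$ is Fredholm with index $0$. By Sard--Smale, for generic $L$ the fibre $\pi^{-1}(L)$ is a $0$-dimensional manifold. Genus $g$ and self-intersection number $d$ are locally constant, because double points are transverse and stay in the interior. So $\mathcal{M}_{g,d}(L)$ is an open subset of this fibre.

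A further transversality argument of the same type, applied to the double-point evaluation map on $\Sigma\times\Sigma$ minus the diagonal, ensures that generic solutions have only transverse double points. This uses the fact that minimal surfaces in the image are generically somewhere injective.

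For the orientation, the family of Fredholm operators $J_{u}$ has a determinant line bundle over $\widetilde{\mathcal{M}}$. I would trivialise it by noting that $J_{u}$ is a compact perturbation of a complex-linear operator, using the complex structure on the rank-$2$ oriented normal bundle. The complex-linear family has a canonical orientation, so the determinant line is naturally oriented. Each point of the $0$-dimensional fibre, where $J_{u}$ is an isomorphism, therefore receives a sign $\pm1$. This makes $\mathcal{M}_{g,d}(L)$ a naturally oriented $0$-manifold.
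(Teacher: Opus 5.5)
Your outline is correct and follows essentially the same route as the results of Fine that the paper cites in place of a proof (Theorems 3.32, 3.36 and Proposition 3.42 of \cite{FineKnots}): Fredholm theory of the Jacobi operator in the 0-calculus, transversality using variations of the link, Sard--Smale for the index-zero projection, and an orientation of the determinant line. Two steps you leave loose close easily. First, the index vanishes because $J_u$ is formally self-adjoint and the interval $(-1,2)$ is symmetric about the $L^{2}$ weight $\tfrac12$, so at every admissible weight both the kernel and the cokernel are the Jacobi fields decaying like $\rho^{2}$ (deforming to a totally geodesic disc is not available for $g>0$). Second, the antilinear part of $J_u$, namely the trace-free part of $\phi\mapsto\sum_{i,j}\langle A_{ij},\phi\rangle A_{ij}$, is compact only because orthogonality at infinity forces $|A|_{g_{\mathrm{hyp}}}=O(\rho)$.
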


Observe that an oriented $0$-dimensional manifold is just a discrete
set of points, each of which is labelled by a $\pm1$. If this manifold
were \emph{compact}, it would consist of a \emph{finite} number of points
labelled by $\pm1$; we could then ``count them with sign'', that is,
add their labels and get an integer. We can finally formulate Fine's
Conjecture: 
\begin{conjecture}
(Fine) Let $\mathcal{K}$ be the space of knots. Then, for $K\in\mathcal{K}$
generic, the moduli space $\mathcal{M}_{g,d}\left(K\right)$ is a
\textbf{compact}, naturally oriented $0$-dimensional manifold. In
this case, the signed count of minimal surfaces in $\mathcal{M}_{g,d}\left(K\right)$
is an oriented knot invariant, and it coincides with the coefficient
of $a^{2\left(g+d\right)}z^{2g}$ in the HOMFLY polynomial of $K$. 
\end{conjecture}

\begin{rem}
In \cite{FineKnots}, Fine does not explicitly formulate the above
conjecture, but he explains (cf. \S1.1 and \S1.6 of \cite{FineKnots})
why there should be a close relation between the signed count of
minimal surfaces in $\mathcal{M}_{g,d}\left(K\right)$, and the coefficients
of the HOMFLY polynomial of $K$. We are grateful to him for sharing
with us the precise formulation of the conjecture. 
\end{rem}

\begin{rem}
\label{rem:Part-of-Fine-conjecture-is-proved}Part of Fine's Conjecture
has been proved by Fine himself in \cite{FineKnots}. The main result
is that, for generic $K\in\mathcal{K}$, the moduli spaces $\mathcal{M}_{0,d}\left(K\right)$
(that is, the moduli spaces of p-immersed minimal \emph{discs} in $\overline{\HH}^{4}$
bounding $K$) is a compact oriented $0$-dimensional manifold, and
its signed count is an oriented knot invariant. 
\end{rem}

Proving Fine's Conjecture would be a major breakthrough in the understanding
of minimal surfaces in hyperbolic $4$-space. Indeed, take a generic
knot $K$. While the coefficient of $z^{2g}a^{2\left(g+d\right)}$
in the HOMFLY polynomial of $K$ is a computable\footnote{It may be computationally costly to compute the HOMFLY polynomial:
the best known algorithms are exponential in the number of crossings
of the given link diagram.} quantity, the signed count of elements of $\mathcal{M}_{g,d}\left(K\right)$
is \emph{virtually impossible} to compute, due to the absence of general
methods to solve non-linear partial differential equations. For this reason, if true, the conjecture
should not be seen as a tool to gain information about the knot itself,
but rather as a tool to predict the \emph{existence} of minimal surfaces
in $\HH^{4}$ \emph{with specific genus and self-intersection number},
bounding a given knot in $S^{3}$. This would be a major achievement;
indeed, to the authors' knowledge, at present the only known existence
result is the previously mentioned theorem of Anderson (cf. Theorem
\ref{thm:anderson}). However, Anderson's theorem does not tell us
the genus of the area-minimising solution, nor does it give us any
information about the number of solutions or their self-intersection
numbers.

To explain better how one can use Fine's Conjecture to formulate predictions
on minimal surfaces, we consider for example the \emph{right-handed}
\emph{trefoil knot}, denoted in Alexander--Briggs (A--B) notation\footnote{The A--B notation organises prime knots based on their crossing number,
that is, the minimal number of crossings of a diagram of the knot.
The name $a_{n}$ indicates `the $n$-th knot with $a$ crossings'.} as $3_{1}$. This knot is described by the second knot diagram in
Figure \ref{fig:three_knots}. The HOMFLY polynomial of $3_{1}$ is
$\left(2a^{2}-a^{4}\right)+a^{2}z^{2}$. Therefore, Fine's Conjecture
predicts that a generic right-handed trefoil is bounded by: 
\begin{enumerate}
\item (term $2a^{2}$) \emph{at least two} distinct p-immersed minimal discs
with self-intersection number $1$; 
\item (term $-a^{4}$) \emph{at least one} p-immersed minimal disc with
self-intersection number $2$; 
\item (term $a^{2}z^{2}$) \emph{at least one} p-immersed genus $1$ minimal
surface with self-intersection number $0$. 
\end{enumerate}
Thus, if true, Fine's Conjecture would provide much more information
than the simple existence result guaranteed by Anderson's Theorem.

Another interesting example is the \emph{Stevedore knot}, usually
denoted by $6_{1}$, and described by the third knot diagram in Figure
\ref{fig:three_knots}. The HOMFLY polynomial of $6_{1}$ is $\left(a^{-2}-a^{2}+a^{4}\right)+\left(-1-a^{2}\right)z^{2}$.
As in the previous case, the non-zero terms in this polynomial predict
the existence of various p-immersed minimal surfaces bounding this
knot. However, observe that the HOMFLY polynomial \emph{does not have
a constant term}. This means that Fine's Conjecture does \emph{not}
predict the existence of a p-immersed minimal disc with self-intersection
number $0$. On the other hand, it is known that the Stevedore knot
is \emph{smoothly slice}: this means that it is bounded by a smooth,
but not necessarily minimal, p-embedded disc in $\overline{\HH}^{4}$.
Putting together these two observations, Fine's Conjecture predicts that the area-minimising
solution $S$ guaranteed by Anderson's Theorem has either positive
genus or, if it is an embedded disc, it \emph{must} be accompanied
by another p-immersed minimal disc $S'$ with self-intersection number
$0$, and with sign opposite to that of $S$ in the natural orientation of
$\mathcal{M}_{0,0}\left(6_{1}\right)$. Again, this information is
highly non-trivial and virtually inaccessible by direct analytical
means.

\section{\label{sec:machine-learning}The Machine Learning approach: PINNs}

This section constitutes the heart of the paper. After a general introduction
to physics-informed neural networks (PINNs), we describe in detail
our machine learning framework to construct minimal p-submanifolds of
$\overline{\HH}^{4}$. We then describe the training strategy. Finally,
we describe a method to find and analyse the self-intersections of
the minimal p-submanifolds found using our framework.

\subsection{\label{subsec:PINNs}Physics-Informed Neural Networks}

We start by recalling the notion of multi-layered perceptron: 
\begin{defn}[Multi-Layered Perceptron]
\label{def:MLP} Let $L\geq1$ and $d_{0},d_{1},\ldots,d_{L}\in\mathbb{N}$
be positive integers, and let $\sigma\colon\mathbb{R}\to\mathbb{R}$
be a continuous non-polynomial function, called \emph{activation function}.
For each $\ell=0,\ldots,L-1$, let 
\[
A_{\ell}\colon\mathbb{R}^{d_{\ell}}\to\mathbb{R}^{d_{\ell+1}},\qquad A_{\ell}(x)=W_{\ell}x+b_{\ell},
\]
be an affine map with \emph{weight matrix} $W_{\ell}\in\mathbb{R}^{d_{\ell+1}\times d_{\ell}}$
and \emph{bias vector} $b_{\ell}\in\mathbb{R}^{d_{\ell+1}}$. The
\emph{multi-layered perceptron} (MLP) associated with this data is
\[
\mathrm{NN}_{\theta}\colon\mathbb{R}^{d_{0}}\longrightarrow\mathbb{R}^{d_{L}},\qquad\mathrm{NN}_{\theta}\;=\;A_{L-1}\circ(\sigma\circ A_{L-2})\circ\cdots\circ(\sigma\circ A_{0}),
\]
where $\sigma$ acts component-wise. The parameters $\theta=\{(W_{\ell},b_{\ell})\}_{\ell=0}^{L-1}$
are called the \emph{learnable parameters} of the MLP. We call $d_{0}$
and $d_{L}$ the \emph{input} and \emph{output dimensions}, $L$ the
\emph{depth}, and $(d_{0},\ldots,d_{L})$ the \emph{architecture}. 
\end{defn}

Common choices of activation functions are $\sigma=\tanh$ and $\sigma=\mathrm{SiLU}$
(the \emph{sigmoid linear unit}, $t\mapsto t/(1+e^{-t})$); both are
smooth, bounded, and non-polynomial. For any fixed input $x\in\mathbb{R}^{d_{0}}$,
the map $\theta\mapsto\mathrm{NN}_{\theta}(x)$ is smooth in the parameters,
a fact that is crucial for gradient-based training. The theoretical
justification for using MLPs as function approximators is the following
classical result: 
\begin{thm}[Universal Approximation Theorem, {\cite{HORNIK1989359}}]
\label{thm:UAT} Let $K\subset\mathbb{R}^{d_{0}}$ be compact, let
$\sigma\colon\mathbb{R}\to\mathbb{R}$ be continuous and non-polynomial,
and let $f\in C(K;\mathbb{R}^{d_{2}})$. Then, for every $\varepsilon>0$,
there exist a width $d_{1}\in\mathbb{N}$ and parameters $\theta$
such that the single-hidden-layer MLP $\mathrm{NN}_{\theta}\colon\mathbb{R}^{d_{0}}\to\mathbb{R}^{d_{2}}$
with activation function $\sigma$ and architecture $\left(d_{0},d_{1},d_{2}\right)$
satisfies 
\[
\sup_{x\in K}\bigl|\mathrm{NN}_{\theta}(x)-f(x)\bigr|<\varepsilon.
\]
\end{thm}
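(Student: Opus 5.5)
We reduce to scalar outputs. It suffices to approximate each component $f_j$ of $f$ separately to within $\varepsilon/\sqrt{d_2}$. A single-hidden-layer network with output dimension $d_2$ can then be built by stacking the hidden units of the $d_2$ scalar networks side by side, with a block-structured final affine map $A_1$. So we assume $d_2=1$ and show that the linear span
\[
\Sigma_\sigma:=\operatorname{span}\left\{ x\mapsto\sigma(w\cdot x+b):w\in\mathbb{R}^{d_0},\,b\in\mathbb{R}\right\}
\]
is dense in $C(K)$ for the sup norm. This span is exactly the set of single-hidden-layer MLPs with activation $\sigma$ and output dimension $1$.

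\textbf{Step 1: reduction to dimension one via ridge functions.} Trigonometric polynomials, or more simply functions of the form $x\mapsto \cos(w\cdot x)$ and $\sin(w\cdot x)$, span a subalgebra of $C(K)$ that separates points and contains constants. By Stone--Weierstrass this subalgebra is dense. Each such function is a ridge function $g(w\cdot x)$ with $g$ continuous on the compact interval $w\cdot K\subset\mathbb{R}$. It therefore suffices to prove the one-dimensional statement: for every compact interval $I\subset\mathbb{R}$, the span of $\{t\mapsto\sigma(\lambda t+b)\}$ is dense in $C(I)$. Indeed, substituting $t=w\cdot x$ turns $\sigma(\lambda t+b)$ into $\sigma((\lambda w)\cdot x+b)\in\Sigma_\sigma$.

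\textbf{Step 2: the one-dimensional case for smooth $\sigma$.} First suppose $\sigma\in C^\infty$. The difference quotients
\[
\frac{\sigma((\lambda+h)t+b)-\sigma(\lambda t+b)}{h}
\]
lie in the span and converge uniformly on $I$ to $t\,\sigma'(\lambda t+b)$. Iterating, $t^k\sigma^{(k)}(\lambda t+b)$ lies in the closure of the span for every $k$. Setting $\lambda=0$ gives $t^k\sigma^{(k)}(b)$. Since $\sigma$ is not a polynomial, for each $k$ there is some $b$ with $\sigma^{(k)}(b)\neq0$. This last fact is the main obstacle. A smooth function all of whose points satisfy $\sigma^{(k(x))}(x)=0$ for some $k(x)$ is a polynomial, by a Baire category argument: the closed sets $\{\sigma^{(k)}=0\}$ cover $\mathbb{R}$, so one has nonempty interior, and an induction over the open set where $\sigma$ is locally polynomial gives global polynomiality. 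Once this is settled, all monomials lie in the closure of the span, and Weierstrass approximation finishes the proof.

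\textbf{Step 3: removing smoothness.} For a general continuous non-polynomial $\sigma$, convolve with a mollifier $\phi\in C_c^\infty$ to get $\sigma_\phi:=\sigma*\phi$. Each function $\sigma_\phi(\lambda t+b)$ is a uniform limit on $I$ of Riemann sums of translates $\sigma(\lambda t+b-s)\phi(s)$, so it lies in the closure of $\Sigma_\sigma$. If $\sigma_\phi$ is not a polynomial for some $\phi$, Step 2 applies to it. Otherwise every $\sigma_\phi$ is a polynomial. A category argument over the Fréchet space $C_c^\infty([-1,1])$ then bounds their degrees uniformly by some $N$. Letting $\phi$ tend to a delta function, $\sigma$ becomes a locally uniform limit of polynomials of degree at most $N$, hence is itself a polynomial, a contradiction. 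Combining Steps 1--3 yields density of $\Sigma_\sigma$, and hence the theorem.
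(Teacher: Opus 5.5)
The paper gives no proof of this theorem; it is quoted from the literature. Your argument is correct and is essentially the standard proof of Leshno, Lin, Pinkus and Schocken (1993). That is the result whose hypothesis ``continuous and non-polynomial'' the statement actually uses.

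The cited Hornik--Stinchcombe--White paper shares your Step~1 skeleton: Stone--Weierstrass for trigonometric ridge functions, then approximating $\cos$ on an interval by one-dimensional sums. Its one-dimensional step, however, uses the squashing shape of the activation (monotone with limits $0$ and $1$) via staircase approximations. So it does not cover arbitrary continuous non-polynomial $\sigma$. Your Steps~2--3 are what give that generality: differentiating in the dilation parameter to produce monomials, then mollifying and using a category argument to rule out the case where every mollification is a polynomial.

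One correction of emphasis in Step~2: the fact you call the main obstacle is immediate. For each fixed $k$ you only need some $b_k$ with $\sigma^{(k)}(b_k)\neq 0$, and different $k$ may use different points. This holds because $\sigma^{(k)}\equiv 0$ would make $\sigma$ a polynomial of degree less than $k$.

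The Baire lemma you invoke (if every point has some vanishing derivative, then $\sigma$ is a polynomial) gives a single $b$ at which all derivatives are nonzero. Your argument never uses that. Your sketch of the lemma also skips the real work: showing that the set where $\sigma$ is not locally polynomial has no isolated points, and applying Baire to that closed set. You can simply delete it.

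The category argument that is genuinely needed is the one in Step~3, and you handle it correctly. The sets $\{\phi : \sigma*\phi^{(n+1)}=0\}$ are closed linear subspaces covering $C_c^\infty([-1,1])$, so one of them is the whole space. Finite-dimensionality of the polynomials of degree at most $N$ then lets you pass to the limit $\phi\to\delta$.
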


In other words, the class of depth-two MLPs is $C^{0}$-dense in $C(K;\mathbb{R}^{d_{L}})$
for every compact $K$. 
\begin{rem}
Two remarks on the previous result are in order. First, the required
width $d_{1}$ may depend sensitively on $\varepsilon$ and on $f$;
in bad cases, $d_{1}$ could increase exponentially as $\varepsilon\to0$.
Second, analogous results hold for \emph{deep and narrow} networks:
provided the width satisfies $d_{1}\geq d_{0}+1$, a sufficiently
deep MLP can also approximate any continuous function to arbitrary
accuracy \cite{kidger2020universalapproximationdeepnarrow,hanin2018approximatingcontinuousfunctionsrelu}.
In practice, deep networks with a moderate fixed width are preferred:
depth provides a form of representational efficiency that width alone
cannot replicate, and the total parameter count for a prescribed approximation
quality is typically far smaller. 
\end{rem}

We can now give a brief definition of a \emph{physics-informed neural
network} (PINN) \cite{raissi2017physicsinformeddeeplearning}: in
its simplest form, a PINN consists of an MLP whose parameters are
optimised so that the output approximates a solution of a prescribed
PDE. The more precise setup is as follows.

Let $\Omega\subset\mathbb{R}^{d}$ be a smooth domain. Suppose that
we are interested in solving a PDE $\mathcal{F}\left[u\right]=0$
for a function $u:\Omega\to\mathbb{R}^{m}$, subject to a boundary
condition $\mathcal{B}\left[u\right]=0$ on $\partial\Omega$. The
idea is to look for an approximate solution $\hat{u}=\mathrm{NN}_{\theta}$
in the class of MLPs with fixed architecture $\left(d,...,m\right)$
and activation function $\sigma$. Schematically, the optimisation
procedure consists in: 
\begin{enumerate}
\item drawing a (pseudo-)random set of interior collocation points $\left\{ x_{i}\right\} _{i=1}^{N_{\Omega}}\subset\Omega^{\circ}$
and a set of boundary collocation points $\left\{ x_{j}'\right\} _{j=1}^{N_{\partial\Omega}}\subset\partial\Omega$; 
\item optimising the learnable parameters $\theta$ in order to minimise
the loss function 
\begin{equation}
\mathcal{L}(\theta)\;=\;\frac{1}{N_{\Omega}}\sum_{i=1}^{N_{\Omega}}\bigl|\mathcal{F}[\mathrm{NN}_{\theta}](x_{i})\bigr|^{2}\;+\;\frac{1}{N_{\partial\Omega}}\sum_{j=1}^{N_{\partial\Omega}}\bigl|\mathcal{B}[\mathrm{NN}_{\theta}](x'_{j})\bigr|^{2}.\label{eq:PINN_loss}
\end{equation}
\end{enumerate}
Evaluating the loss function requires computing the partial derivatives
$\partial^{\alpha}\mathrm{NN}_{\theta}(x)$ at the collocation points,
with respect to the \emph{input} coordinates --- not the parameters
$\theta$, which are the variables of the optimisation. Two na{\"i}ve
approaches fail: differentiating the closed-form expression for $\mathrm{NN}_{\theta}$
symbolically produces expressions that grow exponentially with the
depth $L$, while finite differences incur a truncation error of order
$O(h^{p})$ and become prohibitively expensive for derivatives of
order higher than one.

\emph{Automatic differentiation} (AD) \cite{baydin2018automatic,griewank2008evaluating}
avoids both difficulties. Since $\mathrm{NN}_{\theta}$ is a composition
of elementary arithmetic operations --- addition, multiplication,
$\exp$, $\tanh$, and so on --- for which exact derivative rules
are known, one can propagate derivative information through the \emph{computational
graph} alongside the primal evaluation. This yields exact derivatives
(up to machine precision) at a computational cost proportional to
evaluating $\mathrm{NN}_{\theta}$ itself.

Geometrically, AD evaluates the chain rule in two natural directions.
Forward-mode AD computes the differential (or pushforward), mapping
an input tangent vector $v\in\mathbb{R}^{d_{0}}$ to the output tangent
vector $d(\mathrm{NN}_{\theta})_{x}(v)\in\mathbb{R}^{d_{L}}$. This
is computationally efficient when $d_{0}\ll d_{L}$. Conversely, Reverse-mode
AD (or backpropagation) computes the pullback. It maps an output cotangent
vector (or $1$-form) $w\in\mathbb{R}^{d_{L}}$ to the input cotangent
vector $d(\mathrm{NN}_{\theta})_{x}^{*}(w)\in\mathbb{R}^{d_{0}}$.
This is highly efficient when $d_{L}\ll d_{0}$, exactly what is needed
to compute the gradient $\nabla_{\theta}\mathcal{L}$ of a scalar
loss function. Furthermore, these modes compose naturally: applying
them iteratively yields higher-order derivatives, such as the Hessian,
while maintaining a cost proportional to the original function evaluation.

When the PDE is second order (as in our case), we need to compute
both the Jacobian $J_{\hat{u}}\in\mathbb{R}^{m\times d}$ and the
Hessian $H_{\hat{u}}\in\mathbb{R}^{m\times d\times d}$ of $\hat{u}=\mathrm{NN}_{\theta}$
at all collocation points, with respect to the input coordinates.
One reverse sweep provides the full $J_{\hat{u}}$, and two forward
sweeps --- one per input coordinate --- differentiate that result
to give $H_{\hat{u}}$.

The differential operators in $\mathcal{F}$ and $\mathcal{B}$ are
applied to $\mathrm{NN}_{\theta}$ via AD; no mesh or quadrature scheme
is needed. The gradient $\nabla_{\theta}\mathcal{L}$ is likewise
computed by AD --- now in the reverse mode with respect to $\theta$
--- and used to update the parameters iteratively. The standard optimiser
is \emph{Adam} \cite{kingma2014adam}, a stochastic first-order method
with adaptive per-coordinate learning rates; a quasi-Newton method
such as L-BFGS \cite{liu1989limited} is often used for final
refinement once a moderate loss level has been reached.

Let us illustrate the framework with a simple example before specialising
to our setting. 
\begin{example}[Laplace--Dirichlet problem]
\label{ex:laplace_pinn} Let $\Omega\subset\mathbb{R}^{d}$ be a
smooth bounded domain and $\varphi\in C^{\infty}(\partial\Omega)$.
The Dirichlet boundary value problem 
\[
\begin{cases}
\Delta f=0 & \text{in} \Omega,\\
f\big|_{\partial\Omega}=\varphi,
\end{cases}
\]
has a unique smooth solution $f$. A PINN approximates $f$ by choosing
an MLP $\mathrm{NN}_{\theta}\colon\mathbb{R}^{d}\to\mathbb{R}$ and
minimising 
\[
\mathcal{L}(\theta)\;=\;\frac{1}{N_{\Omega}}\sum_{i=1}^{N_{\Omega}}\Bigl(\,\sum_{k=1}^{d}\partial_{x^{k}}^{2}\,\mathrm{NN}_{\theta}(x_{i})\Bigr)^{2}\;+\;\frac{1}{N_{\partial\Omega}}\sum_{j=1}^{N_{\partial\Omega}}\bigl(\mathrm{NN}_{\theta}(x'_{j})-\varphi(x'_{j})\bigr)^{2},
\]
where the Laplacian is evaluated by applying AD twice to $\mathrm{NN}_{\theta}$.
Note that this is a \emph{semi-supervised} scheme: the boundary term
fits the network to the known data $\varphi$ (supervised), while
the interior term is entirely unsupervised --- only the PDE is imposed,
with no a priori knowledge of $f$ in the interior of $\Omega$. 
\end{example}

A common difficulty in implementing \eqref{eq:PINN_loss} is choosing
the relative weight of the two terms: if the interior penalty is too
large, the boundary condition may be poorly satisfied; if the boundary
penalty dominates, the network may fit the boundary data at the cost
of ignoring the PDE in the interior. By employing some careful design
choices, \emph{we completely avoid this balance issue}. Our
problem consists in solving a second order PDE (the minimality equation
for a p-immersion $u:D^{2}\to\overline{\HH}^{4}$) subject to a Dirichlet-type
boundary condition (the boundary map $u_{\partial}:S^{1}\to S^{3}$
must be a prescribed oriented knot embedding). As we shall see in
the next subsection, our model \emph{satisfies the boundary condition}
\emph{exactly for every value of the learnable parameters}. Therefore,
the loss only contains the interior term (cf. Equation \eqref{eq:training_loss}).

\subsection{\label{subsec:model_description}Model description}

In this subsection, we describe our model in detail. The model produces,
\emph{by design}, simple $b$-maps (cf. Definition \ref{def:simple-b-map})
$u:D^{2}\to\overline{\HH}^{n+1}$ from the topologically closed unit
disc $D^{2}\subset\mathbb{R}^{2}$ to the compactified hyperbolic
space, whose boundary map $u_{\partial}:S^{1}\to S^{n}$ is a prescribed
embedding. 
\begin{rem}
We restrict to minimal \emph{discs}, rather than higher genus domains,
mainly for simplicity. Maps from the disc are particularly simple
to implement, and moreover part of Fine's Conjecture (cf. \S\ref{subsec:Fine's-conjecture}
and Remark \ref{rem:Part-of-Fine-conjecture-is-proved}). While the
model itself is perfectly valid for higher genus surfaces, the implementation
requires substantial modifications to include this case; we shall discuss these modifications elsewhere.
\end{rem}

\begin{rem}
Since we assume $n\geq2$, the boundary embedding $S^{1}\to S^{n}$
cannot have a dense image in $S^{n}$, and therefore we can assume without
loss of generality that $u_{\partial}\left(S^{1}\right)$ does not
contain the north pole of $S^{n}$. Thanks to this assumption, we
can use \emph{half-space coordinates} for hyperbolic space; these
coordinates are particularly convenient for computations. The hyperbolic
space $\HH^{n+1}$ is seen as the interior of the upper half-space
$\mathbb{R}_{\geq0}\times\mathbb{R}^{n}$; calling $\left(X,\boldsymbol{Y}\right)=\left(X,Y^{1},...,Y^{n}\right)$
the coordinates on $\mathbb{R}_{\geq0}\times\mathbb{R}^{n}$, the
hyperbolic metric in these coordinates is 
\[
g_{\hyp}=\frac{dX^{2}+\left|d\boldsymbol{Y}\right|^{2}}{X^{2}}.
\]
Of course this metric is equivalent to the hyperbolic metric in the
interior of the unit ball: the map 
\[
\Phi:\left(X,\boldsymbol{Y}\right)\mapsto\left(\frac{X^{2}+\left|\boldsymbol{Y}\right|^{2}-1}{\left(X+1\right)^{2}+\left|\boldsymbol{Y}\right|^{2}},\frac{2\boldsymbol{Y}}{\left(X+1\right)^{2}+\left|\boldsymbol{Y}\right|^{2}}\right)\in B^{n+1}
\]
is an isometry from the half-space model to the unit ball model.
Moreover, $\Phi$ extends to a map from the closed upper half-space
$\mathbb{R}_{\geq0}\times\mathbb{R}^{n}$ to the closed unit ball
$B^{n+1}$, and the restriction of $\Phi$ to the boundary $\mathbb{R}^{n}=\left\{ X=0\right\} $
is a conformal diffeomorphism $\Phi:\mathbb{R}^{n}\to S^{n}\backslash\left\{ N\right\} $,
where $N=\left(1,\boldsymbol{0}\right)$ is the north pole in $S^{n}$.
It is easy to check that $\Phi$ is precisely the inverse of the stereographic
projection. 
\end{rem}

The model generates maps $D^{2}\to\overline{\HH}^{n+1}$ which depend
essentially on five objects $\gamma$, $\rho$, $\ext$, $k$, $\mathrm{NN}$,
described in detail below. While $\gamma,\rho,\ext,k$ should be thought
of as \emph{hyperparameters} of the model, the object $\mathrm{NN}$
is a MLP which represents the only learnable part of the map. By
an \emph{instance} of the model, we mean a choice of the objects $\gamma$,
$\rho$, $\ext$, $k$, $\mathrm{NN}$ and of the learnable parameters
of $\mathrm{NN}$.

Every instance of the model is a map of the form 
\begin{align*}
u_{\gamma,\rho,\ext,k,\mathrm{NN}}:D^{2} & \to\overline{\HH}^{n+1}\\
\left(x,y\right) & =\left(\rho\exp\left(\mathrm{NN}^{X}\right),\ext\left(\gamma\right)+\rho^{k}\mathrm{NN}^{\boldsymbol{Y}}\right)\left(x,y\right).
\end{align*}
Here $\rho\exp\left(\mathrm{NN}^{X}\right):D^{2}\to\mathbb{R}_{\geq0}$
is the $X$-component of the map, $\ext\left(\gamma\right)+\rho^{k}\mathrm{NN}^{\boldsymbol{Y}}:D^{2}\to\mathbb{R}^{n}$
is the $\boldsymbol{Y}$-component, and: 
\begin{enumerate}
\item $\gamma:S^{1}\to\mathbb{R}^{n}$ is an embedding; 
\item $\rho$ is a boundary defining function for $D^{2}$ (cf. Definition
\ref{def:bdf}); 
\item $\ext$ is an ``extension operator'', associating to $\gamma:S^{1}\to\mathbb{R}^{n}$
a map $\ext\left(\gamma\right):D^{2}\to\mathbb{R}^{n}$ such that
$\ext\left(\gamma\right)_{|\partial D^{2}}=\gamma$; 
\item $k\in\left\{ 1,2\right\} $ is the decay exponent of the multiplier
$\rho^{k}$ in the $\boldsymbol{Y}$-component of the map; 
\item finally, $\mathrm{NN}:\mathbb{R}^{2}\to\mathbb{R}^{n+1}$ is a simple
MLP (cf. Definition \ref{def:MLP}); the first component is $\mathrm{NN}^{X}:\mathbb{R}^{2}\to\mathbb{R}$,
while the remaining components form a map $\mathrm{NN}^{\boldsymbol{Y}}:\mathbb{R}^{2}\to\mathbb{R}^{n}$. 
\end{enumerate}
Let us comment on \emph{why} we designed our model this way. First
of all, every instance of the model described above is by construction
a simple $b$-map $D^{2}\to\overline{\HH}^{n+1}$. To see this, we
recall the following observation from \cite{UsulaBiharmonic}: 
\begin{lem}
(Definition 15 and Remark 16 of \cite{UsulaBiharmonic}) A smooth
map $u:M\to N$ between compact manifolds with boundary is a simple
$b$-map if and only if the pull-back of some boundary defining function
on $N$ is a boundary defining function on $M$. 
\end{lem}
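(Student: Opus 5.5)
We prove the two implications separately. Recall that a simple $b$-map $u:M\to N$ maps $\partial M$ into $\partial N$ and $M^{\circ}$ into $N^{\circ}$, and is transverse to $\partial N$. The key observation is a set-theoretic identity. Let $r$ be any boundary defining function on $N$ and put $\tilde r:=r\circ u=u^{*}r$. Then the zero locus of $\tilde r$ is $u^{-1}(\partial N)$. So the condition ``$\partial M\subset u^{-1}(\partial N)$ and $u(M^{\circ})\subset N^{\circ}$'' is exactly the condition $\{\tilde r=0\}=\partial M$. The remaining work is to match transversality with the non-vanishing of $d\tilde r$ along $\partial M$.

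($\Rightarrow$) Suppose $u$ is a simple $b$-map and let $r$ be any boundary defining function on $N$. Clearly $\tilde r\ge0$ and $\tilde r$ is smooth. By the observation above, $\{\tilde r=0\}=\partial M$.

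Now fix $p\in\partial M$ and set $q=u(p)\in\partial N$. Transversality to $\partial N$ means $du_{p}(T_{p}M)+T_{q}\partial N=T_{q}N$. Since $dr_{q}\neq0$ and $\ker dr_{q}=T_{q}\partial N$, this forces $dr_{q}\circ du_{p}\neq0$. Hence $d\tilde r_{p}\neq0$, and $\tilde r$ is a boundary defining function on $M$. This direction in fact holds for \emph{every} boundary defining function on $N$.

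($\Leftarrow$) Suppose $\tilde r=u^{*}r$ is a boundary defining function on $M$ for some boundary defining function $r$ on $N$. Since $\{\tilde r=0\}=\partial M$ and $\{\tilde r=0\}=u^{-1}(\partial N)$, we get $u(\partial M)\subset\partial N$ and $u(M^{\circ})\subset N^{\circ}$.

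For transversality, take $q\in\partial N$ and $p\in u^{-1}(q)$; necessarily $p\in\partial M$. We have $dr_{q}\circ du_{p}=d\tilde r_{p}\neq0$, so $du_{p}(T_{p}M)\not\subset\ker dr_{q}=T_{q}\partial N$. Because $T_{q}\partial N$ has codimension one in $T_{q}N$, this gives $du_{p}(T_{p}M)+T_{q}\partial N=T_{q}N$, which is transversality.

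The only point needing care is the meaning of ``transverse to $\partial M$'' in Definition \ref{def:simple-b-map}. We read it as the standard condition $du_{p}(T_{p}M)+T_{u(p)}\partial N=T_{u(p)}N$ at every $p\in u^{-1}(\partial N)$, which is equivalent to $d(r\circ u)_{p}\neq0$. With this reading both directions reduce to the linear-algebra fact that a hyperplane plus a subspace not contained in it is the whole space. No analytic difficulty arises.
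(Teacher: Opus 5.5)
Your proof is correct and complete. The identity $\{u^{*}r=0\}=u^{-1}(\partial N)$ takes care of the boundary-to-boundary and interior-to-interior conditions, and because $\ker dr_{q}=T_{q}\partial N$ is a hyperplane, transversality at a boundary point is equivalent to $d(u^{*}r)_{p}\neq0$; this settles both directions, and indeed for every boundary defining function. The paper does not prove the lemma itself; it cites Definition 15 and Remark 16 of \cite{UsulaBiharmonic}, and your argument is the standard elementary verification of that equivalence.
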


To check that $u_{\gamma,\rho,\ext,k,\mathrm{NN}}$ is indeed a simple
$b$-map, observe that $X$ is a local boundary defining function
for $\overline{\HH}^{n+1}$, defined away from the north pole; since
the image of $u_{\gamma,\rho,\ext,k,\mathrm{NN}}$ does not contain
the north pole, the pull-back of $X$ is $\rho\exp\left(\mathrm{NN}^{X}\right)$; since $\rho$ is by construction a boundary defining function for $D^{2}$,
the function $\rho\exp\left(\mathrm{NN}^{X}\right)$ is always a boundary
defining function for $D^{2}$ as well. Therefore, $u_{\gamma,\rho,\ext,k,\mathrm{NN}}$
is indeed a simple $b$-map $D^{2}\to\overline{\HH}^{n+1}$.

Note that the equation $\rho=0$ defines the boundary $\partial D^{2}=S^{1}$,
so the boundary restriction of $u_{\gamma,\rho,\ext,k,\mathrm{NN}}$
is precisely the embedding $\gamma:S^{1}\to\mathbb{R}^{n}$. In our
main application, we have $n=3$, and $\gamma:S^{1}\to\mathbb{R}^{3}=S^{3}\backslash\left\{ N\right\} $
is an explicit parametrisation of an oriented knot. Thanks to this
choice, our model does not need to learn how to satisfy the boundary
condition; it is already satisfied \emph{exactly}, regardless of the
values of the learnable parameters.

\subsubsection{\label{subsec:The-hyperparameter-choice}The hyperparameter choice}

We now discuss our choice of the hyperparameters $\gamma$, $\rho$,
$\ext$, $k$. Since we are mainly interested in finding minimal p-immersions
into $\overline{\HH}^{4}$, we will focus on this case from now on.

As already discussed, $\gamma:S^{1}\to\mathbb{R}^{3}$ is our boundary
condition, namely a smooth parametrisation of an oriented knot $K\subset\mathbb{R}^{3}$.
By construction, each instance of the model $u_{\gamma,\rho,\ext,k,\mathrm{NN}}$
restricts to $\gamma$ on the boundary. The choices of $\rho$ and
$\ext$ are more delicate, while the choice of $k\in\left\{ 1,2\right\} $
is essentially dictated by the choice of $\ext$.

Briefly speaking, $\rho$ and $\ext$ determine a ``starting point''
for our search towards a minimal p-immersion. More precisely, if we
set all the learnable parameters of $\mathrm{NN}$ to zero, the map
$u_{\gamma,\rho,\ext,k,\mathrm{NN}}$ reduces to 
\begin{align*}
D^{2} & \to\overline{\HH}^{4}\\
\left(x,y\right) & =\left(\rho,\ext\left(\gamma\right)\right)\left(x,y\right).
\end{align*}
Therefore, we can think of $\mathrm{NN}$ as a \emph{learnable perturbation}
of the map $\left(\rho,\ext\left(\gamma\right)\right)$; the training
algorithm is designed to learn an instance of $\mathrm{NN}$ which
perturbs $\left(\rho,\ext\left(\gamma\right)\right)$ to a minimal
(or more precisely, \emph{as minimal as possible} in the sense specified
in \S\ref{subsec:The-training-regime}) p-immersion. For this reason,
it is quite important to choose $\rho$ and $\ext$ appropriately:
indeed, if $\left(\rho,\ext\left(\gamma\right)\right)$ is already
``asymptotically minimal'', this facilitates the convergence of
the algorithm.

Let us first note that the choice of $\rho$ is less important than
the choice of $\ext$. Indeed, two boundary defining functions $\rho$
and $\tilde{\rho}$ differ only by a smooth, strictly positive multiplicative
factor. For this reason, in principle different choices of $\rho$
can be absorbed by the term $\exp\left(\mathrm{NN}^{X}\right)$, which
is learnable. Of course, in practice the choice of $\rho$ still plays
a role, because different choices determine different starting points
$\left(\rho,\ext\left(\gamma\right)\right)$ for the training algorithm.
A natural choice for $\rho$ is the boundary defining function $\rho=1-r^{2}$,
where $r=\sqrt{x^{2}+y^{2}}$. Another natural choice, which experimentally
gives better results, is given by 
\[
\rho_{\mathrm{st}}:=\frac{1-r^{2}}{1+r^{2}}.
\]
We call $\rho_{\mathrm{st}}$ the \emph{stereographic} boundary defining
function. This function arises from the simplest example of a minimal
p-embedded disc in $\overline{\HH}^{4}$. Indeed, consider the round
parametrisation of the unknot, 

\[
\gamma_{\mathrm{unknot}}\left(\theta\right)=\left(\cos\theta,\sin\theta,0\right).
\]

It is easy to check that the map $u:D^{2}\to\overline{\HH}^{4}$ written
in polar coordinates as 

\[
u_{\HH^{2}}\left(\theta,r\right)=\left(\frac{1-r^{2}}{1+r^{2}},\frac{2r\cos\theta}{1+r^{2}},\frac{2r\sin\theta}{1+r^{2}},0\right)
\]

is a \emph{totally geodesic} (hence minimal) p-embedding of $D^{2}$
in $\overline{\HH}^{4}$. In fact, the pull-back metric via this map
is precisely the hyperbolic metric on $\overline{\HH}^{2}$. We see
that in this example, the first component of the map is precisely
the stereographic\footnote{We chose the name \emph{stereographic} because the first three components
of the map $u\left(\theta,r\right)$ represent the inverse of the
stereographic projection from $D^{2}$ to the upper hemisphere of
the unit sphere $S^{2}$ in $\mathbb{R}^{3}$.} boundary defining function $\rho_{\mathrm{st}}$.

The most important hyperparameter choice is the extension operator
$\ext$. As discussed above, the role of $\ext$ is to extend the
chosen parametrised knot $\gamma:S^{1}\to\mathbb{R}^{3}$ to a map
$\ext\left(\gamma\right):D^{2}\to\mathbb{R}^{3}$. We first describe
a na{\"i}ve extension operator: 
\begin{defn}
We define the \emph{stereographic extension operator} as 
\[
\ext_{\mathrm{st}}\left(\gamma\right):\left(r,\theta\right)\mapsto\frac{2r}{1+r^{2}}\gamma\left(\theta\right).
\]
\end{defn}

This definition is justified by the observation that the totally geodesic
p-embedding $u_{\HH^{2}}:\overline{\HH}^{2}\to\overline{\HH}^{4}$
described above can be written as 
\[
u_{\HH^{2}}=\left(\rho_{\mathrm{st}},\ext_{\mathrm{st}}\left(\gamma_{\mathrm{unknot}}\right)\right).
\]
In other words, if we choose $\gamma=\gamma_{\mathrm{unknot}}$, $\rho=\rho_{\mathrm{st}}$,
and $\ext=\ext_{\mathrm{st}}$, then \emph{the instance of $u_{\gamma,\rho,\ext,k,\mathrm{NN}}$
obtained by setting all the learnable parameters of $\mathrm{NN}$
to zero is precisely the totally geodesic solution $u_{\HH^{2}}$}.
The problem with this na{\"i}ve approach is that, for a generic parametrised
knot $\gamma:S^{1}\to\mathbb{R}^{3}$, the extended map $\ext_{\mathrm{st}}\left(\gamma\right):D^{2}\to\mathbb{R}^{3}$
is only $C^{0}$. Since $\mathrm{NN}$ is by design smooth, this extension
choice \emph{cannot} produce smooth maps $D^{2}\to\overline{\HH}^{4}$
unless $\ext\left(\gamma\right)$ is smooth. This observation is verified
by computational experiments: if we run our algorithm for a generic
knot using the extension operator $\ext_{\mathrm{st}}$, we obtain
maps which look minimal near the boundary, but exhibit a
concentrated error near the origin of $D^{2}$.

We obtain much better results using an ansatz inspired by another
feature of \emph{$u_{\HH^{2}}$}. First of all, observe that every
smooth map $\gamma:S^{1}\to\mathbb{R}^{3}$ admits a unique smooth\emph{
harmonic} extension to $D^{2}$, namely there is a unique smooth solution
$\Gamma:D^{2}\to\mathbb{R}^{3}$ of the boundary value problem 
\[
\begin{cases}
\Delta\Gamma & =0\\
\Gamma_{|S^{1}} & =\gamma
\end{cases}.
\]
Now, consider again the parametrisation of the unknot $\gamma_{\mathrm{unknot}}:S^{1}\to\mathbb{R}^{3}$.
It is easy to check that the unique harmonic extension of this map
is 
\[
\Gamma_{\mathrm{unknot}}:\left(r,\theta\right)\mapsto\left(r\cos\theta,r\sin\theta,0\right).
\]
Therefore, the totally geodesic embedding $u_{\HH^{2}}:\overline{\HH}^{2}\to\overline{\HH}^{4}$
can be written as 
\[
u\left(\theta,r\right)=\left(\frac{1-r^{2}}{1+r^{2}},\frac{2\Gamma_{\mathrm{unknot}}}{1+r^{2}}\right).
\]
This leads us to the following definition: 
\begin{defn}
Let $\gamma:S^{1}\to\mathbb{R}^{3}$ be a smooth map. We define the
\emph{stereoharmonic extension} $\ext_{\mathrm{st}-\mathrm{h}}\left(\gamma\right):D^{2}\to\mathbb{R}^{3}$
of $\gamma$ as 
\[
\ext_{\mathrm{st}-\mathrm{h}}\left(\gamma\right):=\frac{2\Gamma}{1+r^{2}},
\]
where $\Gamma$ is the unique harmonic extension $D^{2}\to\mathbb{R}^{3}$
of $\gamma$. 
\end{defn}

If $\gamma=\gamma_{\mathrm{unknot}}$, $\rho=\rho_{\mathrm{st}}$,
and $\ext=\ext_{\mathrm{st}}$, then we have again 
\[
u_{\HH^{2}}=\left(\rho_{\mathrm{st}},\ext_{\mathrm{st}-\mathrm{h}}\left(\gamma_{\mathrm{unknot}}\right)\right).
\]
However, this time, the initial approximation is smooth for \emph{every
}smooth parametrisation $\gamma$ of an oriented knot. Therefore,
we can reasonably expect to be able find near-minimal p-immersions
bounding knots different from the unknot.

While the stereoharmonic extension brings much better results than
the na{\"i}ve stereographic extension, it has an important drawback. Recall
from Theorem \ref{thm:jared} that a minimal p-submanifold of $\overline{\HH}^{n+1}$
must intersect the boundary at infinity \emph{orthogonally}. In fact,
a p-submanifold $\Sigma\subset\overline{\HH}^{n+1}$ is orthogonal
to the boundary if and only if $\Sigma$ is \emph{asymptotically minimal}.
More precisely, let $\iota:\Sigma\to\overline{\HH}^{n+1}$ be the
inclusion, which is by construction a p-embedding. As explained in
\S\ref{subsec:Conformally-compact-manifolds}, the tension field
$\tau\left(\iota\right)=\tr_{\iota^{*}g_{\mathrm{hyp}}}\nabla d\iota$
is a vector field on $\overline{\HH}^{4}$ along $\Sigma$, vanishing
along the boundary of $\Sigma$. In other words, it is a smooth section
of $\iota^{*}{^{0}T\overline{\HH}^{4}}$. This bundle is equipped
with a metric, namely the pull-back of the hyperbolic metric seen
as a smooth metric on $^{0}T\overline{\HH}^{4}$. The pointwise squared
length of the tension field $\left|\tau\left(\iota\right)\right|^{2}$
is therefore a smooth function on $\Sigma$, \emph{which does not
necessarily vanish along the boundary}; the fact that $\tau\left(\iota\right)$
vanishes along the boundary is counterbalanced exactly by the fact
that the hyperbolic metric \emph{blows up} along the boundary. It
is well-known that $\Sigma$ is orthogonal to the boundary if and
only if $\Sigma$ is \emph{asymptotically minimal}, that is, $\left|\tau\left(\iota\right)\right|^{2}$
vanishes identically along the boundary.

Now consider the simple $b$-maps produced by our model, 
\[
u_{\gamma,\rho,\ext,k,\mathrm{NN}}=\left(\rho\exp\left(\mathrm{NN}^{X}\right),\ext\left(\gamma\right)+\rho^{k}\mathrm{NN}^{\boldsymbol{Y}}\right).
\]
Since $\gamma:S^{1}\to\mathbb{R}^{3}$ is an embedding, $u_{\gamma,\rho,\ext,k,\mathrm{NN}}$
is locally a p-embedding near the boundary of $\partial D^{2}$. However,
it is not guaranteed that $u_{\gamma,\rho,\ext,k,\mathrm{NN}}\left(D^{2}\right)$
intersects the boundary of $\overline{\HH}^{4}$ orthogonally. We can design the $\ext$ operator in order to make this happen:

\begin{lem}
\label{lem:normal-derivative}Let $\gamma:S^{1}\to\mathbb{R}^{n}$
be an embedding. Consider a simple $b$-map 
\begin{align*}
u:D^{2} & \to\mathbb{R}_{\geq0}\times\mathbb{R}^{n}\\
u & =\left(\rho,v\right)
\end{align*}
where $\rho$ is a boundary defining function for $D^{2}$ and $v:D^{2}\to\mathbb{R}^{n}$
is a smooth map such that $v_{|S^{1}}=\gamma$. Then the image $u\left(D^{2}\right)$,
which is an embedded p-submanifold of $\mathbb{R}_{\geq0}\times\mathbb{R}^{n}$
in a sufficiently small neighbourhood $[0,\varepsilon)\times\mathbb{R}^{n}$
of the boundary, is orthogonal to the boundary $\mathbb{R}^{n}$ if
and only if the normal derivative of $v$ along $S^{1}$ is tangent
to the curve $\gamma\left(S^{1}\right)\subset\mathbb{R}^{n}$. 
\end{lem}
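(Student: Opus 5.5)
The plan is to compute the tangent plane of $u(D^{2})$ at a boundary point directly and compare it with the normal direction $\partial_{X}$ of the boundary $\{X=0\}$. Orthogonality only depends on the conformal class of the metric, so it can be tested with the Euclidean metric $dX^{2}+|d\boldsymbol{Y}|^{2}$, which extends smoothly to the boundary. Concretely, "$u(D^{2})$ is orthogonal to $\mathbb{R}^{n}$ at $u(p)$", for $p\in S^{1}$, means that the tangent plane $T_{u(p)}u(D^{2})$ contains the unit normal $\partial_{X}$. Since $u$ is an immersion near $S^{1}$, this plane is the image $du_{p}(T_{p}D^{2})$.

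First I fix local coordinates near a boundary point: $\theta$ along $S^{1}$, and $\rho$ itself as the transverse coordinate. This is legitimate because $d\rho\neq0$ on $S^{1}$, so $(\theta,\rho)$ are coordinates on a collar, with $\partial_{\theta}$ tangent to $S^{1}$. In these coordinates
\[
du_{p}(\partial_{\theta})=\bigl(0,\gamma'(\theta)\bigr),\qquad du_{p}(\partial_{\rho})=\bigl(1,\partial_{\rho}v(p)\bigr),
\]
because $\rho\circ u$ restricted to $S^{1}$ is zero and $v|_{S^{1}}=\gamma$. Since $\gamma'\neq0$, these two vectors are independent, which confirms that $u$ is an immersion, and hence a p-embedding, near the boundary.

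The plane they span contains $\partial_{X}=(1,0)$ if and only if $(1,\partial_{\rho}v)-(1,0)=(0,\partial_{\rho}v)$ lies in the span. A vector of the form $(0,w)$ lies in $\mathrm{span}\{(0,\gamma'),(1,\partial_{\rho}v)\}$ only if its coefficient on the second generator vanishes, that is, only if $w\in\mathrm{span}\{\gamma'\}$. So orthogonality at $p$ is equivalent to $\partial_{\rho}v(p)$ being parallel to $\gamma'(\theta)$, i.e. tangent to the curve $\gamma(S^{1})$.

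The one subtle point is that $\partial_{\rho}$ in the collar coordinates need not be the geometric normal derivative $\partial_{r}$ (or $\partial_{\nu}$) on the disc. I handle it as follows. Any transverse vector field $N$ on $D^{2}$ at $p$ can be written as $N=a\,\partial_{\rho}+b\,\partial_{\theta}$ with $a\neq0$. Then
\[
N v=a\,\partial_{\rho}v+b\,\gamma',
\]
and this is parallel to $\gamma'$ if and only if $\partial_{\rho}v$ is. Hence the condition does not depend on which transverse direction is chosen, and in particular it can be stated with the usual normal derivative. Running this argument at every $p\in S^{1}$ gives the lemma. I expect no real obstacle; the only care needed is this coordinate independence and the justification that orthogonality can be checked with the compactified (Euclidean) metric.
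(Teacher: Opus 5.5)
Your proof is correct and is essentially the paper's argument. Both compute $du_p$ on a tangent and a transverse vector at $p\in S^1$, note that $du_p(\partial_\theta)=(0,\dot\gamma)$ while the transverse image has nonzero $\partial_X$-component, and conclude with the same linear algebra. The only difference is cosmetic: the paper uses $\partial_r$ directly, getting $\partial_r\rho=-c<0$ from $\rho=(1-r^2)e^{\varphi}$. You instead work in collar coordinates $(\theta,\rho)$ and then check that the condition does not depend on the choice of transverse vector.
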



\begin{proof}
Call again $X,\boldsymbol{Y}$ the coordinates on the half-space $\mathbb{R}_{\geq0}\times\mathbb{R}^{n}$.
Call $\partial_{\theta}$ the canonical counterclockwise unit vector
field on $D^{2}$ along $S^{1}$ tangent to the boundary, and call
$\partial_{r}$ the unit vector field on $D^{2}$ along $S^{1}$ normal
and outward-pointing. $u\left(D^{2}\right)$ is orthogonal to the
boundary $X=0$ if and only if, for every $p\in S^{1}$, the space
$du_{p}\left(T_{p}D^{2}\right)=\mathrm{span}\left(\left\{ du_{p}\left(\partial_{r}\right),du_{p}\left(\partial_{\theta}\right)\right\} \right)$
contains the normal vector $\partial_{X}$. By construction, we have
$du_{p}\left(\partial_{r}\right)=\left(\partial_{r}\rho\right)\partial_{X}+\partial_{r}v$
and $du_{p}\left(\partial_{\theta}\right)=\left(\partial_{\theta}\rho\right)\partial_{X}+\partial_{\theta}v$.
Since $\rho$ is a boundary defining function, $\rho=0$ along the
boundary of $D^{2}$, and therefore $\partial_{\theta}\rho\equiv0$.
On the other hand, since $\rho=\left(1-r^{2}\right)e^{\varphi}$ for
some smooth function $\varphi$, we have on the boundary $\partial_{r}\rho=-c$ for
some strictly positive function $c$. Moreover, since $v_{|S^{1}}=\gamma$, we have $\partial_{\theta}v=\dot{\gamma}\left(p\right)$.
Summarising, 
\[
du_{p}\left(T_{p}D^{2}\right)=\mathrm{span}\left(-c\partial_{X}+\partial_{r}v,\dot{\gamma}\left(p\right)\right).
\]
Therefore the only possibility for which $\partial_{X}\in du_{p}\left(T_{p}D^{2}\right)$
is that either $\partial_{r}v=0$ or $\partial_{r}v=\mu\dot{\gamma}\left(p\right)$
for some non-zero $\mu$. 
\end{proof}
We now apply this lemma to our case. If we want the image of the simple
$b$-map 
\[
u_{\gamma,\rho,\ext,k,\mathrm{NN}}=\left(\rho\exp\left(\mathrm{NN}^{X}\right),\ext\left(\gamma\right)+\rho^{k}\mathrm{NN}^{\boldsymbol{Y}}\right)
\]
to be orthogonal\footnote{Note that orthogonality to the boundary of $\overline{\HH}^{4}$ is
equivalent, in half-space coordinates, to orthogonality to the boundary
with respect to the Euclidean metric.} to the boundary of $\overline{\HH}^{4}$, we need $\partial_{r}\left(\ext\left(\gamma\right)+\rho^{k}\mathrm{NN}^{\boldsymbol{Y}}\right)$
to be tangent to $\gamma$. Observe that $\partial_{r}\left(\rho^{k}\right)=k\rho^{k-1}\partial_{r}\rho$,
so if $k>1$ the normal derivative above reduces to the condition
that $\partial_{r}\left(\ext\left(\gamma\right)\right)$ is proportional
to the velocity vector $\dot{\gamma}$.

This leads us to our refined ansatz. Recall that
a \emph{biharmonic function} on $D^{2}$ is a solution of the PDE
$\Delta^{2}f=0$. Clearly, harmonic functions are biharmonic; however,
the PDE $\Delta^{2}f=0$ is of fourth order, and one can find a unique
solution $f$ subject to \emph{both} Dirichlet and Neumann boundary
conditions. More precisely, given two smooth functions $\gamma,\delta:S^{1}\to\mathbb{R}^{n}$,
there exists a unique smooth solution $\Gamma:D^{2}\to\mathbb{R}^{n}$
of the boundary value problem 
\[
\begin{cases}
\Delta^{2}\Gamma & =0\\
\Gamma_{|S^{1}} & =\gamma\\
\partial_{r}\Gamma_{|S^{1}} & =\delta
\end{cases}.
\]

\begin{defn}
Let $\gamma:S^{1}\to\mathbb{R}^{3}$ be a smooth embedding. We define
the \emph{stereobiharmonic extension} $\ext_{\mathrm{st}-\mathrm{bh}}\left(\gamma\right):D^{2}\to\mathbb{R}^{3}$
of $\gamma$ as 
\[
\ext_{\mathrm{st}-\mathrm{bh}}\left(\gamma\right):=\frac{2\Gamma}{1+r^{2}},
\]
where $\Gamma$ is the unique biharmonic extension $D^{2}\to\mathbb{R}^{3}$
of $\gamma$ which satisfies $\partial_{r}\Gamma_{|S^{1}}=\gamma$. 
\end{defn}

This definition is justified by the following 
\begin{prop}
Let $\gamma:S^{1}\to\mathbb{R}^{3}$ be an embedding. Choose $\ext=\ext_{\mathrm{st}-\mathrm{bh}}$
and $k=2$. Then every instance of the model $u_{\gamma,\rho,\ext,k,\mathrm{NN}}$
(for any choice of the boundary defining function $\rho$) defines,
near the boundary of $\overline{\HH}^{4}$, a p-embedded surface orthogonal
to the boundary. 
\end{prop}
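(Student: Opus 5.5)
The plan is to reduce the statement directly to Lemma \ref{lem:normal-derivative}, applied with the boundary defining function $\tilde\rho:=\rho\exp\left(\mathrm{NN}^{X}\right)$ and with $v:=\ext_{\mathrm{st}-\mathrm{bh}}\left(\gamma\right)+\rho^{2}\mathrm{NN}^{\boldsymbol{Y}}$. As already observed after the lemma on simple $b$-maps, $\tilde\rho$ is a boundary defining function for $D^{2}$ for every value of the learnable parameters. It then remains to check three things:
\begin{enumerate}
\item $v_{|S^{1}}=\gamma$;
\item the normal derivative $\partial_{r}v$ along $S^{1}$ is tangent to $\gamma$;
\item $u$ is a genuine p-embedding on a neighbourhood of $\partial D^{2}$.
\end{enumerate}

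For the boundary value, at $r=1$ we have $2\Gamma/(1+r^{2})=\Gamma_{|S^{1}}=\gamma$, and $\rho^{2}\mathrm{NN}^{\boldsymbol{Y}}$ vanishes there. For the normal derivative I would compute directly at $r=1$:
\[
\partial_{r}\left(\frac{2\Gamma}{1+r^{2}}\right)\Big|_{r=1}=\left(\frac{2\partial_{r}\Gamma}{1+r^{2}}-\frac{4r\Gamma}{(1+r^{2})^{2}}\right)\Big|_{r=1}=\partial_{r}\Gamma_{|S^{1}}-\Gamma_{|S^{1}}=\gamma-\gamma=0 .
\]
This is exactly where the Neumann condition $\partial_{r}\Gamma_{|S^{1}}=\gamma$ in the definition of the stereobiharmonic extension enters. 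Moreover, since $k=2$,
\[
\partial_{r}\left(\rho^{2}\mathrm{NN}^{\boldsymbol{Y}}\right)=2\rho\,(\partial_{r}\rho)\,\mathrm{NN}^{\boldsymbol{Y}}+\rho^{2}\partial_{r}\mathrm{NN}^{\boldsymbol{Y}},
\]
which vanishes on $S^{1}$ because $\rho=0$ there. Hence $\partial_{r}v_{|S^{1}}=0$, which is the degenerate (and admissible) case of Lemma \ref{lem:normal-derivative}. Tangency therefore holds trivially, independently of $\rho$ and of $\mathrm{NN}$.

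The step I expect to need the most care is the third one: the lemma presupposes that $u(D^{2})$ is an embedded p-submanifold near the boundary, and this should be justified rather than assumed. First, $u$ is an immersion at each $p\in S^{1}$. By the computation in the proof of Lemma \ref{lem:normal-derivative},
\[
du_{p}(\partial_{r})=-c\,\partial_{X}\quad\text{with } c>0, \qquad du_{p}(\partial_{\theta})=\dot\gamma(p)\neq0,
\]
and these two vectors are linearly independent. Since being an immersion is an open condition, $u$ is an immersion on a collar $\{1-\delta<r\le1\}$. Second, $u$ is injective on a possibly smaller collar. I would argue by contradiction: suppose there are sequences $p_{j}\neq q_{j}$ with $u(p_{j})=u(q_{j})$ and $r(p_{j}),r(q_{j})\to1$. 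Passing to subsequences, both converge to boundary points, and the limits coincide because $\gamma$ is injective. Then $p_{j}$ and $q_{j}$ eventually lie in a neighbourhood of that common limit on which $u$ is injective, by the inverse function theorem for the immersion $u$ of a manifold with boundary (transverse to $X=0$). This contradicts $p_{j}\neq q_{j}$.

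Combining the three facts, $u$ restricted to the collar is a p-embedding, Lemma \ref{lem:normal-derivative} applies, and the image meets $\{X=0\}$ orthogonally. By the footnote on Euclidean versus hyperbolic orthogonality in half-space coordinates, the image is therefore orthogonal to the boundary of $\overline{\HH}^{4}$.
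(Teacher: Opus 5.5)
Your proposal is correct and follows the paper's proof. The key step is the same computation
\[\partial_r\bigl(\tfrac{2\Gamma}{1+r^2}+\rho^2\mathrm{NN}^{\boldsymbol{Y}}\bigr)_{|r=1}=\partial_r\Gamma_{|S^1}-\Gamma_{|S^1}=\gamma-\gamma=0,\]
followed by an appeal to Lemma \ref{lem:normal-derivative}; your extra argument (immersion along $S^1$, and injectivity on a collar via compactness and injectivity of $\gamma$) only fills in the p-embeddedness near the boundary, which the paper takes for granted in the statement of that lemma.
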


\begin{proof}
The $\boldsymbol{Y}$-component of $u_{\gamma,\rho,\ext,k,\mathrm{NN}}$
is $\frac{2\Gamma}{1+r^{2}}+\rho^{2}\mathrm{NN}^{\boldsymbol{Y}}$.
Taking the $\partial_{r}$ derivative and observing that $\partial_{r}\rho^{2}=\left(2\rho\right)\partial_{r}\rho$
vanishes along the boundary, we obtain along the boundary 
\begin{align*}
\partial_{r}\left(\frac{2\Gamma}{1+r^{2}}+\rho^{2}\mathrm{NN}^{\boldsymbol{Y}}\right) & =\left(\partial_{r}\left(\frac{2}{1+r^{2}}\right)\Gamma+\frac{2}{1+r^{2}}\partial_{r}\Gamma\right)_{|r=1}\\
 & =\left(-\frac{4r}{\left(1+r^{2}\right)^{2}}\Gamma+\frac{2}{1+r^{2}}\partial_{r}\Gamma\right)_{|r=1}\\
 & =-\gamma+\gamma\\
 & =0.
\end{align*}
The orthogonality claim then follows from Lemma \ref{lem:normal-derivative}. 
\end{proof}
The previous proposition clarifies how to choose the hyperparameter
$k$, namely the order of decay of the factor $\rho^{k}$ in the definition
of $u_{\gamma,\rho,\ext,k,\mathrm{NN}}$: if the chosen extension
operator $\ext$ does \emph{not} guarantee the image of $u_{\gamma,\rho,\ext,k,\mathrm{NN}}$
to be orthogonal to the boundary, we should choose $k=1$ in order
to allow $\mathrm{NN}$ to potentially correct for this; if the chosen
extension operator does guarantee orthogonality, we can choose $k=2$.
In practice, we experimentally noted that using the stereobiharmonic
extension operator $\ext_{\mathrm{st}-\mathrm{bh}}$ paired with the
stereographic boundary defining function $\rho_{\mathrm{st}}$ and
the decay exponent $k=2$ provides the best results.

To conclude, consider again the totally geodesic example 
\[
u_{\HH^{2}}\left(\theta,r\right)=\left(\frac{1-r^{2}}{1+r^{2}},\frac{2r\cos\theta}{1+r^{2}},\frac{2r\sin\theta}{1+r^{2}},0\right).
\]
The $\boldsymbol{Y}$-component of this map is 
\[
\frac{2}{1+r^{2}}\left(r\cos\theta,r\sin\theta,0\right)
\]
and 
\[
\partial_{r}\left(r\cos\theta,r\sin\theta,0\right)=\left(\cos\theta,\sin\theta,0\right)=\gamma.
\]
Since $\Gamma:\left(r,\theta\right)\mapsto\left(r\cos\theta,r\sin\theta,0\right)$
is harmonic, it is in particular biharmonic, and therefore it solves
the boundary value problem 
\[
\begin{cases}
\Delta^{2}\Gamma & =0\\
\Gamma_{|S^{1}} & =\gamma\\
\partial_{r}\Gamma_{|S^{1}} & =\gamma
\end{cases}.
\]
In other words, if we choose $\rho=\rho_{\mathrm{st}}$, $\ext=\ext_{\mathrm{st}-\mathrm{bh}}$,
$k=2$, and $\gamma=\left(\cos\theta,\sin\theta,0\right)$, then the
instance of the model $u_{\gamma,\rho,\ext,k,\mathrm{NN}}$ obtained
by setting all the parameters of $\mathrm{NN}$ to zero is again the
totally geodesic solution $u_{\HH^{2}}$.

\subsection{\label{subsec:The-training-regime}The training regime}

We now describe how the parameters $\theta$ of the network $\mathrm{NN}:\mathbb{R}^{2}\to\mathbb{R}^{4}$
are optimised so that the corresponding $b$-map $u_{\gamma,\rho,\ext,k,\mathrm{NN}}$
approximates a minimal p-immersion. Let us denote $u_{\gamma,\rho,\ext,k,\mathrm{NN}}$
by $u_{\theta}$ for simplicity in what follows. As explained in the
previous subsection, the boundary condition is satisfied exactly for
every $\theta$, so the training is governed entirely by the interior
PDE loss, namely 
\begin{equation}
\mathcal{L}(u_{\theta};\mathcal{D})\;=\;\frac{1}{N}\sum_{i=1}^{N}\bigl|\tau\left(u_{\theta}\right)(p_{i})\bigr|^{2},\label{eq:training_loss}
\end{equation}
where $\mathcal{D}:=\{p_{i}\}_{i=1}^{N}$ is a (pseudo-)random sample
of interior collocation points in $D^{2}$ drawn at the beginning
of training. As explained in \S\ref{subsec:Minimal-p-submanifolds-of-Hn},
since $u_{\theta}$ is a p-immersion near the boundary, $\tau\left(u_{\theta}\right)$
is a smooth section of $u_{\theta}^{*}{^{0}T\overline{\HH}^{4}}$;
the pointwise norm above is therefore computed with respect to the
hyperbolic metric, namely the bundle metric on $u_{\theta}^{*}{^{0}T\overline{\HH}^{4}}$.

Thanks to the simple form of the hyperbolic metric in half-space coordinates,
we can spell out in detail the expression of $\tau\left(u_{\theta}\right)$.
In general, a section $\nu$ of ${^{0}T\overline{\HH}^{4}}$ can be
written in half-space coordinates $\left(X,Y_{1},Y_{2},Y_{3}\right)$
as 
\[
\nu=\nu^{X}X\partial_{X}+\sum_{i=1}^{3}\nu^{Y_{i}}X\partial_{Y_{i}},
\]
where the $\nu^{X},\nu^{Y_{i}}$ are smooth functions. Since the vector
fields $X\partial_{X}$, $X\partial_{Y_{i}}$ are orthonormal with
respect to the hyperbolic metric, we have 
\[
\left|\nu\right|^{2}=\left(\nu^{X}\right)^{2}+\sum_{i=1}^{3}\left(\nu^{Y_{i}}\right)^{2}.
\]
Therefore, writing our tension field $\tau\left(u_{\theta}\right)$
as $\tau^{X}\left(u_{\theta}\right)X\partial_{X}+\sum_{i}\tau^{Y_{i}}\left(u_{\theta}\right)X\partial_{Y_{i}}$,
we have 
\[
\mathcal{L}\left(\theta\right)=\left(\tau^{X}\left(u_{\theta}\right)\right)^{2}+\sum_{i=1}^{3}\left(\tau^{Y_{i}}\left(u_{\theta}\right)\right)^{2}.
\]
Letting $u_{\theta}=(X_{\theta},\boldsymbol{Y}_{\theta})$, where
$X_{\theta}=\rho\exp(\mathrm{NN}^{X})\geq0$ and $\boldsymbol{Y}_{\theta}=(Y_{\theta,1},Y_{\theta,2},Y_{\theta,3})$,
elementary computations yield 
\begin{align}
\tau^{X}(u_{\theta}) & \;=\;\frac{1}{X_{\theta}}\Bigl[\Delta_{g}X_{\theta}+\frac{1}{X_{\theta}}\Bigl(\sum_{k=1}^{n}|dY_{\theta,k}|_{g}^{2}-|dX_{\theta}|_{g}^{2}\Bigr)\Bigr],\label{eq:tau_X}\\[6pt]
\tau^{Y_{k}}(u_{\theta}) & \;=\;\frac{1}{X_{\theta}}\Bigl[\Delta_{g}Y_{\theta,k}-\frac{2}{X_{\theta}}\,\langle dX_{\theta},\,dY_{\theta,k}\rangle_{g}\Bigr],\quad k=1,2,3.\label{eq:tau_Y}
\end{align}
Here $g$ is the pull-back of the hyperbolic metric, and $\Delta_{g}$
is the Laplace--Beltrami operator associated to $g$, acting on a
scalar function $f:D^{2}\to\mathbb{R}$ explicitly by 
\begin{equation}
\Delta_{g}f\;=\;g^{ab}\,\partial_{ab}^{2}f\;+\;\bigl(\partial_{a}g^{ab}\bigr)\partial_{b}f\;+\;g^{ab}\bigl(\partial_{a}\log\sqrt{\det g}\bigr)\partial_{b}f\,.\label{eq:LB}
\end{equation}

With this loss in place, the training pipeline consists of two sequential
phases: a stochastic gradient descent phase with Adam, and a full-batch
quasi-Newton refinement with L-BFGS. We describe each in turn, and
then record the concrete hyperparameter choices used in our experiments
at the end of this section.

The main training phase minimises $\mathcal{L}(\theta)$ in \eqref{eq:training_loss}
using the Adam optimiser \cite{kingma2014adam}, a stochastic first-order
method with adaptive per-coordinate learning rates. The learning rate
is annealed according to the cosine schedule 
\begin{equation}
\eta_{t}\;=\;\eta_{\min}+\tfrac{1}{2}(\eta_{0}-\eta_{\min})\Bigl(1+\cos\!\Bigl(\tfrac{\pi t}{T}\Bigr)\Bigr),\qquad t=0,1,\ldots,T,\label{eq:cosine_lr}
\end{equation}
where $\eta_{0}$ is the initial learning rate, $\eta_{\min}=10^{-2}\,\eta_{0}$
is the terminal learning rate, and $T$ is the total number of epochs.
At the beginning of this phase, we draw a batch of $N_{\mathrm{data}}$
interior collocation points, at which the PDE residual is evaluated.
These points are drawn by setting $r=U^{1/2}$, $\varphi=2\pi V$
with $U,V$ being uniform distributions on the interval $(0,1)$,
and returning $(r\cos\varphi,r\sin\varphi)$, yielding the standard
uniform distribution\footnote{The option to introduce a parametric exponent $b\in(0,1]$ controlling
the radial bias ($b=1/2$ recovers the uniform distribution, while
$b\to0$ concentrates mass near the boundary) is also available in
the codebase, but not used; as well as a more targeted distribution
which adds to the uniformly generated background points
a targeted sampling around a given point in $D^{2}$.} on $D^{2}$. Then, at each epoch, the collocation points are partitioned into mini-batches
of size $B$, and one Adam step per mini-batch is performed. After
every epoch the scheduler advances \eqref{eq:cosine_lr} and the epoch-average
loss is recorded; the parameter snapshot achieving the lowest epoch-average
loss over the entire run is retained and restored at the end of training.

Once Adam has brought the loss to a moderate level, we switch to a
full-batch quasi-Newton refinement using the limited-memory Broyden--Fletcher--Goldfarb--Shanno
method (L-BFGS) \cite{liu1989limited,nocedal1980updating}. Unlike
Adam, L-BFGS uses curvature information accumulated over the last
$m$ iterates to approximate the inverse Hessian of $\mathcal{L}$;
the resulting direction is a much better approximation to the Newton
step and can achieve superlinear convergence near a smooth minimiser.
The full-batch loss is evaluated on a new uniform sample of $N_{\mathrm{LBFGS}}$
interior collocation points, drawn once at the beginning of the L-BFGS
phase. The line search is the strong Wolfe criterion \cite{wolfe1969convergence},
which guarantees sufficient descent and curvature conditions at each
step. The iterations terminate when either the gradient norm falls
below $\delta_{g}=10^{-12}$, the parameter change falls below $\delta_{\theta}=10^{-14}$,
or the maximum number of steps $T_{\mathrm{LBFGS}}$ is reached.

All experiments are run in double precision (\texttt{float64}). The
PDE residual involves the Hessian of the map, so second-order derivatives
are computed by composing forward-mode and reverse-mode AD. In single
precision, the additional rounding in this composition causes the
residual to saturate at a level of approximately $10^{-4}$ even for
a near-exact solution; double precision brings this floor down to
$10^{-12}$, which is the effective gradient-norm tolerance we can
target for future improvements (cf. \S\ref{sec:Future_developments}).

The interior model $\mathrm{NN}:\mathbb{R}^{2}\to\mathbb{R}^{4}$
is an MLP in the sense of Definition \ref{def:MLP} with $\tanh$
activation functions, input dimension $d_{0}=2$, output dimension
$d_{L}=4$, depth $L$, and constant width $d_{1}=\cdots=d_{L-1}=W$.
The defaults used in our experiments are $L=4$ hidden layers and
$W=64$ units per layer, giving $(2 \times 64 + 64) + 3 \times (64 \times 64 + 64) + (64 \times 4 + 4) = 12932$ 
trainable parameters. We find that this architecture is sufficient
for the purpose of the investigation presented in this paper. For
reference, Table \ref{tab:hyperparams} collects the default hyperparameter
values used across all experiments reported in this paper. Departures
from these defaults are noted in the captions of the individual figures. The concrete implementation of the neural network and training strategy discussed above, in \texttt{torch}, is available at the \href{https://github.com/Tancredi-Schettini-Gherardini/deep_plateau.git}{GitHub repository}. We note that a full training (Adam and L-BFGS) takes roughly an hour on a personal laptop, running only on CPU.

\begin{table}[ht]
\centering 
\global\long\def\arraystretch{1.25}%
\begin{tabular}{llll}
\hline 
\textbf{Symbol}  & \textbf{Description}  & \textbf{Default value}  & \textbf{Phase} \tabularnewline
\hline 
$W$  & Hidden-layer width  & $64$  & All \tabularnewline
$L$  & Depth (number of hidden layers)  & $4$  & All \tabularnewline
$\sigma$  & Activation function  & $\tanh$  & All \tabularnewline
$N_{{\rm data}}$  & Collocation pool size  & $2^{14}$  & 1, 2 \tabularnewline
$B$  & Mini-batch size  & $2^{10}$  & 1\tabularnewline
$T_{2}$  & Adam epochs  & $10000$  & 1\tabularnewline
$\eta_{0}$  & Initial learning rate  & $10^{-3}$  & 1 \tabularnewline
$\eta_{\min}$  & Minimum learning rate  & $10^{-5}$  & 1\tabularnewline
$N_{{\rm LBFGS}}$  & L-BFGS collocation size  & $2^{14}$  & 2\tabularnewline
$T_{{\rm LBFGS}}$  & L-BFGS max iterations  & $10000$  & 2\tabularnewline
$m$  & L-BFGS history size  & $100$  & 2\tabularnewline
$\delta_{g}$  & Gradient-norm tolerance  & $10^{-12}$  & 2\tabularnewline
$\delta_{\theta}$  & Parameter-change tolerance  & $10^{-14}$  & 2\tabularnewline
\hline 
\end{tabular}\caption{Default hyperparameter values used in all experiments. All experiments
run in double precision (\texttt{float64}).}
\label{tab:hyperparams} 
\end{table}

\subsection{\label{subsec:Double-point-analysis}Double point analysis}

As explained in \S\ref{subsec:Fine's-conjecture}, for a generic
knot $K\subset S^{3}$ the p-immersed minimal surfaces in $\overline{\HH}^{4}$
bounding $K$ intersect themselves transversely at a finite number
of double points, all contained in the interior. Now, our trained model instances produce parametrised, near-minimal,
p-immersions $u_{\theta}:D^{2}\to\overline{\HH}^{4}$. In this subsection,
we explain how to detect the self-intersections of $u_{\theta}\left(D^{2}\right)$
and compute the self-intersection number.

The method is conceptually simple. Since our model produces maps from
$D^{2}$ to the half-space model of hyperbolic $4$-space, we can
think of them as immersions $D^{2}\to\mathbb{R}^{4}$. Now, consider
a generic immersion $u:D^{2}\to\mathbb{R}^{4}$ such that $u\left(D^{2}\right)$
self-intersects transversely at a finite number of double points.
Then the pairs $\left(p_{1},p_{2}\right)\in D^{2}\times D^{2}$ mapped
by $u$ to double points are exactly the zeroes of the map 
\begin{align*}
F_{u}:D^{2}\times D^{2} & \to\mathbb{R}^{4}\\
\left(p_{1},p_{2}\right) & \mapsto u\left(p_{1}\right)-u\left(p_{2}\right)
\end{align*}
away from the diagonal $\Delta\subset D^{2}\times D^{2}$. We can
then frame the problem of finding double points as the problem of
solving the equation $F_{u}\left(p_{1},p_{2}\right)=0$; this can
easily be done by Newton's method. However, this method requires an
`initial guess', and very different initial guesses may (and should)
lead to different double points. Therefore, we divide the double point
analysis in two phases: \emph{candidate generation} and \emph{refinement
to true solutions} (up to machine precision).

\subsubsection{Candidate generation}

Our method to generate candidate double points relies on the notion
of \emph{self-proximity map}: 
\begin{defn}
\label{def:self-proximity-map}The \emph{self-proximity map} of $u:D^{2}\to\mathbb{R}^{4}$
at threshold $\varepsilon>0$ is the map 
\begin{align*}
\mu_{\varepsilon}:D^{2} & \to[0,+\infty)\\
p & \mapsto\inf_{\begin{matrix}p'\in D^{2}\\
\left|\left|p-p'\right|\right|>\varepsilon
\end{matrix}}\left|\left|u\left(p\right)-u\left(p'\right)\right|\right|.
\end{align*}
\end{defn}


\begin{rem}
For simplicity, we use the flat metrics on both $D^{2}$
and $\mathbb{R}^{4}$. 
\end{rem}

By construction, the self-proximity map is very small at points where
the surface is very close to itself. In particular, two points $p_{1},p_{2}\in D^{2}$
with $\left|\left|p_{1}-p_{2}\right|\right|>\varepsilon$ satisfy
$u\left(p_{1}\right)=u\left(p_{2}\right)$ if and only if $\mu_{\varepsilon}\left(p_{1}\right)=\mu_{\varepsilon}\left(p_{2}\right)=0$.
Concretely, if all the double points of $u:D^{2}\to\mathbb{R}^{4}$
have pre-image pair $\left(p_{1},p_{2}\right)$ satisfying $\left|\left|p_{1}-p_{2}\right|\right|>\varepsilon$,
then a heat-map of the function $\mu_{\varepsilon}$ should exhibit
a finite number of isolated dark regions, one for each double point
pre-image. This is exactly what we see in our experiments (cf. for
example Figure \ref{fig:3_1}).

The algorithm to find candidate pairs $\left(p,p'\right)$ solving
$F_{u}\left(p,p'\right)=0$ is the following: 
\begin{enumerate}
\item generate a regularly spaced grid $\left\{ p_{i}\right\} $ of points
in the interior of $D^{2}$; 
\item choose a domain threshold $\varepsilon>0$ and a codomain threshold
$\tau>0$; 
\item iterate over the pairs $\left(p_{i},p_{j}\right)$, retaining as candidates
only those for which $\left|\left|p_{i}-p_{j}\right|\right|>\varepsilon$
and $\left|\left|u\left(p_{i}\right)-u\left(p_{j}\right)\right|\right|<\tau$. 
\end{enumerate}
The domain threshold $\varepsilon>0$ is needed in order to prevent
the algorithm from choosing candidates very close to the diagonal.
The codomain threshold $\tau>0$ is needed in order to prevent the
algorithm from generating too many candidates. Typically, the candidates
cluster together around the regions where $\mu_{\varepsilon}$ is
small (cf. Figure \ref{fig:3_1}).

\subsubsection{Refinement via Newton's method}

Now that we have a list of candidate double points $\left\{ \left(p_{i},p_{j}\right)\right\} $,
we simply run the Newton algorithm to solve $F_{u}\left(p,p'\right)=0$,
once per candidate. Recall that the convergence of the Newton method
is quadratic, so this procedure is computationally cheap. \emph{We
remark that the solutions found by the Newton method are not constrained
to be in the initial grid}. Typically, the Newton algorithm starting
at a candidate double point $\left(p_{i},p_{j}\right)$ converges
to a pair $\left(p,p^{*}\right)\in D^{2} \times D^{2}$ for which $u\left(p\right)\approx u\left(p^{*}\right)$
up to machine precision (the order of magnitude of $\left|\left|u\left(p\right)-u\left(p^{*}\right)\right|\right|$
is $10^{-16}$). As expected, many candidates converge to the same
double point, and therefore we identify solutions which agree up to
machine precision. 
\begin{rem}
Note that, a priori, the Newton algorithm could converge to a point
of the diagonal of $D^{2}\times D^{2}$; in practice, this never happens
in our experiments, because the candidates are well-separated, and
already close enough to a true double point. 
\end{rem}

In this phase, we also record the information necessary to determine
the \emph{sign} of the double point. First of all, recall that the
sign of a double point $\left(p_{1},p_{2}\right)$ of a generic immersion
$u:D^{2}\to\mathbb{R}^{4}$ depends on the choice of the orientation
of $\mathbb{R}^{4}$. We use the standard orientation on $\mathbb{R}^{4}$.
Using this convention, if $\left(p_{1},p_{2}\right)$ is the pre-image
pair of a double point of $u:D^{2}\to\overline{\HH}^{4}$, then its
sign coincides with the sign of the determinant of the Jacobian matrix
of $F_{u}$ at $\left(p_{1},p_{2}\right)$. The computation of the
determinant also functions as a sanity check for the transversality
of the self-intersection of $u\left(D^{2}\right)$ at the given double
point: if the intersection is truly transverse, then this determinant
should be significantly different from $0$ compared to machine
precision.

\section{\label{sec:Results}Results}

\begin{figure}[t]
\centering \begin{subfigure}[b]{0.24\textwidth} \centering \includegraphics[width=1\textwidth]{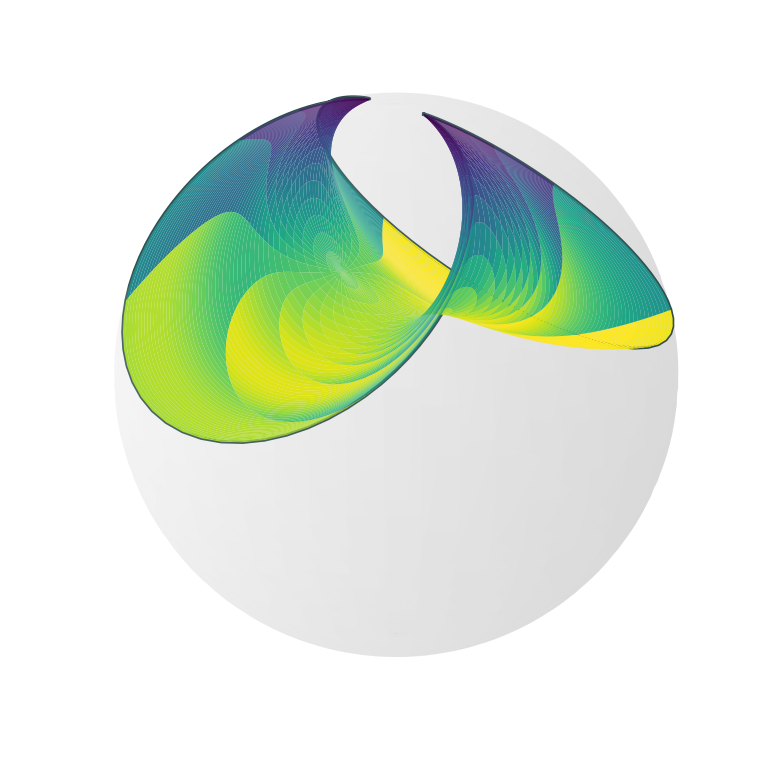}
\end{subfigure} \hfill{}\begin{subfigure}[b]{0.24\textwidth}
\centering \includegraphics[width=1\textwidth]{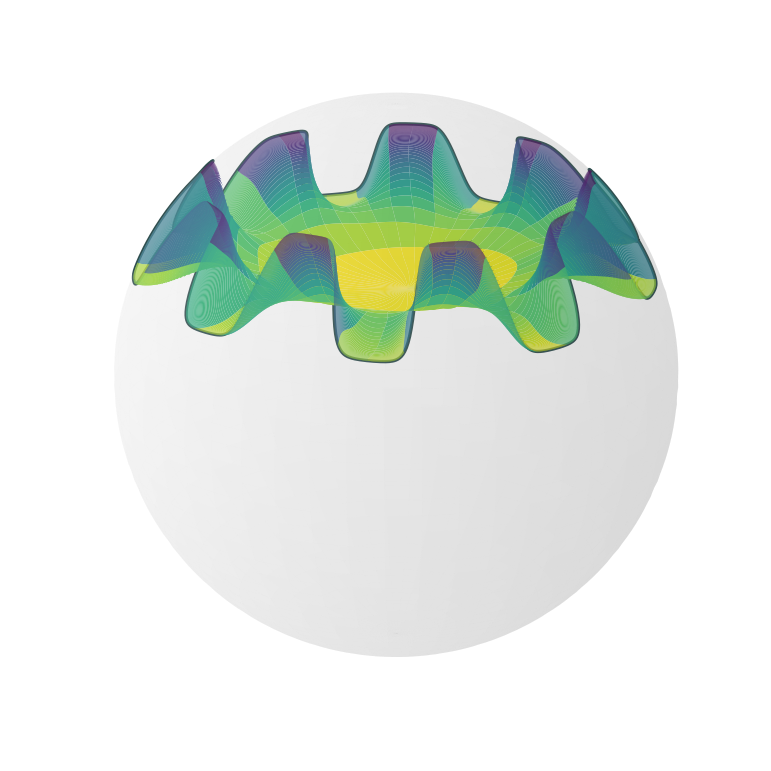}
\end{subfigure} \hfill{}\begin{subfigure}[b]{0.24\textwidth}
\centering \includegraphics[width=1\textwidth]{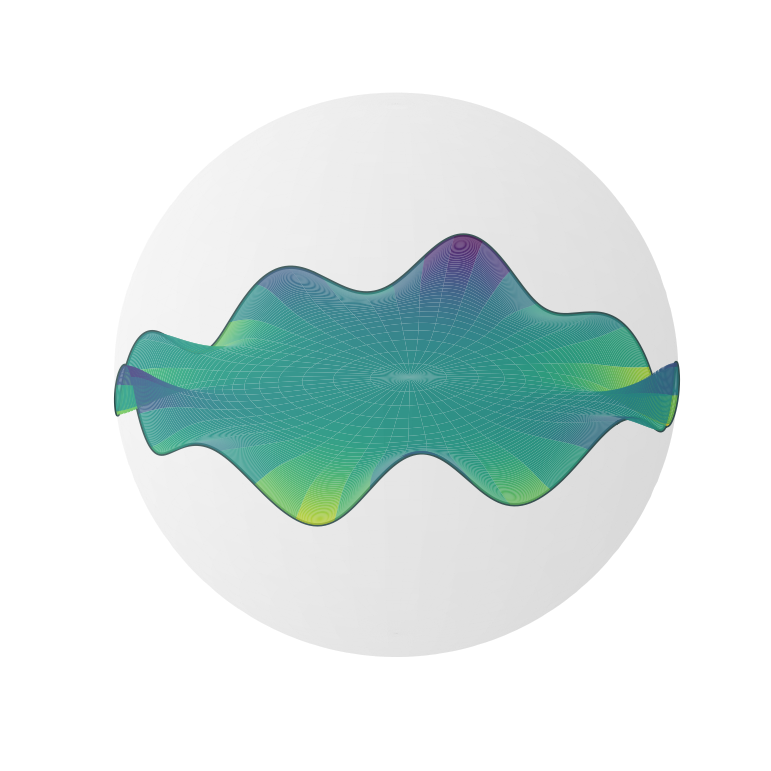}
\end{subfigure} \hfill{}\begin{subfigure}[b]{0.24\textwidth}
\centering \includegraphics[width=1\textwidth]{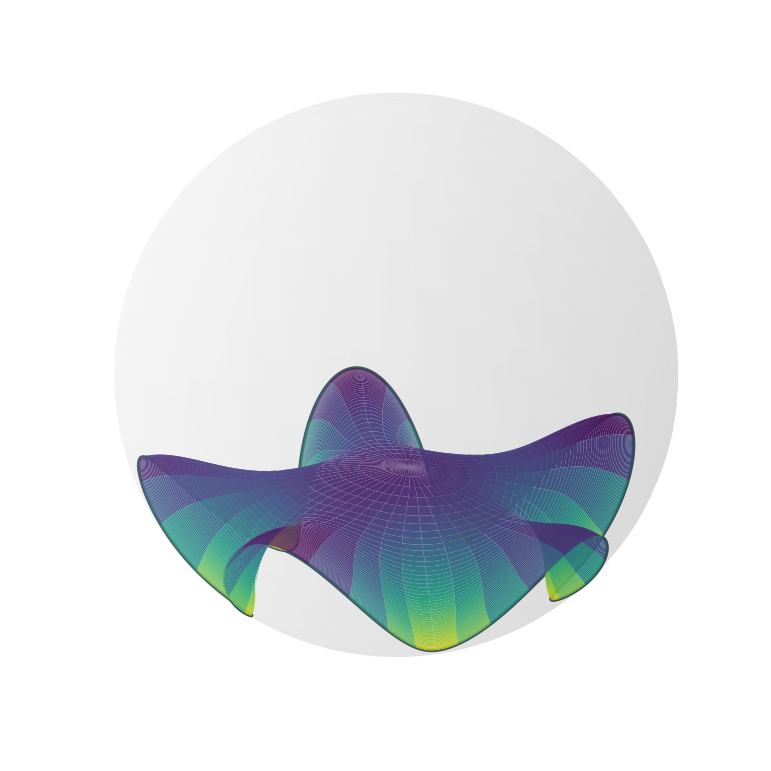}
\end{subfigure} \caption{Four minimal discs in $\HH^{3}$.}
\label{fig:minimal_discs_in_H3} 
\end{figure}

\begin{figure}[t]
\centering \begin{subfigure}[b]{0.24\textwidth} \centering \includegraphics[width=1\textwidth]{figures/knot_diagrams/3_1}
\caption{$3_{1}$}
\label{fig:diagram_3_1} \end{subfigure} \hfill{}\begin{subfigure}[b]{0.24\textwidth}
\centering \includegraphics[width=1\textwidth]{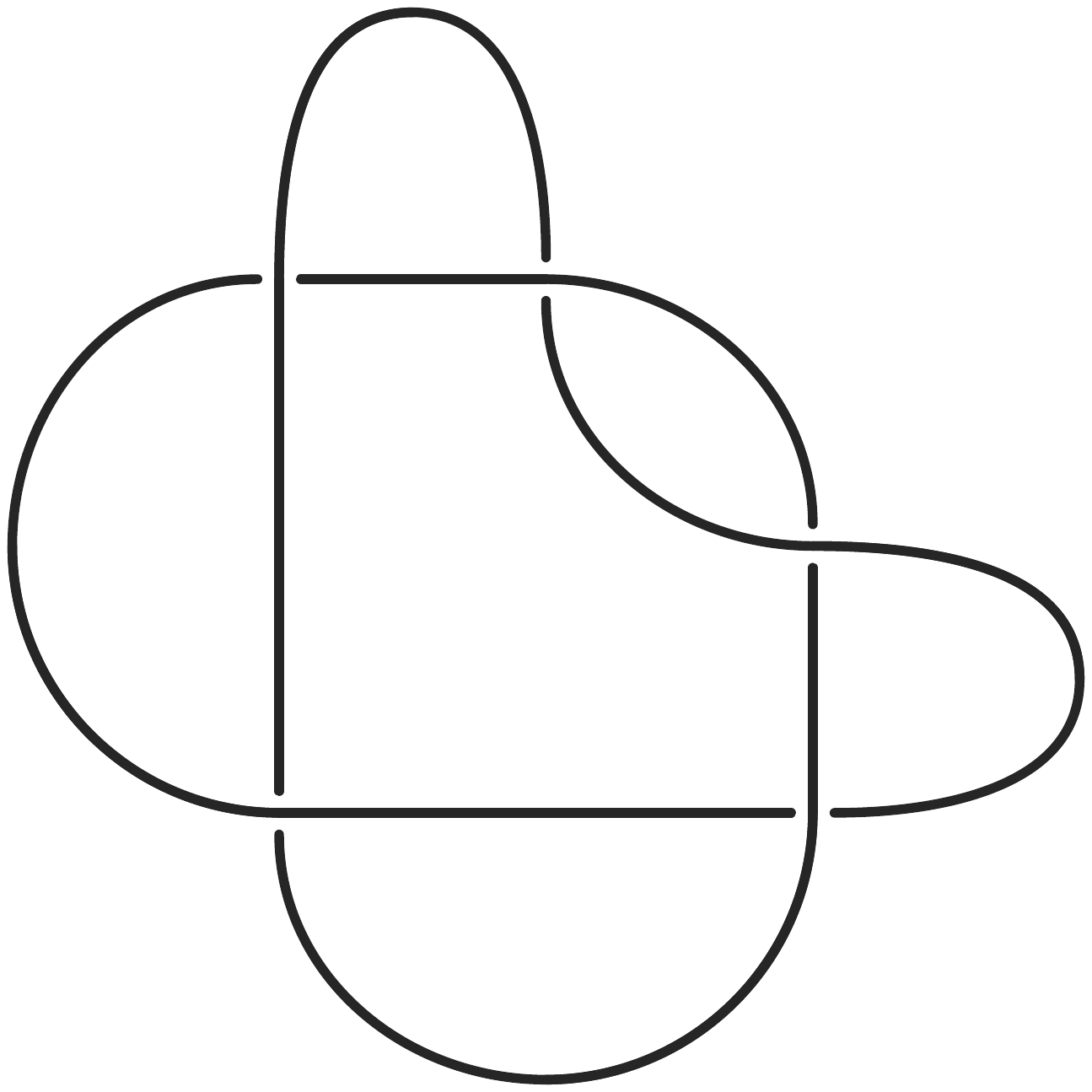}
\caption{$5_{1}$}
\label{fig:diagram_5_1} \end{subfigure} \hfill{}\begin{subfigure}[b]{0.24\textwidth}
\centering \includegraphics[width=1\textwidth]{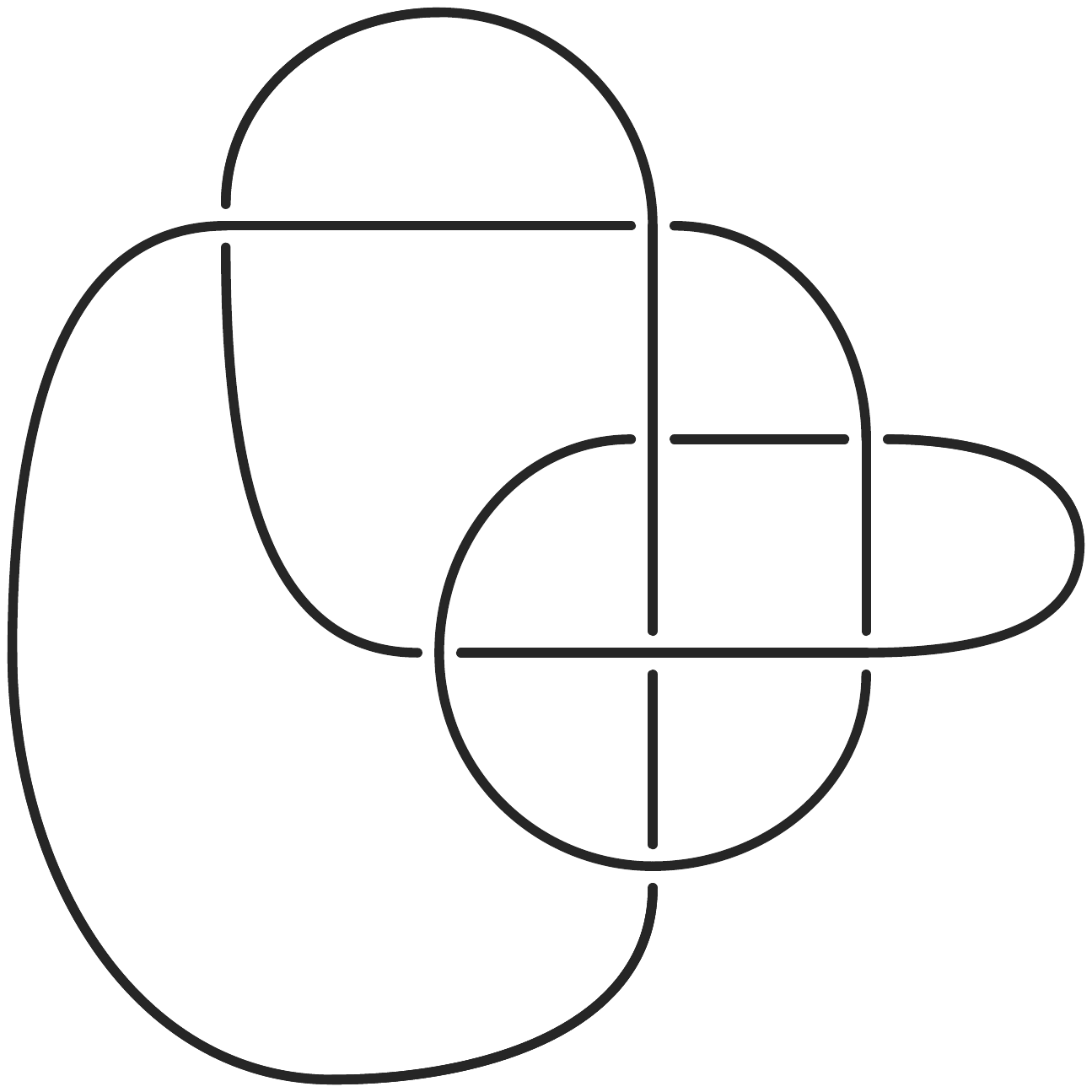}
\caption{$8_{19}$}
\label{fig:diagram_8_19} \end{subfigure} \hfill{}\begin{subfigure}[b]{0.24\textwidth}
\centering \includegraphics[width=1\textwidth]{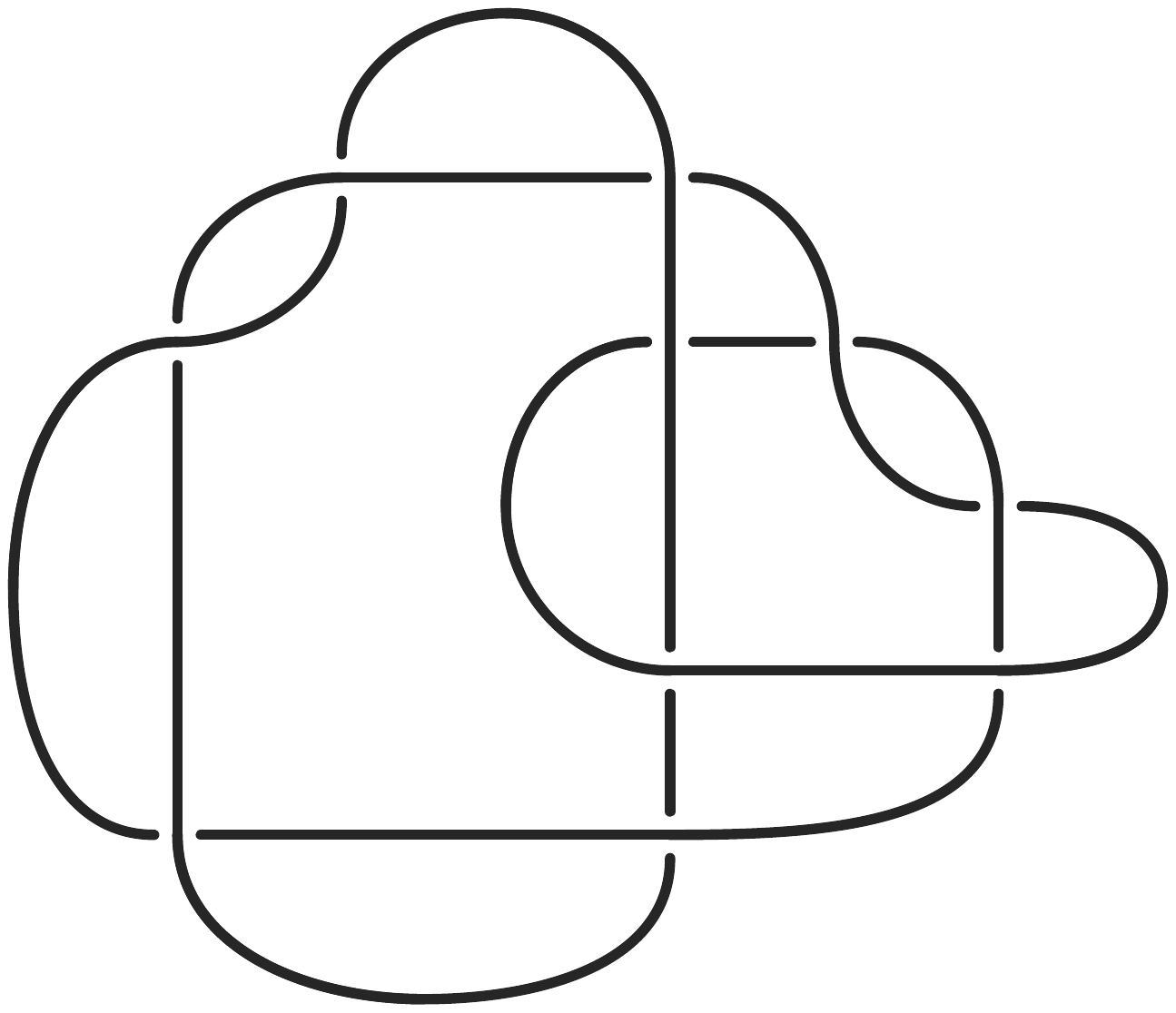}
\caption{$10_{124}$}
\label{fig:diagram_10_124} \end{subfigure}

\begin{subfigure}[b]{0.24\textwidth} \centering \includegraphics[width=1\textwidth]{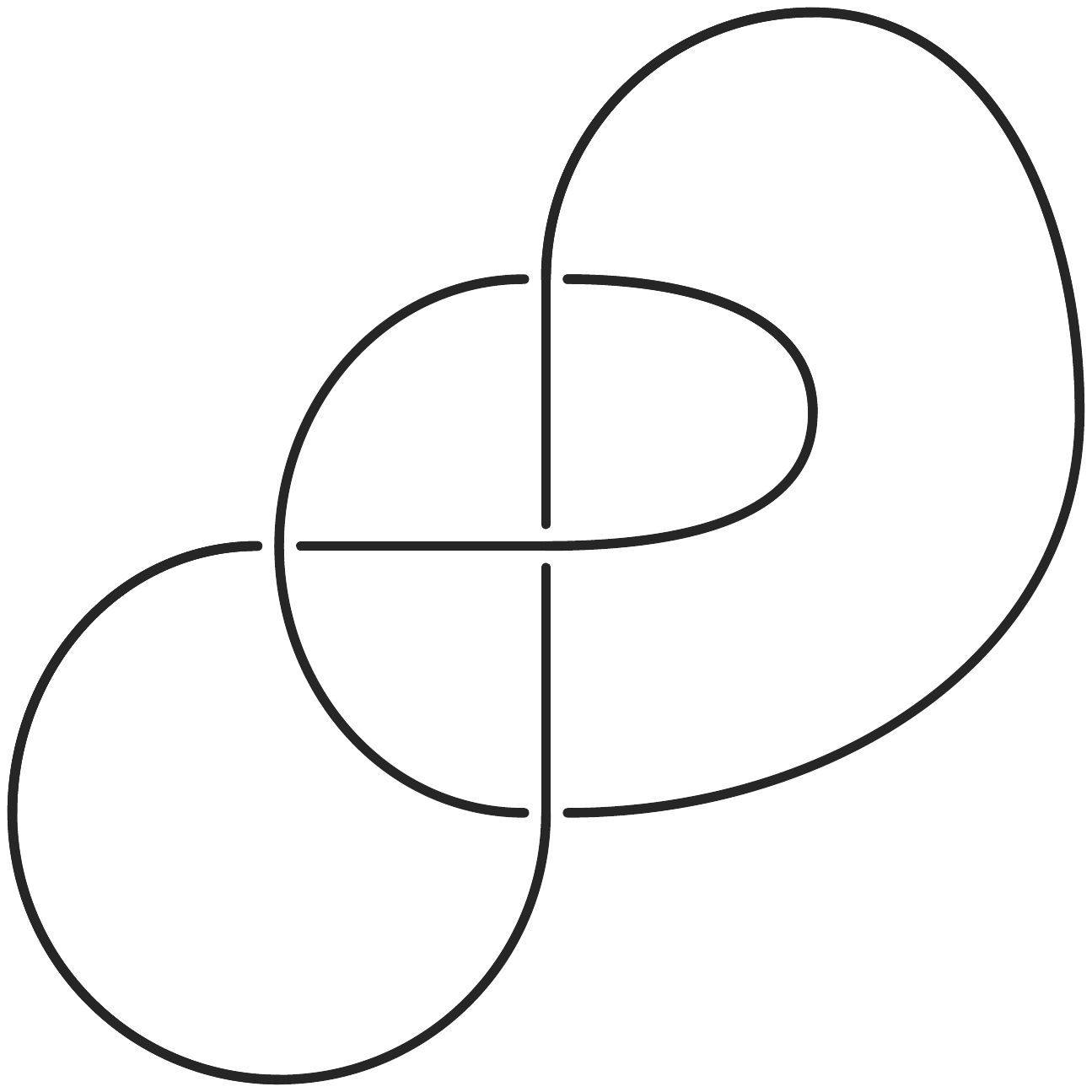}
\caption{$4_{1}$}
\label{fig:diagram_4_1} \end{subfigure} \hfill{}\begin{subfigure}[b]{0.24\textwidth}
\centering \includegraphics[width=1\textwidth]{figures/knot_diagrams/6_1}
\caption{$6_{1}$}
\label{fig:diagram_6_1} \end{subfigure} \hfill{}\begin{subfigure}[b]{0.24\textwidth}
\centering \includegraphics[width=1\textwidth]{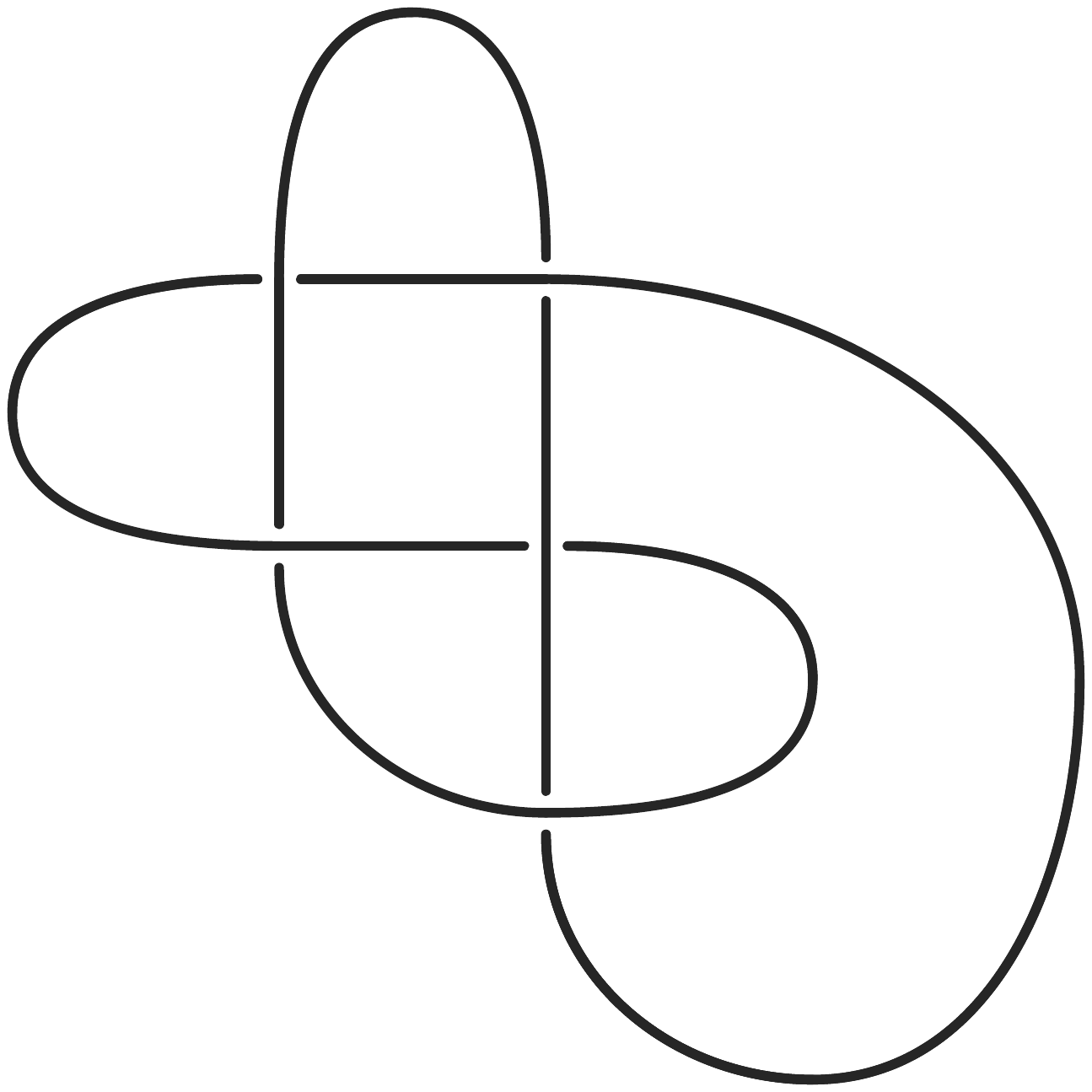}
\caption{$5_{2}$}
\label{fig:diagram_5_2} \end{subfigure} \hfill{}\begin{subfigure}[b]{0.24\textwidth}
\centering \includegraphics[width=1\textwidth]{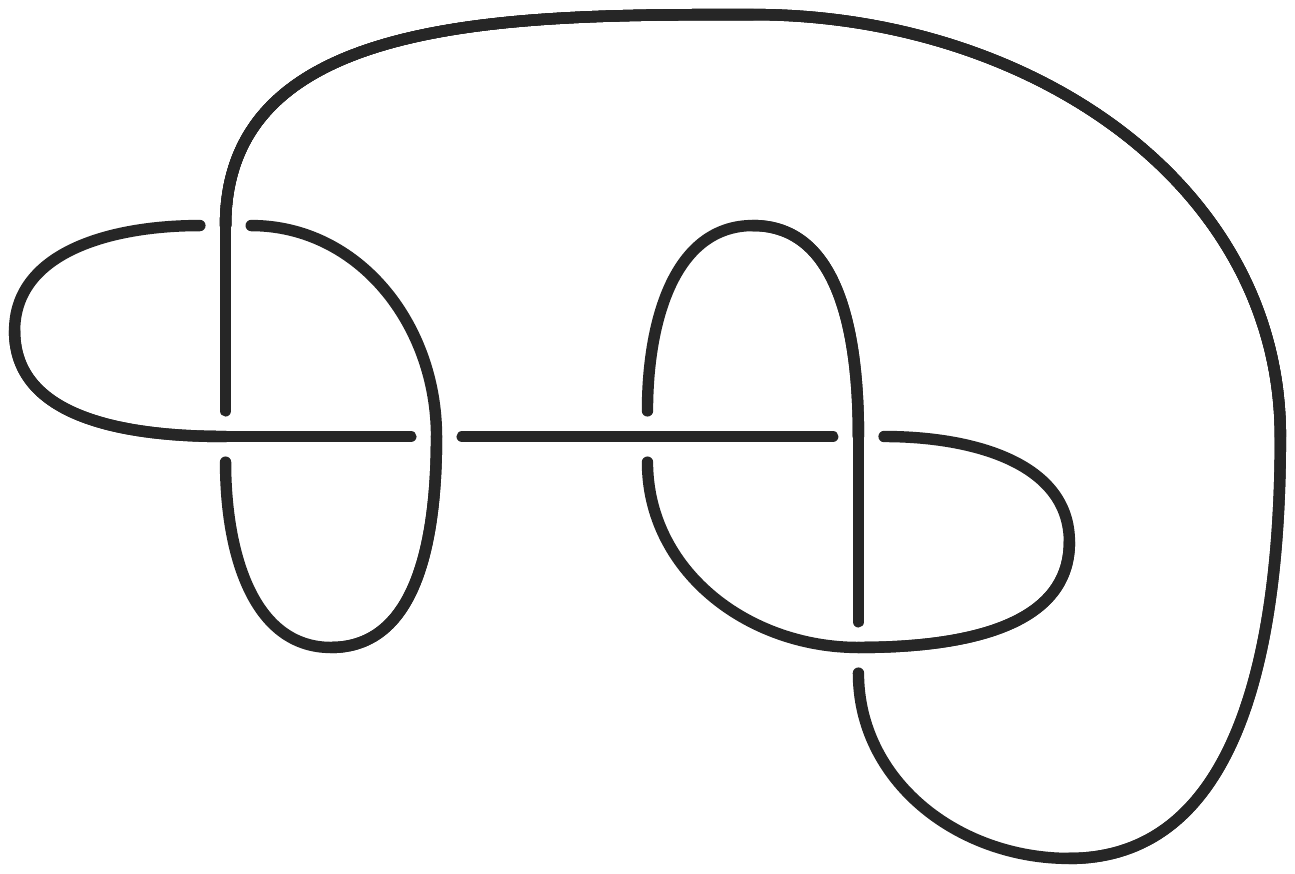}
\caption{$3_{1}\#3_{1}^{*}$}
\label{fig:diagram_square} \end{subfigure} \caption{Knot diagrams of the knots tested in this paper.}
\label{fig:knot_diagrams} 
\end{figure}

In this section, we discuss some of the solutions produced by our
method. The framework is, in principle, able to find p-immersed minimal
discs in $\overline{\HH}^{n+1}$, for any $n\geq2$; for example,
in Figure \ref{fig:minimal_discs_in_H3}, we show various
p-embedded minimal discs in $\overline{\HH}^{3}$ bounded by prescribed simple
closed curves in $S^{2}$, constructed using our framework. However, the main motivation
behind this work is testing Fine's Conjecture (cf. \S\ref{subsec:Fine's-conjecture}),
so all the examples discussed in detail below are p-immersed minimal
discs in $\overline{\HH}^{4}$. The parameters of the trained neural networks for each of these cases, as well as the others reported in Table \ref{tab:summary} but not discussed in detail, are available at the \href{https://github.com/Tancredi-Schettini-Gherardini/deep_plateau.git}{GitHub repository}, together with a notebook which reproduces all the plots below and shows further insights.

Let us discuss in detail how we test Fine's Conjecture. Given a knot $K\subset S^{3}$,
consider its HOMFLY polynomial, written in the form 
\[
P\left(K\right)=\sum_{\begin{smallmatrix}g\in\mathbb{N}\\
d\in\mathbb{Z}
\end{smallmatrix}}c_{g,d}z^{2g}a^{2\left(g+d\right)}.
\]
Fine's Conjecture asserts that, for $K$ generic, $c_{g,d}$ is precisely
the count with sign of minimal p-immersed surfaces in $\overline{\HH}^{4}$
of genus $g$ and self-intersection number $d$, meeting the sphere
at infinity at $K$. Therefore, if $c_{0,d}\not=0$, the conjecture
predicts the existence of minimal p-immersed discs meeting $K$ at
infinity, with self-intersection number $d$. In this section we
describe solutions found by our method, meeting the sphere at infinity
at the following knots: 
\begin{enumerate}
\item unknots; 
\item various torus knots; 
\item the Stevedore, Figure-eight, Three-twist, and square knot. 
\end{enumerate}
The diagrams of these knots are shown in Figure \ref{fig:knot_diagrams}.
For each solution, we compute the self-intersection number, following
the method described in \S\ref{subsec:Double-point-analysis}. The
upshot is the following: \emph{every solution we find is predicted
by Fine's Conjecture}. We remark that the solutions described here
are but a few of the examples we produced; the model exhibits
a remarkable flexibility, provided that the boundary knot can be explicitly
parametrised.

\subsubsection*{Knot deformations}

As discussed in \S\ref{subsec:Fine's-conjecture}, it is important
to allow our boundary knots to be \emph{generic}. On the other hand,
often the available explicit knot parametrisations exhibit non-generic
symmetry. This could be problematic, in that symmetric knots could
lead to non-generic solutions, exhibiting branched points or self-intersections
of higher multiplicity (an example is shown in \S\ref{subsec:Torus-knots}).
We solve this problem as follows. Given a knot parametrisation $\gamma:S^{1}\to\mathbb{R}^{3}$,
we \emph{deform} it by adding a perturbation term $\sigma\delta$,
with $\sigma\in\mathbb{R}^{+}$ a scale parameter and $\delta:S^{1}\to\mathbb{R}^{3}$
a map of the form 
\[
\delta\left(\theta\right)=\sum_{n=1}^{K}A_{n}\cos\left(n\theta\right)+B_{n}\sin\left(n\theta\right).
\]
Here $K\in\mathbb{N}$ is fixed, and the $A_{i},B_{i}\in\mathbb{R}^{3}$
are randomly chosen vectors with entries in $\left[0,1\right]$. For
small enough values of $\sigma$, the map $\gamma+\sigma\delta$ parametrises
a knot isotopic to $\gamma\left(S^{1}\right)$, and breaks the symmetry
of $\gamma$.

\subsubsection*{Evaluation of solutions}

Let us explain how we evaluate the goodness of a solution $u:D^{2}\to\overline{\HH}^{4}$
found by our model. In the training stage (cf. \S\ref{subsec:The-training-regime}),
we use the following loss function: given a random sample $\mathcal{D}=\left\{ p_{i}\right\} _{i=1}^{N}$
of $N$ points drawn from the interior of the unit disc, the loss
is 
\[
\mathcal{L}\left(u;\mathcal{D}\right)=\frac{1}{N}\sum_{i=1}^{N}\left|\tau\left(u\right)\right|^{2}\left(p_{i}\right),
\]
where $\tau\left(u\right)$ is the tension field of $u$ and $\left|\tau\left(u\right)\right|^{2}$
is its squared pointwise norm with respect to the hyperbolic metric.
At the evaluation stage, we perform a \emph{Monte Carlo simulation}
of the distribution of $\mathcal{D}\mapsto\mathcal{L}\left(u;\mathcal{D}\right)$:
we draw $S$ independent samples $\mathcal{D}_{i}$ of size $N$,
we compute $\mathcal{L}\left(u;\mathcal{D}_{i}\right)$, and we record
the average and standard deviation of these losses. This step is important
in order to make sure that the model is not \emph{overfitting}, that
is, it is not just learning to minimise $\left|\tau\left(u\right)\right|^{2}$
over the specific points used at the training stage.

As a benchmark, we use the metrics obtained by training a solution
$u_{\mathrm{unknot}}$ whose boundary value is a deformed unknot in $\mathbb{R}^{3}$:
specifically, we use a boundary value $\gamma_{\mathrm{unknot}}+\sigma\delta$,
where $\gamma_{\mathrm{unknot}}:S^{1}\to\mathbb{R}^{3}$ is the parametrisation
$\theta\mapsto\left(\cos\theta,\sin\theta,0\right)$ and $\sigma\delta$
is a small perturbation as described above. The rationale behind this
choice is that $\gamma_{\mathrm{unknot}}$ is the boundary value of
the exact solution 
\[
u_{\HH^{2}}\left(\theta,r\right)=\left(\frac{1-r^{2}}{1+r^{2}},\frac{2r\cos\theta}{1+r^{2}},\frac{2r\sin\theta}{1+r^{2}},0\right),
\]
parametrising a totally geodesic $\HH^{2}$ in $\HH^{4}$. By a simple
application of the implicit function theorem, we know \emph{a priori}
that for $\sigma\delta$ small enough, the parametrised unknot $\gamma_{\mathrm{unknot}}+\sigma\delta$
is the boundary value of a true solution close to $u_{\HH^{2}}\left(\theta,r\right)$;
moreover, we expect our model to be capable of approximating this
solution quite well, since the solution $u_{\HH^{2}}\left(\theta,r\right)$
belongs to the class of maps expressible exactly by our model (cf.
\S\ref{subsec:The-hyperparameter-choice}). Thus, in some sense,
the expected losses of these `deformed minimal $\HH^{2}$s' should be
considered as loose lower bounds for the expected loss of a solution
with more complicated boundary values.

At the end of the section, Table \ref{tab:summary} collects a summary of various
metrics for each of the examples we found.

\subsection{Unknot}

\setcounter{totalnumber}{2} 

\begin{figure}[t]
\centering \begin{subfigure}[b]{0.48\textwidth} \centering \includegraphics[width=1\textwidth]{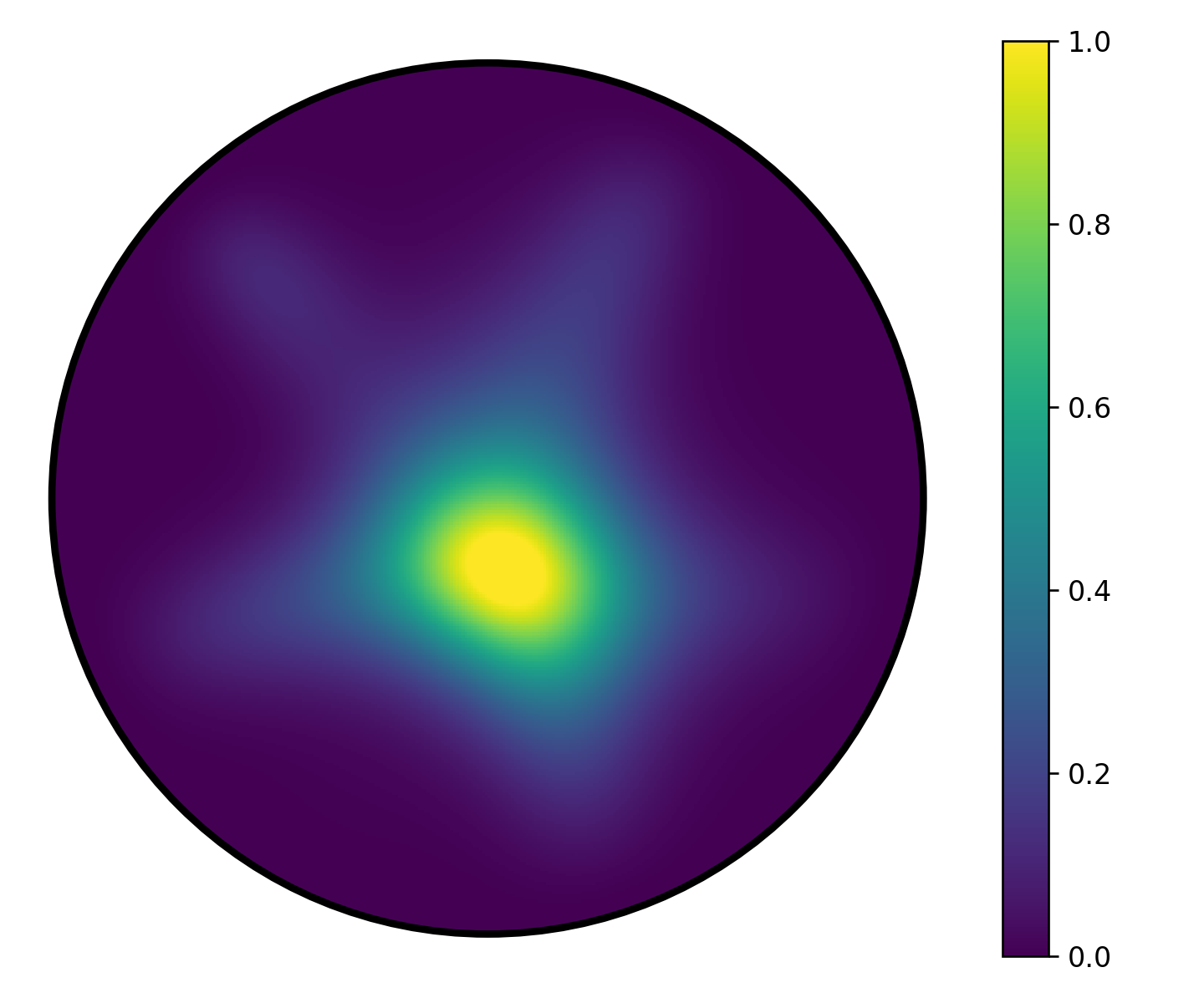}
\end{subfigure} \hfill{}\begin{subfigure}[b]{0.48\textwidth}
\centering \includegraphics[width=1\textwidth]{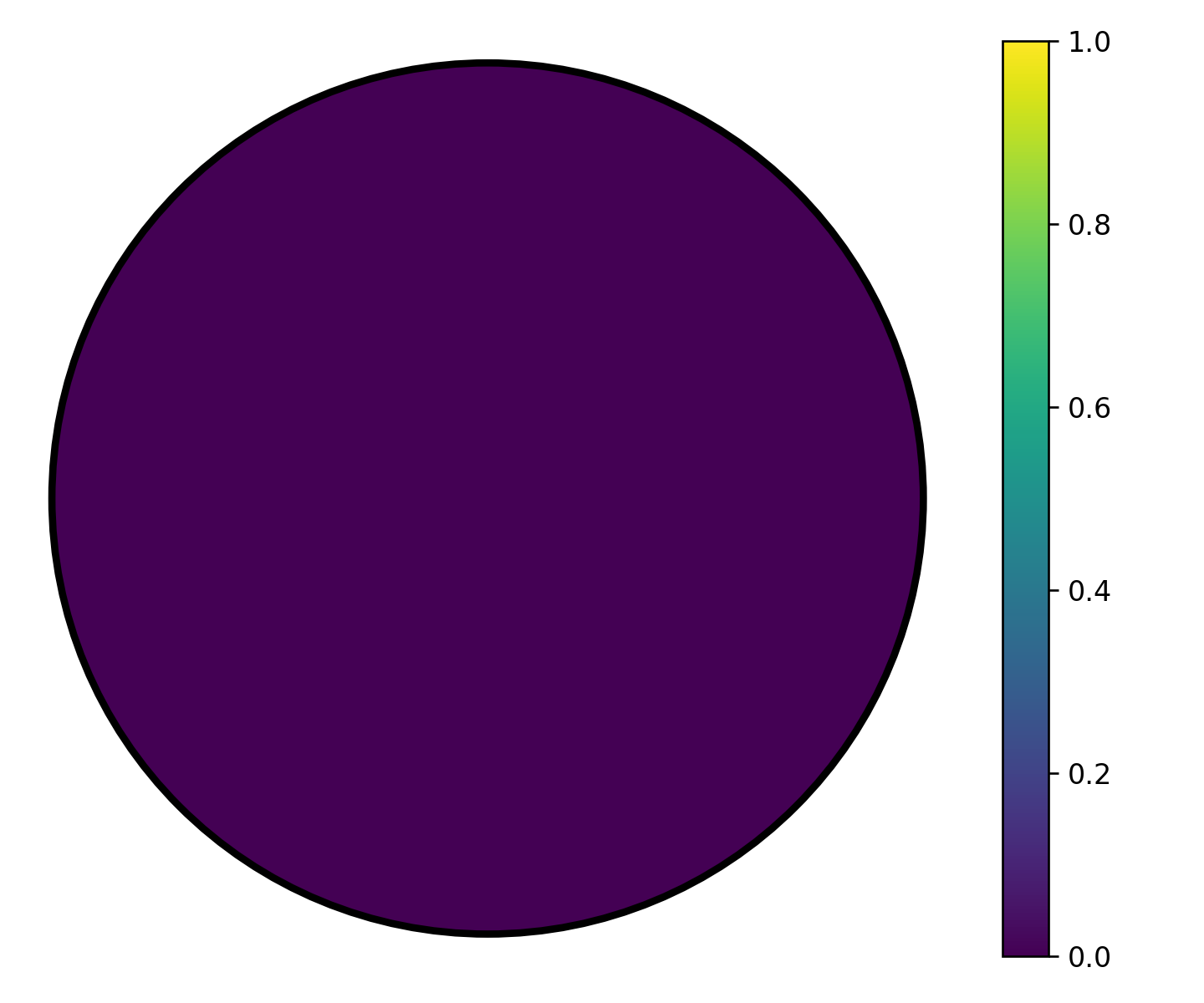}
\end{subfigure} \caption{$\left|\tau\left(u_{\mathrm{unknot}}\right)\right|^{2}$ before (left)
and after training (right). Note how the surface is minimal near the
boundary even before training, by construction, as described in \S
\ref{subsec:model_description}. A more quantitative assessment of
the error after training is provided in Figure \ref{fig:unknot_montecarlo}.}
\label{fig:unknot_errors} 
\end{figure}

\begin{figure}[t]
\centering \includegraphics[width=0.7\textwidth]{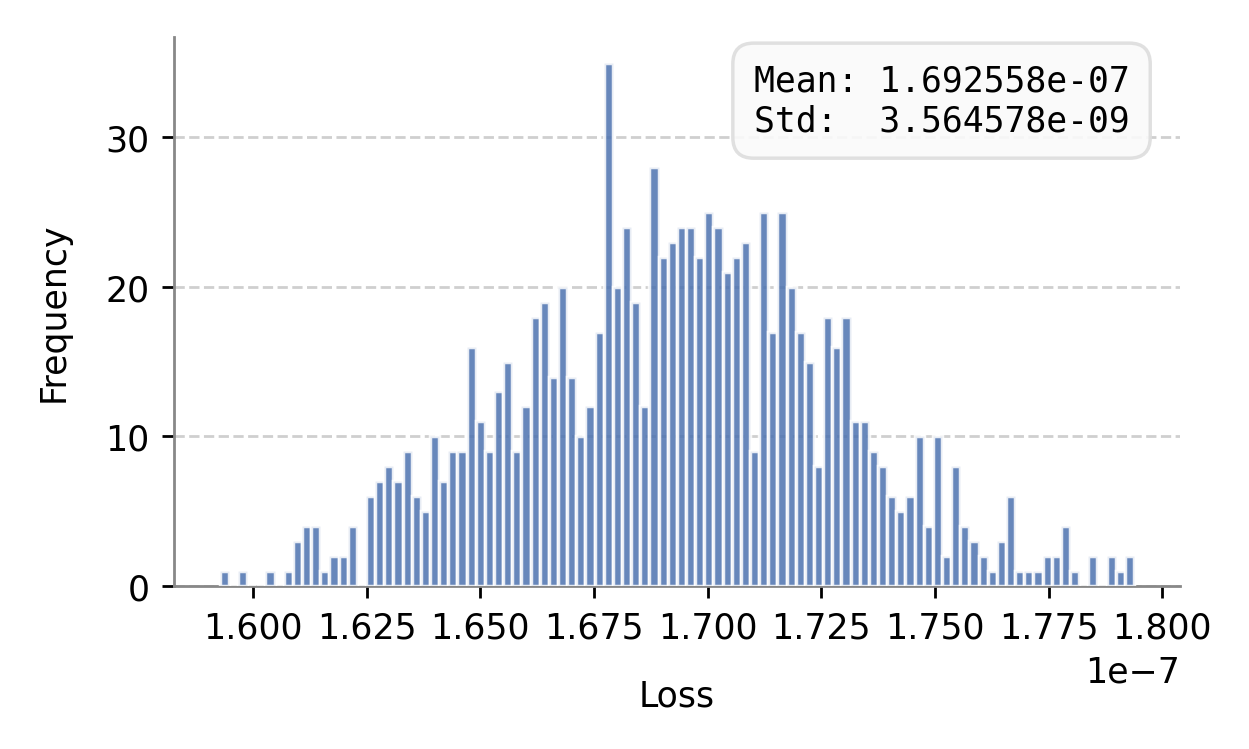}
\caption{Monte Carlo simulation of $\mathcal{L}\left(u_{\mathrm{unknot}};\cdot\right)$
with $1000$ samples of size $2^{14}$.}
\label{fig:unknot_montecarlo} 
\end{figure}

\setcounter{totalnumber}{1} 

As explained above, we start by discussing a solution $u_{\mathrm{unknot}}$
meeting the sphere at infinity at an \emph{unknot}. We use a parametrisation
of the form $\gamma_{\mathrm{unknot}}+\sigma\delta$, with 
\[
\gamma_{\mathrm{unknot}}\left(\theta\right)=\left(\cos\theta,\sin\theta,0\right)
\]
and with $\sigma\delta$ a small perturbation as described above.
In Figure \ref{fig:unknot_montecarlo}, we can see a plot of the Monte
Carlo simulation of the map $\mathcal{D}\mapsto\mathcal{L}\left(u_{\mathrm{unknot}};\mathcal{D}\right)$,
with $1000$ samples of size $2^{14}$ (the same size used in the
training process). In Figure \ref{fig:unknot_errors}, we can see
heat-maps of the function $\left|\tau\left(u_{\mathrm{unknot}}\right)\right|^{2}$
on $D^{2}$, before and after training. Note that, as expected, $\left|\tau\left(u_{\mathrm{unknot}}\right)\right|^{2}$
is exactly zero along the boundary even before training; this confirms empirically
that our framework indeed produces asymptotically minimal discs.
Our algorithm for the search of double points does not find any solution,
confirming that the solution is indeed embedded.

\subsection{\label{subsec:Torus-knots}Torus knots}

Here we discuss solutions meeting the sphere at infinity along various
\emph{torus knots}. 
\begin{defn}
A \emph{torus knot} is a knot that lies entirely on the surface of
a torus in $\mathbb{R}^{3}$. A $\left(p,q\right)$ \emph{torus knot}
is a torus knot that winds counterclockwise exactly $p$ times around
the rotation axis of the torus, and exactly $q$ times around the
meridian of the torus. 
\end{defn}

A parametrisation of the $\left(p,q\right)$ torus knot is given by
\[
\gamma_{p,q}\left(\theta\right)=\left(\begin{matrix}\left(R+r\cos\left(q\theta\right)\right)\cos\left(p\theta\right)\\
\left(R+r\cos\left(q\theta\right)\right)\sin\left(p\theta\right)\\
r\sin\left(q\theta\right)
\end{matrix}\right),
\]
where $R,r$ are the radii of the torus.

\subsubsection*{The $3_{1}$ knot}

\begin{figure}[t]
\centering \includegraphics[width=0.8\textwidth]{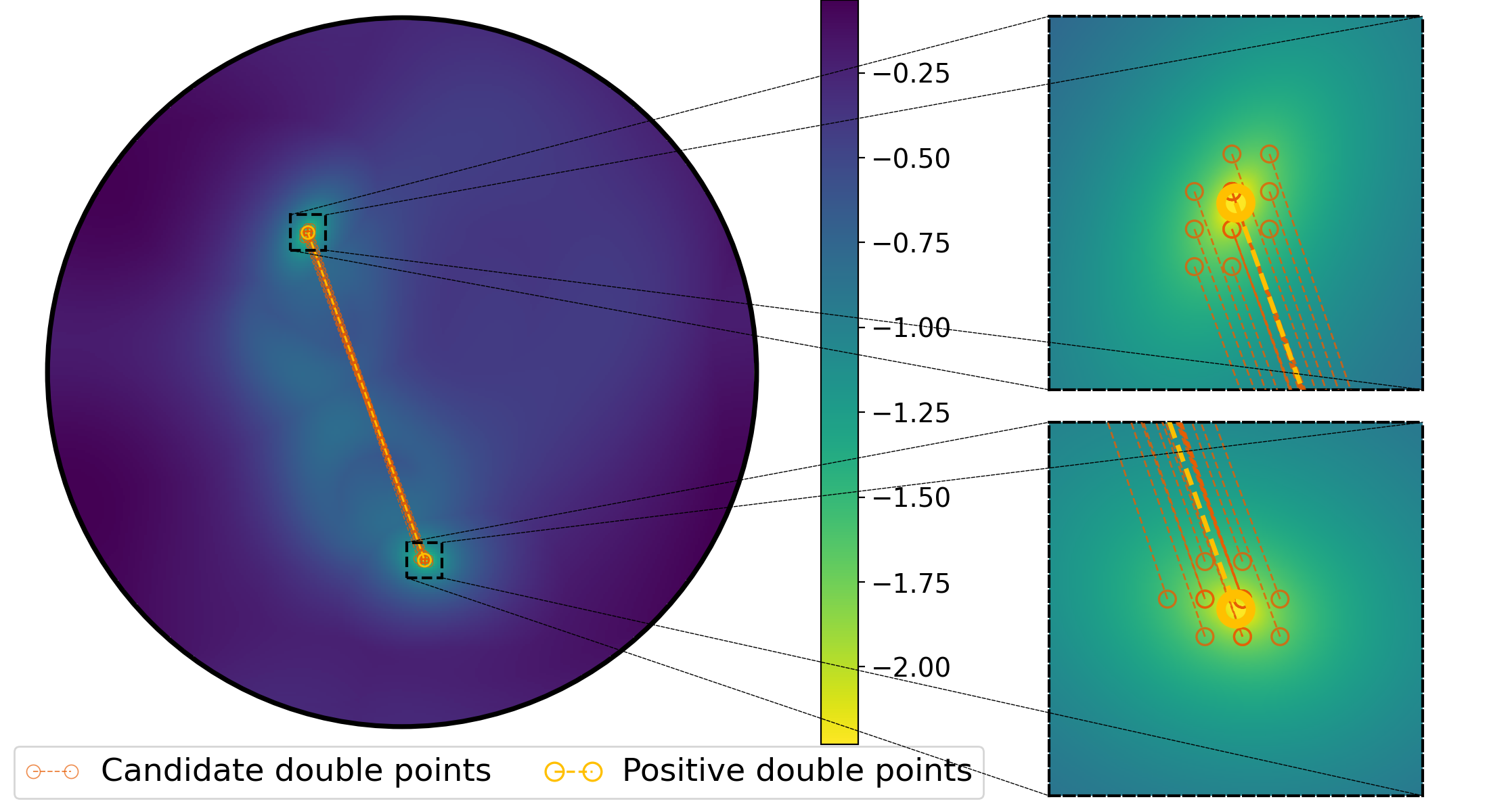}
\caption{Log self-proximity map of $u_{3,2}$ and its double point.}
\label{fig:3_1} 
\end{figure}

We start by discussing a solution $u_{3,2}$ meeting the boundary
at infinity at a $\left(3,2\right)$ torus knot, also known as the
\emph{right-handed} \emph{trefoil} and denoted by $3_{1}$ in A--B
notation. We parametrise $3_{1}$ as $\gamma_{3,2}+\sigma\delta$
for a small perturbation $\sigma\delta$. Figure \ref{fig:diagram_3_1}
shows a diagram of this knot. In order to test Fine's Conjecture,
we compute the double points of $u_{3,2}$ using the technique explained
in \S\ref{subsec:Double-point-analysis}. In Figure \ref{fig:3_1},
we see a plot of the self-proximity map of $u_{3,2}$ (cf. Definition
\ref{def:self-proximity-map}). We can clearly see the presence of
two isolated and concentrated regions where $\mu_{\varepsilon}\left(u_{3,2}\right)$
is very small; these points represent the approximate positions of
the pre-images of a double point of $u_{3,2}$. Indeed, running our
double-point-finder algorithm, we find various candidate double point pre-images
(visible in orange in Figure \ref{fig:3_1}), all clustered together. These pairs of points all converge, via Newton's Method, to a unique pair representing a double point (shown in bright yellow
in Figure \ref{fig:3_1}). The determinant of the Jacobian of the
map $D^{2}\times D^{2}\to\mathbb{R}^{4}$ given by $\left(x,x'\right)\mapsto u_{3,2}\left(x\right)-u_{3,2}\left(x'\right)$, evaluated at this pair, is positive; moreover, the Euclidean distance $\left|\left|u_{\theta}\left(x_{1}\right)-u_{\theta}\left(x_{2}\right)\right|\right|$
is $\approx10^{-16}$, that is $0$ up to machine precision. We can
then conclude that $u_{3,2}$ is a near-minimal p-immersion $D^{2}\to\overline{\HH}^{4}$
with self-intersection number $1$.

The HOMFLY polynomial of $3_{1}$ is 
\[
P\left(3_{1}\right)=2a^{2}-a^{4}+a^{2}z^{2},
\]
and therefore the presence of the term $2a^{2}$ predicts the existence
of a minimal disc with self-intersection number $1$; our near-solution
is consistent with this prediction.

We can repeat the same exact analysis on a near-solution meeting the
sphere at infinity at $3_{1}^{*}$, the \emph{mirror image} of the
trefoil knot. The result is very similar to the one above; the only
difference is that the double point has a \emph{negative} sign. This
is in perfect accordance with Fine's Conjecture: indeed, 
\[
P\left(3_{1}^{*}\right)=2a^{-2}-a^{-4}+a^{-2}z^{2},
\]
so the solution we found is predicted by the presence of the term
$2a^{-2}$.

\subsubsection*{The $8_{19}$ knot}

\begin{figure}[t]
\centering \begin{subfigure}[b]{0.49\textwidth} \centering \includegraphics[width=1\textwidth]{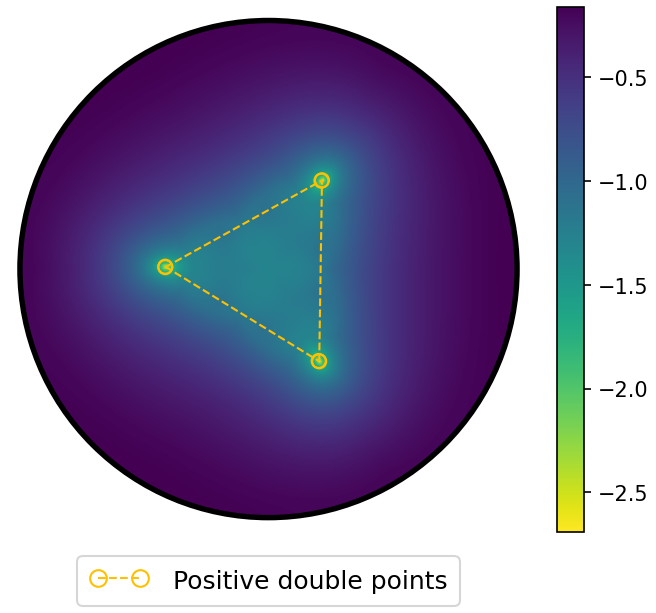}
\caption{Unperturbed $8_{19}$.}
\label{fig:8_19_unperturbed} \end{subfigure} \begin{subfigure}[b]{0.49\textwidth}
\centering \includegraphics[width=1\textwidth]{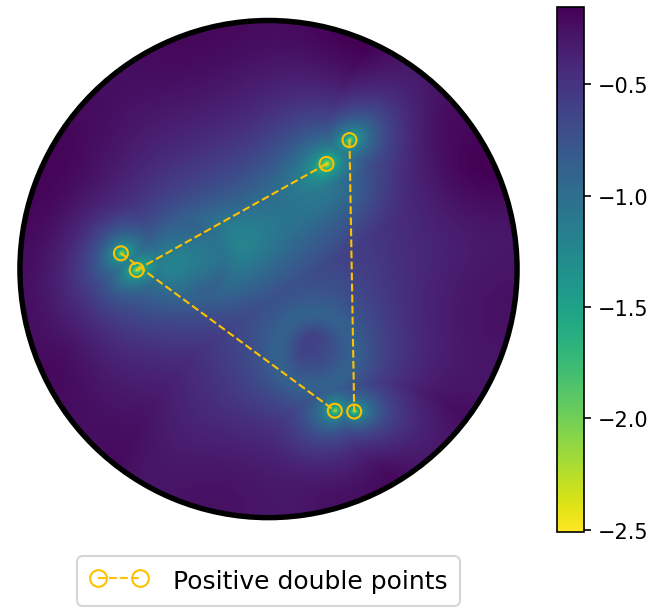}
\caption{Perturbed $8_{19}$.}
\label{fig:8_19} \end{subfigure} \caption{Log self-proximity maps of $u_{4,3},\tilde{u}_{4,3}$ and their double
points. It should compared with Figure \ref{fig:triple_point_desingularized},
as discussed in \S \ref{subsec:Minimal-p-submanifolds-of-Hn}.}
\label{fig:self_proximity_8_19} 
\end{figure}

We discuss another solution, which exemplifies the importance of deforming
the boundary knot. The $\left(4,3\right)$ torus knot is indicated,
in A--B notation, as $8_{19}$. We trained a solution $u_{4,3}$,
with boundary value given by the \emph{unperturbed} parametrisation
$\gamma_{4,3}$. Figure \ref{fig:diagram_8_19} shows a diagram of
this knot, while Figure \ref{fig:8_19_unperturbed} shows the heat-map
of the self-proximity map $\mu_{\varepsilon}\left(u_{4,3}\right)$.
As we can see, $u_{4,3}$ exhibits a \emph{triple point}, that is, a self-intersection
with \emph{three} pre-images. This behaviour is non-generic, and it
is due to the fact that the parametrisation $\gamma_{4,3}$ above
is non-generically symmetric. On the other hand, if we use as boundary
value a \emph{deformed} parametrisation $\gamma_{4,3}+\sigma\delta$,
we obtain a solution $\tilde{u}_{4,3}$ with \emph{three double points}:
Figure \ref{fig:8_19} shows the self-proximity map $\mu_{\varepsilon}\left(\tilde{u}_{4,3}\right)$.
We can clearly see that the triple point of $u_{4,3}$ is de-singularised
into the three double points of $\tilde{u}_{4,3}$; this is exactly
the expected behaviour, analogous to that observed in the one-dimensional
example discussed after Definition \ref{def:double-point}. These
double points are all positive, so $\tilde{u}_{4,3}$ has self-intersection
number $3$. The $8_{19}$ knot has HOMFLY polynomial 
\begin{align*}
P\left(8_{19}\right) & =\left(5a^{6}-5a^{8}+a^{10}\right)+\left(10a^{6}-5a^{8}\right)z^{2}+\left(6a^{6}-a^{8}\right)z^{4}+a^{6}z^{6};
\end{align*}
the term $5a^{6}$ predicts the existence of a minimal disc with self-intersection
number $3$, and our solution is consistent with this prediction.

Dually, the HOMFLY polynomial of the mirror image $8_{19}^{*}$ contains
the term $5a^{-6}$, predicting the existence of a minimal disc with
self-intersection number $-3$; we trained a near-solution with boundary
value a parametrisation of $8_{19}^{*}$; as expected, this solution
exhibits three \emph{negative} double points.

\subsubsection*{Other torus knots}

\begin{figure}[t]
\centering \begin{subfigure}[b]{0.49\textwidth} \centering \includegraphics[width=1\textwidth]{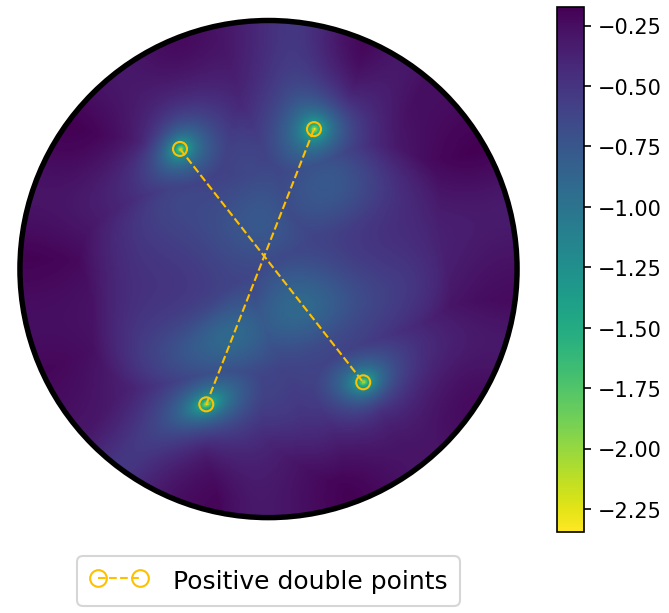}
\caption{$5_{1}$}
\label{fig:5_1} \end{subfigure} \begin{subfigure}[b]{0.49\textwidth}
\centering \includegraphics[width=1\textwidth]{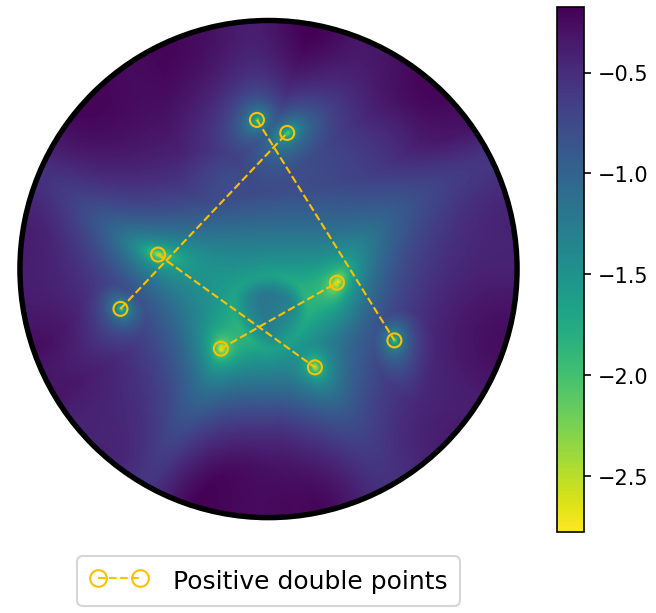}
\caption{$10_{124}$}
\label{fig:10_124} \end{subfigure} \label{fig:self_proximity_torus}
\caption{Log self-proximity maps of $u_{5,2},u_{5,3}$ and their double points,
matching the prediction from Fine's Conjecture.}
\end{figure}

We briefly report two more solutions, namely a solution $u_{5,2}$
bounding a slightly deformed $\left(5,2\right)$ torus knot ($5_{1}$
in A--B notation) and a solution $u_{5,3}$ bounding a deformed $\left(5,3\right)$
torus knot ($10_{124}$ in A--B notation). Knot diagrams of these
knots are shown in Figures \ref{fig:diagram_5_1} and \ref{fig:diagram_10_124}.
The second example shows that our model is capable of finding solutions
bounding quite complex knots, as $10_{124}$ has crossing number $10$.

Again, the solutions we found are predicted by Fine's Conjecture. The
HOMFLY polynomials are 
\begin{align*}
P\left(5_{1}\right) & =3a^{4}-2a^{6}+O\left(z^{2}\right)\\
P\left(10_{124}\right) & =7a^{8}-8a^{10}+2a^{12}+O\left(z^{2}\right);
\end{align*}
the solution $u_{5,2}$ has $2$ positive double points, and is predicted
by the term $3a^{4}$ in $P\left(5_{1}\right)$, while the solution
$u_{5,3}$ has $4$ positive double points and is predicted by the
term $7a^{8}$. We also trained solutions bounding parametrisations
of the mirror images $5_{1}^{*}$ and $10_{124}^{*}$. Again, since
$P\left(K^{*}\right)\left(a,z\right)=P\left(K\right)\left(a^{-1},z\right)$,
the polynomial $P\left(5_{1}^{*}\right)$ contains the term $3a^{-4}$
while $P\left(10_{124}^{*}\right)$ contains the term $7a^{-8}$.
Indeed, our solutions have $2$ and $4$ negative double points, exactly
as predicted (cf. Figures \ref{fig:5_1} and \ref{fig:10_124}).

\subsection{Other knots}

\begin{figure}[t]
\centering \begin{subfigure}[b]{0.49\textwidth} \centering \includegraphics[width=1\textwidth]{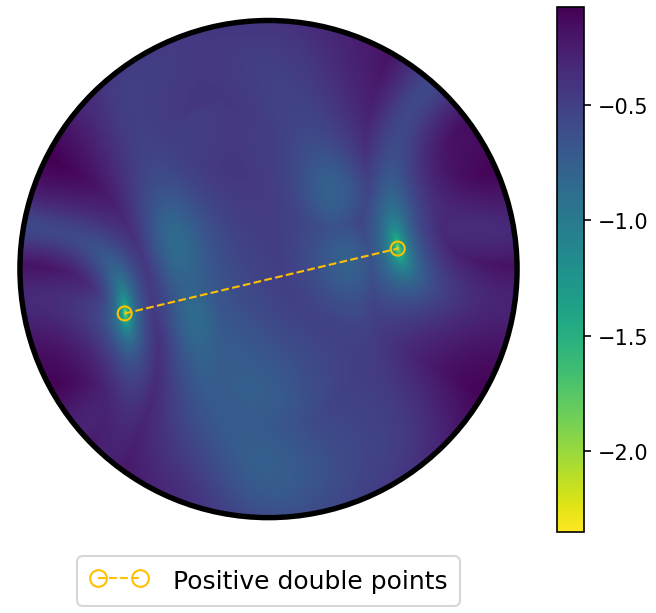}
\caption{$4_{1}$}
\label{fig:4_1} \end{subfigure} \begin{subfigure}[b]{0.49\textwidth}
\centering \includegraphics[width=1\textwidth]{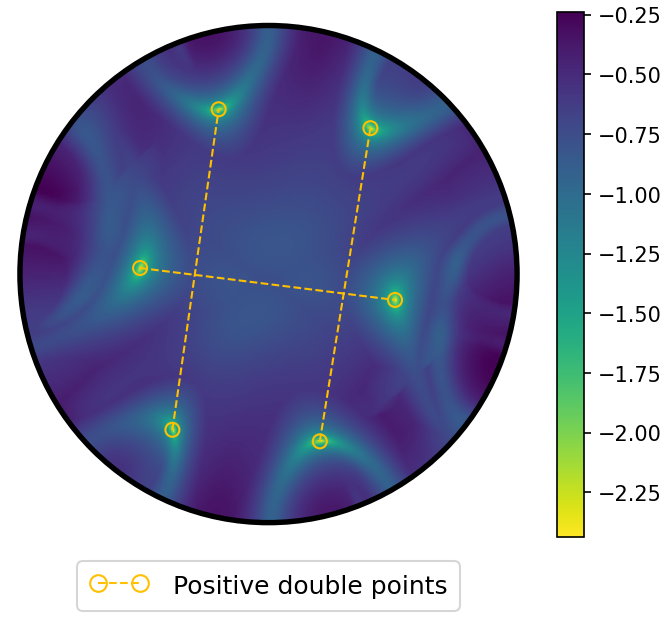}
\caption{$5_{2}$}
\label{fig:5_2} \end{subfigure}

\begin{subfigure}[b]{0.49\textwidth} \centering \includegraphics[width=1\textwidth]{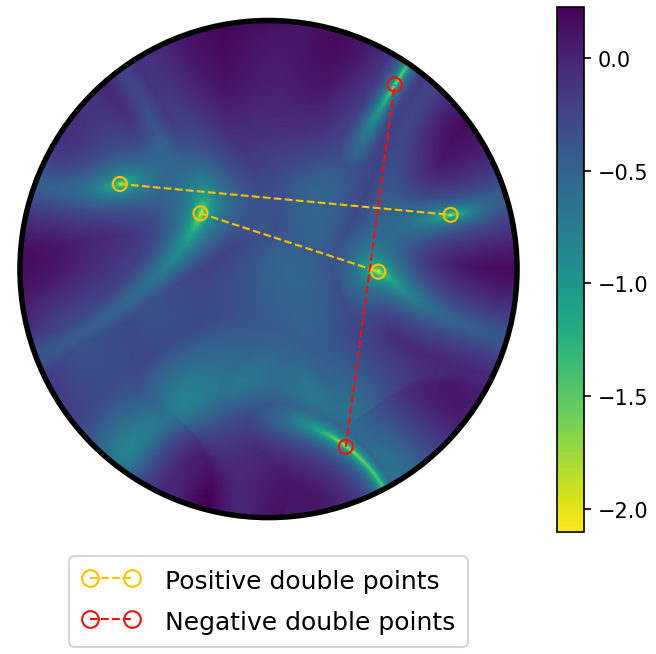}
\caption{$6_{1}$}
\label{fig:6_1} \end{subfigure} \begin{subfigure}[b]{0.49\textwidth}
\centering \includegraphics[width=1\textwidth]{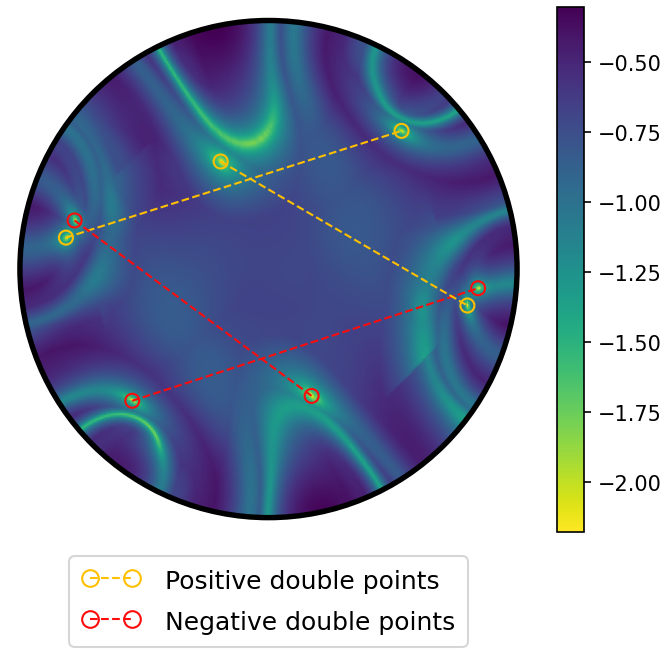}
\caption{$3_{1}\#3_{1}^{*}$}
\label{fig:square} \end{subfigure} \caption{Log self-proximity maps of $u_{\mathrm{figure8}}$ ($4_{1}$), $u_{\mathrm{3twist}}$
($5_{2}$), $u_{\mathrm{Stevedore}}$ ($6_{1}$), $u_{\mathrm{square}}$
($3_{1}\#3_{1}^{*}$) and their double points. Each result is predicted
by Fine's Conjecture.}
\end{figure}

We discuss four more examples, namely the \emph{Figure-eight} knot,
the \emph{Three-twist} knot, the \emph{Stevedore} knot and the \emph{square} knot, a non-prime knot with
crossing number $6$. None of these examples is a torus knot, and
each of them explores different aspects of Fine's Conjecture.

\subsubsection*{The $4_{1}$ knot}

The \emph{Figure-eight} knot, known as $4_{1}$ in A--B notation,
is the simplest \emph{achiral} knot: it is isotopic to its mirror
image. A diagram of this knot is shown in Figure \ref{fig:diagram_4_1}.
The HOMFLY polynomial is 
\[
P\left(4_{1}\right)=a^{-2}-1+a^{2}-z^{2}.
\]
According to Fine's Conjecture, the coefficient $-1$ predicts the
existence of a minimal disc with self-intersection number $0$. However,
we know that $4_{1}$ cannot bound an \emph{embedded} (not necessarily
minimal) disc. The smallest genus of an embedded surface bounding
a knot is known as the \emph{smooth slice index} of the knot, and
it is a knot invariant; the smooth slice index of $4_{1}$ is known
to be $1$. Therefore, Fine's Conjecture predicts the existence of
a minimal disc with an \emph{even} number of self-intersections, whose
signs cancel out. We are not able to provide evidence for this prediction;
rather, we find a solution $u_{\mathrm{figure8}}$ bounding a deformation
of the embedding 
\begin{align*}
\gamma_{\mathrm{figure8}}\left(\theta\right) & =\left(\begin{matrix}\left(1+\frac{1}{2}\cos\left(2\theta\right)\right)\cos\left(3\theta\right)\\
\left(1+\frac{1}{2}\cos\left(2\theta\right)\right)\sin\left(3\theta\right)\\
\frac{1}{2}\sin\left(4\theta\right)
\end{matrix}\right),
\end{align*}
with one positive double point (cf. Figure \ref{fig:4_1}). This solution
is predicted by the presence of the term $a^{2}$ in the HOMFLY polynomial.
Similarly, if we change the sign of the boundary parametrisation $\gamma_{\text{figure8}}$
(obtaining another valid parametrisation of $4_{1}$ by achirality),
we find a solution with a negative double point. This solution is
predicted by the presence of the term $a^{-2}$.

\subsubsection*{The $5_{2}$ knot}

The \emph{Three-twist} knot is known as $5_{2}$ in A--B notation.
A diagram of this knot is shown in Figure \ref{fig:diagram_5_2}.
The HOMFLY polynomial is 
\[
P\left(5_{2}\right)=\left(a^{2}+a^{4}-a^{6}\right)+\left(a^{2}+a^{4}\right)z^{2}.
\]
Note that there are only positive powers of $a$; the same chiral
asymmetry holds for $\left(p,q\right)$ torus knots with $p,q>0$.
The smallest degree monomial in $a$ in the HOMFLY polynomial of the
$\left(p,q\right)$ torus knot with $p,q>0$ is 
\[
\frac{1}{p+q}\binom{p+q}{p}a^{\left(p-1\right)\left(q-1\right)}.
\]
In all the examples we considered, we were able to find a solution
with $\frac{\left(p-1\right)\left(q-1\right)}{2}$ positive double
points, predicted by the term above. The number $\frac{\left(p-1\right)\left(q-1\right)}{2}$
is also the smooth slice genus of this knot. Since every double point
can be de-singularised by replacing it with a $1$-handle, the number
$\frac{\left(p-1\right)\left(q-1\right)}{2}$ is the \emph{smallest
possible number of double points} that an immersed disc bounding the
$\left(p,q\right)$ torus knot can have. Thus, the solutions we find
are in some sense `as simple as possible'.

In the case of the $5_{2}$ knot, while the slice genus is $1$, we
do find a solution $u_{\text{3twist}}$ with \emph{three} positive
self-intersections, predicted by the term $-a^{6}$ in the HOMFLY
polynomial (cf. Figure \ref{fig:5_2}). The boundary value of our
solution is a deformation of the Lissajous parametrisation 
\[
\gamma_{\mathrm{3twist}}\left(\theta\right)=\left(\begin{matrix}-\cos\left(3\theta+0.7\right)\\
-\cos\left(2\theta+0.2\right)\\
-\cos\left(7\theta\right)
\end{matrix}\right).
\]
Similarly, training the model with boundary value a similar parametrisation
of $5_{2}^{*}$, we find a solution with three \emph{negative} double
points, in accordance with the presence of the term $-a^{-6}$ in
$P\left(5_{2}^{*}\right)$.

\subsubsection*{The $6_{1}$ knot}

The \emph{Stevedore knot} is known as $6_{1}$ in A--B notation.
A diagram of this knot is shown in Figure \ref{fig:diagram_6_1}.
The HOMFLY polynomial of $6_{1}$ is 
\[
P\left(6_{1}\right)=\left(a^{-2}-a^{2}+a^{4}\right)+\left(-1-a^{2}\right)z^{2}.
\]
This example is in some sense the opposite of the one concerning the
Figure-eight. While $P\left(6_{1}\right)$ does not have a constant
term, $6_{1}$ is known to be \emph{smoothly slice}, that is, it is
known to be the boundary of an \emph{embedded} (not necessarily minimal)
disc. We are able to find a minimal disc bounding a deformation of
the following Lissajous parametrisation of $6_{1}$: 
\[
\gamma_{\mathrm{Stevedore}}\left(\theta\right)=\left(\begin{matrix}-\cos\left(3\theta+1.5\right)\\
-\cos\left(2\theta+0.2\right)\\
-\cos\left(5\theta\right)
\end{matrix}\right).
\]
Our solution has two positive double points, and one negative double
point (cf. Figure \ref{fig:6_1}); therefore, its self-intersection
number is $1$, and this solution is indeed predicted by the term
$-a^{2}$ in $P\left(6_{1}\right)$.

\begin{rem}
	The HOMFLY polynomial of the Stevedore knot does not contain a constant term. However, this does \emph{not} mean that Fine's Conjecture predicts the \emph{non-existence} of an embedded minimal disc bounding $6_1$. Rather, Fine's Conjecture predicts that generically there is an \emph{even} number of solutions with self-intersection number $0$, whose signs cancel out. In particular, it is plausible that appropriately arranged Stevedore knots in $S^3$ could bound a p-embedded minimal disc in $\HH^4$. In \cite{HassSliceRibbon}, Hass proved that every \emph{ribbon}\footnote{A knot $K \subset S^3$ is \emph{ribbon} if there exists an embedded disc $M \subset B^4$ meeting the boundary at $K$, for which the function $f : B^4 \to \mathbb R$ given by $\mathbf x \mapsto ||\mathbf x||^2$ is a Morse function with no interior local maxima. It is known that all ribbon knots are slice; whether the converse is true or not is a famous open problem, known as the \emph{slice-ribbon conjecture}.} knot in $S^3$ is isotopic to a knot bounding an embedded minimal disc in $B^4$, the unit $4$-ball equipped with the Euclidean metric. This holds in particular for the Stevedore knot, which is known to be ribbon. It would be interesting to investigate whether the analogue of Hass' Theorem holds in $\HH^4$. In any case, such a result would not contradict Fine's Conjecture by the discussion above.
\end{rem}

\subsubsection*{The square knot}

We conclude this section by describing the only non-prime example
considered in this paper. The \emph{square knot} is the connected
sum of a right-handed trefoil $3_{1}$ and a left-handed trefoil $3_{1}^{*}$
(cf. Figure \ref{fig:diagram_square}). A parametrisation of this
knot is 
\[
\gamma_{\mathrm{square}}\left(\theta\right)=\left(\begin{matrix}\cos\left(3\theta+0.7\right)\\
\cos\left(5\theta+1\right)\\
\cos\left(7\theta\right)
\end{matrix}\right).
\]
Its HOMFLY polynomial is the product $P\left(3_{1}\right)P\left(3_{1}^{*}\right)$,
namely 
\[
P\left(3_{1}\#3_{1}^{*}\right)=\left(-2a^{-2}+5-2a^{2}\right)+\left(-a^{-2}+4-a^{2}\right)z^2+z^{4}.
\]
It is known that every knot obtained as the connected sum of a knot
and its mirror image must be both slice and achiral. According to
Fine's Conjecture, the presence of the constant term $5$ does predict
the existence of minimal immersed discs with self-intersection number
$0$. Using our model, we are able to find an example confirming this
prediction. However, this solution is not embedded; rather, it has
two positive double points and two negative double points (cf. Figure
\ref{fig:square}).

\begin{table}[ht]
\centering \resizebox{\textwidth}{!}{%
\begin{tabular}{|c|c|c|c|c|c|}
\hline 
\textbf{Knot} & \makecell{\textbf{Genus} \\ \textbf{index}} & \makecell{\textbf{Crossing} \\ \textbf{number}} & \makecell{\textbf{HOMFLY} \\ \textbf{term}} & \makecell{\textbf{Self-intersection} \\ \textbf{number}} & \makecell{\textbf{MC error $\pm$ std} \textbf{(MC max)}} \tabularnewline
\hline
unknot & $0$ & $0$ & $1$ & $0$ & $4.41\cdot10^{-7}\pm6.16\cdot10^{-9}\ (4.61\cdot10^{-7})$ \tabularnewline
\hline
$3_{1}$ & $1$ & $3$ & $2a^{2}$ & $1$ & $4.83\cdot10^{-6}\pm5.81\cdot10^{-8}\ (5.01\cdot10^{-6})$ \tabularnewline
\hline
$5_{1}$ & $2$ & $5$ & $3a^{4}$ & $2$ & $6.76\cdot10^{-6}\pm7.39\cdot10^{-8}\ (7.00\cdot10^{-6})$ \tabularnewline
\hline
$8_{19}$ & $3$ & $8$ & $5a^{6}$ & $3$ & $1.95\cdot10^{-5}\pm2.69\cdot10^{-7}\ (2.05\cdot10^{-5})$ \tabularnewline
\hline
unperturbed $8_{19}$ & $3$ & $8$ & $5a^{6}$ & triple point & $6.12\cdot10^{-5}\pm7.39\cdot10^{-7}\ (6.36\cdot10^{-5})$ \tabularnewline
\hline
$10_{124}$ & $4$ & $10$ & $7a^{8}$ & $4$ & $5.01\cdot10^{-4}\pm4.60\cdot10^{-6}\ (5.17\cdot10^{-4})$ \tabularnewline
\hline
$4_{1}$ & $1$ & $4$ & $a^{2}$ & $1$ & $5.14\cdot10^{-6}\pm5.87\cdot10^{-8}\ (5.33\cdot10^{-6})$ \tabularnewline
\hline
$4_{1}^{*} \,\, (\sim 4_1)$ & $1$ & $4$ & $a^{-2}$ & $-1$ & $2.47\cdot10^{-6}\pm3.06\cdot10^{-8}\ (2.58\cdot10^{-6})$ \tabularnewline
\hline
$5_{2}$ & $1$ & $5$ & $-a^{6}$ & $3$ & $1.53\cdot10^{-5}\pm9.34\cdot10^{-7}\ (1.98\cdot10^{-5})$ \tabularnewline
\hline
$5_{2}^{*}$ & $1$ & $5$ & $-a^{-6}$ & $-3$ & $4.97\cdot10^{-5}\pm1.99\cdot10^{-6}\ (5.66\cdot10^{-5})$ \tabularnewline
\hline
$6_{1}$ & $0$ & $6$ & $-a^{2}$ & $1$ & $4.62\cdot10^{-6}\pm8.47\cdot10^{-8}\ (4.87\cdot10^{-6})$ \tabularnewline
\hline
$6_{1}^{*}$ & $0$ & $6$ & $-a^{-2}$ & $-1$ & $8.52\cdot10^{-5}\pm1.75\cdot10^{-6}\ (9.12\cdot10^{-5})$ \tabularnewline
\hline
$3_{1}\#3_{1}^{*}$ & $0$ & $6$ & $5$ & $0$ & $2.32\cdot10^{-5}\pm7.60\cdot10^{-7}\ (2.59\cdot10^{-5})$ \tabularnewline
\hline
\end{tabular}} \caption{Summary of the knots tested in this paper. The \textquotedblleft Genus
Index\textquotedblright{} column reports the smooth slice genus of
the knot; the \textquotedblleft Self-intersection number\textquotedblright{}
column reports the signed count of double points found by our algorithm. The Monte Carlo (MC) results are obtained by considering $1000$ uniformly distributed samples of size $2^{14}$.}
\label{tab:summary} 
\end{table}

\section{\label{sec:Future_developments}Future developments}

Higher genus surfaces, which were not discussed in this paper, are
a natural subject for further study. The main modification needed
for such an extension is purely computational; while the mathematical
model is perfectly well-suited for higher genus surfaces, parametrising
these surfaces with non-trivial topology requires a more elaborate
implementation. Another possible way to improve our results could involve developing a systematic way to parametrise a knot in $\mathbb{R}^{3}$
starting from a knot diagram; indeed, mainly for simplicity, we limited
our examples to knots with known explicit parametrisations, while
a more systematic method to generate parametrised knots could lead
to a more extensive testing of Fine's Conjecture.

Finally, a natural and very promising continuation of this work consists
of upgrading the near-minimal surfaces we obtained to genuinely minimal
ones. In view of the accuracy and stability of the approximations
produced in this paper, using the simplest MLP architecture and training
for roughly an hour on standard CPUs, lowering the error to values
suitable for numerical verification appears to be well within reach;
indeed, the present computations suggest that a substantial improvement
in precision should be feasible with an upgraded implementation and
increased computational resources. Such a result would allow us to turn
the present numerical evidence into rigorous existence results. These
methods have been recently applied to verify numerical solutions to
a range of PDEs, with neural networks playing a key role in some cases
\cite{gomez2019computer,wang2025discoveryunstablesingularities}.
Analogous pipelines, involving a substantial machine learning component,
have also appeared in the context of geometric analysis \cite{cortes2026machinelearningapproachnirenberg,platt2026nonuniquenesssymmetriesnirenbergproblem};
it is the authors' intention to pursue a similar direction building
on the current results. Specifically, the first natural steps to approach
the level of precision needed for computer-assisted proofs consist
of trying deeper networks and more suitable second-order optimisers,
as it is well-documented in the PINNs literature \cite{gomez2019computer}.

\bibliographystyle{plain}
\bibliography{Bibliography}

\end{document}